\documentclass[reqno,a4paper,12pt]{amsart}

\usepackage{mystyle}
\usepackage{etoolbox}

\usepackage[utf8]{inputenc}
\usepackage[T1]{fontenc}
\usepackage{lmodern}
\usepackage[british]{babel}
\usepackage[nopatch=eqnum]{microtype}
\usepackage[headings]{fullpage}
\usepackage{parskip}

\usepackage[pdfencoding=auto,pdfusetitle]{hyperref}
\hypersetup{colorlinks=true, linkcolor=blue!30!black, citecolor=green!30!black, urlcolor=red!45!black}

\usepackage{comment}

\usepackage{amsfonts,amssymb,bbm,stmaryrd}
\usepackage[scr=boondoxo,scrscaled=1.05]{mathalfa}
\usepackage{amsmath}

\usepackage{mathtools,thmtools}
\hypersetup{hypertexnames=false}
\usepackage[noabbrev,capitalise]{cleveref}
\crefformat{equation}{(#2#1#3)}
\crefformat{enumi}{(#2#1#3)}
\crefformat{enumii}{(#2#1#3)}

\usepackage{centernot,float,subcaption,url,array,booktabs,colortbl}

\usepackage[shortlabels]{enumitem}
\setlist[enumerate,1]{label={\roman*)}}

\usepackage{tikz}
\usepackage{tikz-cd}
\usetikzlibrary{spath3,intersections,calc}
\usetikzlibrary{decorations.pathmorphing}
\usetikzlibrary{matrix,positioning}
\usetikzlibrary{shapes.geometric}
\usetikzlibrary{arrows}
\tikzset{> =stealth}

\usepackage{xparse}
\usepackage{adjustbox}
\usepackage{multirow,array,longtable}
\usepackage{placeins,xspace,fix-cm}
\usepackage{accents}
\usepackage[nomessages]{fp}

\newcolumntype{F}{>{$}c<{\hspace{-0.9ex}$}}
\newcolumntype{:}{>{$}m{0.8ex}<{$}}
\newcolumntype{R}{>{$}r<{$}}
\newcolumntype{C}{>{$}c<{$}}
\newcolumntype{L}{>{$}l<{$}}
\newcolumntype{N}{@{}>{$}l<{$}}
\newlength\horspace
\newcommand{\h}[1][1.0]{\hspace*{#1\horspace}}
\newlength\verspace
\renewcommand{\v}[1][1.0]{\vspace*{#1\verspace}\xspace}
\tikzset{iso/.style={draw=none,every to/.append style={edge node={node [sloped, allow upside down, auto=false]{$\cong$}}}}}
\tikzset{adj/.style={draw=none,every to/.append style={edge node={node [sloped, allow upside down, auto=false]{$\dashv$}}}}}
\tikzset{simeq/.style={draw=none,every to/.append style={edge node={node [sloped, allow upside down, auto=false]{$\simeq$}}}}}
\tikzset{simeqS/.style={draw=none,every to/.append style={edge node={node [sloped, allow upside down, auto=false]{$\raisebox{0.8em}{$\simeq$}$}}}}}
\tikzset{aiso/.style={simeqS,preaction={draw,->}}}
\tikzset{RightA/.style={double distance=3.5pt,>={Implies},->},%
	triple/.style={-,preaction={draw,RightA}},%
	quadruple/.style={preaction={draw,RightA,shorten >=0pt},shorten >=1pt,-,double,double distance=0.2pt}}
\tikzset{curve/.style={settings={#1},to path={(\tikztostart)
    .. controls ($(\tikztostart)!\pv{pos}!(\tikztotarget)!\pv{height}!270:(\tikztotarget)$)
    and ($(\tikztostart)!1-\pv{pos}!(\tikztotarget)!\pv{height}!270:(\tikztotarget)$)
    .. (\tikztotarget)\tikztonodes}},
    settings/.code={\tikzset{quiver/.cd,#1}
        \def\pv##1{\pgfkeysvalueof{/tikz/quiver/##1}}},
    quiver/.cd,pos/.initial=0.35,height/.initial=0}

\newcommand{\addQEDstyle}[2]{\AtBeginEnvironment{#1}{\pushQED{\qed}\renewcommand{\qedsymbol}{#2}}\AtEndEnvironment{#1}{\popQED}}

\theoremstyle{plain}
\newtheorem{theorem}{Theorem}[section]
\newtheorem{lemma}[theorem]{Lemma}
\newtheorem{proposition}[theorem]{Proposition}
\newtheorem{corollary}[theorem]{Corollary}

\theoremstyle{definition}
\newtheorem{definition}[theorem]{Definition}

\theoremstyle{remark}
\newtheorem{remark}[theorem]{Remark}
\newtheorem{cons}[theorem]{Construction}

\addQEDstyle{example}{$\triangle$}

\def\nameit#1{\textrm{#1}~}
\def\thex{\nameit{Theorem}}
\def\prox{\nameit{Proposition}}

\def\defx{\nameit{Definition}}
\def\remx{\nameit{Remark}}
\def\consx{\nameit{Construction}}

\def\dfn#1{{\itshape #1}}

\NewDocumentEnvironment{cd}{s O{5.5} O{5.5} b}{%
	\IfBooleanF{#1}{\begin{equation*}}\begin{tikzcd}[row sep=#2ex,column sep=#3ex,ampersand replacement=\&]
			#4
		\end{tikzcd}\IfBooleanF{#1}{\end{equation*}}\ignorespacesafterend}{}

\newenvironment{fun}{\[\begin{tabular}{F:RCL}}{\end{tabular}\]\ignorespacesafterend}

\newenvironment{eqD*}{\begin{equation*}}{\end{equation*}\ignorespacesafterend}

\renewcommand{\epsilon}{\varepsilon}
\renewcommand{\phi}{\varphi}

\mathchardef\mhyphen="2D

\newcommand{\Z}{\mathbb{Z}}

\newcommand{\id}{\mathrm{id}}
\newcommand{\invv}{i}

\newcommand{\op}{{^\mathrm{\hspace{0.5pt}op}}}

\def\:{\colon}

\newcommand{\p}[1]{\textup{(}{#1}\kern2pt\textup{)}}

\def\c{\circ}
\newcommand{\iso}{\cong}

\newcommand{\scaleu}[2][1.2]{{\scalebox{#1}{$#2$}}}

\newcommand{\ov}[1]{\overline{#1}}

\newcommand{\stb}{_{\ast}}

\DeclareFontFamily{OT1}{pzc}{}
\DeclareFontShape{OT1}{pzc}{m}{it}{<->s*[1.19]pzcmi7t}{}
\DeclareMathAlphabet{\mathpzc}{OT1}{pzc}{m}{it}

\DeclareFontFamily{U}{dutchcal}{\skewchar\font=45}
\DeclareFontShape{U}{dutchcal}{m}{n}{<->s*[1.05] dutchcal-r}{}
\DeclareMathAlphabet{\mathlcal}{U}{dutchcal}{m}{n}
\newcommand{\catfont}[1]{\ensuremath{\mathpzc{#1}}\xspace}

\newcommand{\A}{\catfont{A}}
\newcommand{\B}{\catfont{B}}
\newcommand{\C}{\catfont{C}}
\newcommand{\D}{\catfont{D}}
\newcommand{\E}{\catfont{E}}
\newcommand{\M}{\catfont{M}}

\newcommand{\K}{\catfont{K}}
\newcommand{\X}{\catfont{X}}
\newcommand{\Y}{\ensuremath{\mathcal{Y}}\xspace}
\newcommand{\1}{\catfont{1}}

\newcommand{\Set}{\catfont{Set}}
\newcommand{\Grp}{\catfont{Grp}}
\newcommand{\Ab}{\catfont{Ab}}
\newcommand{\Mon}{\catfont{Mon}}
\newcommand{\Cliff}{\catfont{Cliff}\!}

\newcommand{\CInvMon}{\catfont{CInvMon}}
\newcommand{\SLat}{\catfont{SLat}}

\newcommand{\Cat}{\catfont{Cat}}
\newcommand{\ICat}{\catfont{ICat}}
\newcommand{\OpICat}{\catfont{OpICat}}
\newcommand{\ISet}{\catfont{ISet}}
\newcommand{\SymMonCat}
{\catfont{SymMonCat}}
\newcommand{\MonCat}
{\catfont{MonCat}}
\newcommand{\CartMonCat}
{\catfont{CartMonCat}}
\newcommand{\MonCatlax}
{\catfont{MonCat}_{\lax}}
\newcommand{\TwoCatps}{\catfont{2Cat}_{\ps}}
\newcommand{\MonTwoCat}{\catfont{Mon2Cat}}
\newcommand{\MonTwoCatps}{\catfont{Mon2Cat}_{\ps}}
\newcommand{\Bicat}{\catfont{Bicat}}
\DeclareMathOperator{\Monf}{Mon}
\DeclareMathOperator{\Grpf}{Grp}
\DeclareMathOperator{\Abf}{Ab}
\DeclareMathOperator{\PsMon}{PsMon}
\newcommand{\twoCat}{2\h[2]\mbox{-}\Cat}
\NewDocumentCommand{\Fib}{t' t" t+ t? O{n} O{n} o}{
	\ensuremath{\ifx#5t{2\mbox{-}\Set\mbox{-}}\fi\catfont{\ifx#5d{D}\else{\ifx#5o{Op}\else{\ifx#5b{DOp}\else{\ifx#5t{Op}\else{\ifx#5c{Cl\h[-3]}\else{\ifx#5s{Sp}\fi}\fi}\fi}\fi}\fi}\fi{Fib}}\IfBooleanT{#3}{^{\h[3.7]\opn{s}\h[-3.2]}}\IfBooleanT{#4}{^{\h[3.7]\opn{P}\h[-3,7]}}{\IfBooleanTF{#1}{_{\h[0.4]\opn{cart}\ifx#6n{}\else{\h[-1.4],\h[0.4]{#6}}\fi}}{\IfBooleanTF{#2}{_{\h[0.4]\opn{clov}\ifx#6n{}\else{\h[-1.4],\h[0.4]{#6}}\fi}}{\ifx#6n{}\else{_{\h[0.4]{#6}}}\fi}}}\IfNoValueF{#7}{\h[-1]\left({#7}\right)}}
}

\NewDocumentCommand{\Sh}{o m}{
	\ensuremath{\catfont{Sh}\hspace{-0.15ex}\left({#2}\IfNoValueF{#1}{,{#1}}\right)}
}

\NewDocumentCommand{\Alg}{t+ O{n} m}{
	#3\mbox{-}\catfont{\IfBooleanT{#1}{Ps}Alg}\ifx#2l{_{\lax}}\else{\ifx#2p{_{\ps}}\else{\ifx#2o{_{\oplax}}\fi}\fi}\fi}

\newcommand{\Lan}[1]{\operatorname{Lan}_{\hspace{0.2ex}#1}}

\newcommand{\opn}[1]{\operatorname{#1}}

\newcommand{\HomC}[3]{{#1}\!\left({#2},\h[1]{#3}\right)}

\newcommand{\x}[1][]{\h[-1]\times_{#1}\h[-1]}
\newcommand{\xf}{\otimes_F}
\newcommand{\xgf}{\otimes_{\Groth{F}}}
\newcommand{\xfm}{\otimes_{F(M)}}
\newcommand{\restr}[2]{{\left.\kern-\nulldelimiterspace {#1}\vphantom{\big|} \right|_{#2}}}

\newcommand{\cod}{\operatorname{cod}}

\DeclareMathOperator{\lax}{lax}
\DeclareMathOperator{\oplax}{oplax}

\DeclareMathOperator{\ps}{ps}

\newcommand{\Int}[1]{\ensuremath{\int \hspace{-0.35ex} #1}}
\newcommand{\Intdiag}[1]{\ensuremath{\scaleu{\int} \hspace{-0.15ex} #1}}

\newcommand{\Groth}[1]{\Int{{}#1}}
\newcommand{\Grothdiag}[1]{\Intdiag{#1}}

\newcommand{\groth}[1]{\mathcal{G}\mkern-1.4mu\left(#1\right)}

\newcommand{\too}{\longrightarrow}
\newcommand{\mto}{\mapsto}

\makeatletter
\newcommand{\ar}[2][]{\xrightarrow[#1]{#2}}
\newcommand{\aar}[2][]{\xrightarrow[#1]{#2}}
\def\xlongrightarrowfill@{\arrowfill@\relbar\relbar\longrightarrow}
\newcommand{\arr}[2][]{%
	\ext@arrow 0099\xlongrightarrowfill@{#1}{#2}}
\newcommand{\aarr}[2][]{%
	\ext@arrow 0099\xlongrightarrowfill@{#1}{#2}}
\def\xprorightarrowfill@{\arrowfill@{\relbar\joinrel\raisebox{0.6pt}{\small$\shortmid$}\joinrel{\relbar}}\relbar\rightarrow}
\newcommand{\aproarrow}[2][]{%
	\ext@arrow 0099\xprorightarrowfill@{#1}{#2}}

\newcommand{\aR}[2][]{%
	\ext@arrow 0055{\Rightarrowfill@}{#1}{#2}}
\def\xLongrightarrowfill@{\arrowfill@\Relbar\Relbar\Longrightarrow}
\newcommand{\aRR}[2][]{%
	\ext@arrow 0099\xLongrightarrowfill@{#1}{#2}}
\def\aitofill@{\arrowfill@{\lhook\joinrel\relbar}\relbar\rightarrow}
\newcommand{\aito}[2][]{%
	\ext@arrow 3095\aitofill@{#1}{#2}}
\def\Longaitofill@{\arrowfill@{\lhook\joinrel\relbar\joinrel\relbar}\relbar\rightarrow}
\newcommand{\aitoo}[2][]{%
	\ext@arrow 0099\Longaitofill@{#1}{#2}}

\def\xlongleftarrowfill@{\arrowfill@\longleftarrow\relbar\relbar}
\newcommand{\all}[2][]{%
	\ext@arrow 0099\xlongleftarrowfill@{#1}{#2}}
\newcommand{\aL}[2][]{%
	\ext@arrow 0055{\Leftarrowfill@}{#1}{#2}}
\def\xLongleftarrowfill@{\arrowfill@\Longleftarrow\Relbar\Relbar}
\newcommand{\aLL}[2][]{%
	\ext@arrow 0099\xLongleftarrowfill@{#1}{#2}}
\def\xmapstofill@{\arrowfill@{\mapstochar\relbar}\relbar\rightarrow}
\newcommand{\am}[2][]{%
	\ext@arrow 0395\xmapstofill@{#1}{#2}}
\def\xlongmapstofill@{\arrowfill@\relbar\relbar\longmapsto}
\newcommand{\amm}[2][]{%
	\ext@arrow 0399\xlongmapstofill@{#1}{#2}}

\newcommand{\eqq}{\DOTSB\protect\Relbar\protect\joinrel\Relbar}
\def\xeqqfill@{\arrowfill@\Relbar\Relbar\eqq}
\newcommand{\aeqq}[2][]{%
	\ext@arrow 0099\xeqqfill@{#1}{#2}}
\def\xRrightarrowfill@{\arrowfill@\equiv\equiv\Rrightarrow}
\newcommand{\aM}[2][]{\ext@arrow 0359\xRrightarrowfill@{#1}{#2}}
\newcommand{\Llongrightarrow}{%
	\DOTSB\protect\equiv\protect\joinrel\Rrightarrow}
\def\xLlongrightarrowfill@{\arrowfill@\equiv\equiv\Llongrightarrow}
\newcommand{\aMM}[2][]{%
	\ext@arrow 0099\xLlongrightarrowfill@{#1}{#2}}
\makeatother

\newcommand{\aequi}{\ensuremath{\stackrel{\raisebox{-1ex}{\kern-.3ex$\scriptstyle\sim$}}{\rightarrow}}}
\newcommand{\aequii}{\ensuremath{\stackrel{\raisebox{-1ex}{\kern-.3ex$\scriptstyle\sim$}}{\longrightarrow}}}

\newcommand{\PB}[1]{\arrow[#1,phantom,"\scalebox{1.6}{\color{black}$\lrcorner$}",very near start]}
\newcommand{\Ar}[4][]{\arrow[#2,"{#3}"{#1},""{name=#4, anchor=center}]}
\newcommand{\Ars}[4][]{\arrow[#2,"{#3}"'{#1},""{name=#4, anchor=center}]}
\newcommand{\Arb}[6][]{\arrow[#2,"{#3}"{#1},from=#4,to=#5,shorten <= #6 em, shorten >= #6 em]}
\newcommand{\Arbs}[6][]{\arrow[#2,"{#3}"'{#1},from=#4,to=#5,shorten <= #6 em, shorten >= #6 em]}

\NewDocumentCommand{\fib}{O{n} O{2.3} mmm}{%
	\begin{cd}*[#2][5]
		{#3}\ifx#1n{\arrow[d,"{\,\scaleu{#4}}"]}\else{\ifx#1i{\arrow[d,hookrightarrow,"{\,\scaleu{#4}}"]}\else{\ifx#1e{\arrow[d,equal,"{\,\scaleu{#4}}"]}\else{\ifx#1R{\arrow[d,Rightarrow,"{\,\scaleu{#4}}"]}\fi}\fi}\fi}\fi\\
		{#5}\ifx#1o{\arrow[u,"{\,\scaleu{#4}}"']}\fi
	\end{cd}\xspace
}
\NewDocumentCommand{\fibdiag}{O{n} O{2.3} mmm}{%
	\begin{cd}*[#2][5]
		{#3}\ifx#1n{\arrow[d,"{\,{#4}}"]}\else{\ifx#1i{\arrow[d,hookrightarrow,"{\,{#4}}"]}\else{\ifx#1e{\arrow[d,equal,"{\,{#4}}"]}\else{\ifx#1R{\arrow[d,Rightarrow,"{\,{#4}}"]}\fi}\fi}\fi}\fi\\
		{#5}\ifx#1o{\arrow[u,"{\,{#4}}"']}\fi
	\end{cd}\xspace
}
\NewDocumentCommand{\sq}{s O{n} O{7} O{7} O{} O{2.7} O{2.2} O{0.5} O{n}}{%
	\def\foosq##1##2##3##4##5##6##7##8{%
		\IfBooleanTF{#1}{\begin{cd}*}{\begin{cd}}[#3][#4]
				{##1}\ifx#2p{\PB{rd}}\fi\arrow[r,"{##5}"]\ifx#9l{\arrow[d,equal,"{##6}"']}\else{\arrow[d,"{##6}"']}\fi\&{##2}\ifx#9r{\arrow[d,equal,"{##7}"]}\else{\arrow[d,"{##7}"]}\fi\ifx#2l{\arrow[ld,Rightarrow,shorten <=#6ex,shorten >=#7ex,"{#5}"{pos=#8}]}\fi\\
				{##3}\ifx#9d{\arrow[r,equal,"{##8}"']}\else{\arrow[r,"{##8}"']}\fi\ifx#2o{\arrow[ur,Rightarrow,shorten <=#6ex,shorten >=#7ex,"{#5}"{pos=#8}]}\fi\&{##4}
		\end{cd}}%
		\foosq}
\newcommand{\fibsq}[8][6]{%
	\begin{cd}[5.85][#1]
		#2 \arrow[r,"{#6}"] \arrow[d,mapsto,"{#7}"'] \& #3 \arrow[d,mapsto,"{#7}"] \\
		#4 \arrow[r,"{#8}"'] \& #5
\end{cd}}

\newcommand{\fibsquniv}[8][6]{%
	\def\foofibsquniv##1##2##3##4##5{%
		\begin{cd}[3][#1]
			\& \&[-3ex] ##1 \arrow[dd,mapsto,"{#7}"]\\[-4.5ex]
			#2 \arrow[rru,bend left,"{##3}"]\arrow[r,"{#6}"] \arrow[dd,mapsto,"{#7}"'] \& #3\arrow[ru,dashed,"{##4}"] \arrow[dd,mapsto,"{#7}"] \\
			\& \& ##2 \\[-4.5ex]
			#4 \arrow[r,"{#8}"'] \& #5 \arrow[ru,"{##5}"']
			\end{cd}}%
	\foofibsquniv%
}
\NewDocumentCommand{\nat}{s O{n} O{7} O{7} O{2.7} O{2.2} O{0.5} O{n}}{%
	\def\foonat##1##2##3##4##5##6{%
		\IfBooleanTF{#1}{\sq*}{\sq}[#2][#3][#4][{##1}_{##4}][#5][#6][#7][#8]{{##2}\ifx#8l{}\else{({##5})}\fi}{{##3}\ifx#8r{}\else{({##5})}\fi}{{##2}\ifx#8l{}\else{({##6})}\fi}{{##3}\ifx#8r{}\else{({##6})}\fi}{{##1}_{##5}}{\ifx#8l{}\else{{##2}({##4})}\fi}{\ifx#8r{}\else{{##3}({##4})}\fi}{{##1}_{##6}}}%
	\foonat}
\NewDocumentCommand{\tr}{s O{4.5} O{6.5} O{0} O{0} O{n} O{0} O{} O{0}}{%
	\def\footr##1##2##3##4##5##6{%
		\IfBooleanTF{#1}{\begin{cd}*}{\begin{cd}}[#3][#2]
				{##1}\arrow[rr,"{##4}"]
				\Ars[inner sep =0.2ex]{dr}{##5}{A}\&[#4ex]\&[#5ex]{##2}\Ar[inner sep =0.2ex]{ld}{##6}{B}\\
				\&{##3}
				\ifx#6l{\Arb{Rightarrow,shift right=#7em}{#8}{A}{B}{#9}}\else{\ifx#6o{\Arbs{Rightarrow,shift right=#7em}{#8}{B}{A}{#9}}\else{\ifx#6i{\Arbs[inner sep=0.9ex]{iso,shift right=#7em}{#8}{A}{B}{#9}}\else{\ifx#6e{\Arb{equal,shift right=#7em}{#8}{A}{B}{#9}}\else{}\fi}\fi}\fi}\fi
		\end{cd}}%
		\footr}

\NewDocumentCommand{\tc}{s t+ O{7} O{30} O{} O{} O{} o}{
	\def\footc##1##2##3##4##5{%
		\FPmul\Mulresulttwo{#3}{#3}%
		\FPmul\Mulresult{0.0026}{\Mulresulttwo}%
		\IfBooleanTF{#1}{\begin{cd}*}{\begin{cd}}[#3][#3]
				{##1}\Ar[#5]{r,bend left=#4}{##3}{A}\Ars[#6]{r,bend right=#4}{##4}{B}\&{##2}
				\IfBooleanTF{#2}{\Arb[description,pos=0.49]}{\Arb}{Rightarrow #7}{\mkern1mu {##5}}{A}{B}{\IfNoValueTF{#8}{\Mulresult}{#8}}
		\end{cd}}%
		\footc}
\NewDocumentCommand{\tcwl}{s t+ O{7} O{30} O{} O{} O{} o O{-2}}{
	\def\footcwl##1##2##3##4##5##6##7{%
		\FPmul\Mulresulttwo{#3}{#3}%
		\FPmul\Mulresult{0.0026}{\Mulresulttwo}%
		\IfBooleanTF{#1}{\begin{cd}*}{\begin{cd}}[#3][#3]
				##6 \arrow[r,"{##7}"]\&[#9ex]{##1}\Ar[#5]{r,bend left=#4}{##3}{A}\Ars[#6]{r,bend right=#4}{##4}{B}\&{##2}\IfBooleanTF{#2}{\Arb[description,pos=0.49]}{\Arb}{Rightarrow #7}{\mkern1mu {##5}}{A}{B}{\IfNoValueTF{#8}{\Mulresult}{#8}}
		\end{cd}}%
		\footcwl}
\NewDocumentCommand{\tcwr}{s t+ O{7} O{30} O{} O{} O{} o O{-2}}{
	\def\footcwr##1##2##3##4##5##6##7{%
		\FPmul\Mulresulttwo{#3}{#3}%
		\FPmul\Mulresult{0.0026}{\Mulresulttwo}%
		\IfBooleanTF{#1}{\begin{cd}*}{\begin{cd}}[#3][#3]
				{##1}\Ar[#5]{r,bend left=#4}{##3}{A}\Ars[#6]{r,bend right=#4}{##4}{B}\&{##2}\arrow[r,"{##7}"]\&[#9ex]##6\IfBooleanTF{#2}{\Arb[description,pos=0.49]}{\Arb}{Rightarrow #7}{\mkern1mu {##5}}{A}{B}{\IfNoValueTF{#8}{\Mulresult}{#8}}
		\end{cd}}%
		\footcwr}
\NewDocumentCommand{\tcv}{s t' O{7} O{30} mmmmm}{
	\FPmul\Mulresulttwo{#3}{#3}%
	\FPmul\Mulresult{0.0026}{\Mulresulttwo}%
	\IfBooleanTF{#1}{\begin{cd}*}{\begin{cd}}[#3][#3]
			{#5}\IfBooleanTF{#2}{\Ars{d,leftarrow,bend right=#4}{#7}{A}\Ar{d,leftarrow,bend left=#4}{#8}{B}}{\Ars{d,bend right=#4}{#7}{A}\Ar{d,bend left=#4}{#8}{B}}\\{#6}
			\Arb{Rightarrow}{#9}{A}{B}{\Mulresult}
		\end{cd}}

\title{Clifford semigroups and the monoidal Grothendieck construction}

\author[E. Caviglia]{E.\ Caviglia}
\address{Department of Mathematical Sciences, Stellenbosch University, South Africa
\newline
National Institute for Theoretical and Computational Sciences, Stellenbosch, South Africa}
\email{elena.caviglia@outlook.com}

\author[P. F. Faul]{P.\ F.\ Faul}
\address{Department of Mathematical Sciences, Stellenbosch University, South Africa
\newline
National Institute for Theoretical and Computational Sciences, Stellenbosch, South Africa}
\email{peter@faul.io}

\author[G. Manuell]{G.\ Manuell}
\address{Department of Mathematical Sciences, Stellenbosch University, South Africa
\newline
National Institute for Theoretical and Computational Sciences, Stellenbosch, South Africa}
\email{graham@manuell.me}

\author[L. Mesiti]{L.\ Mesiti}
\address{Department of Mathematical Sciences, Stellenbosch University, South Africa}
\email{luca.mesiti@outlook.com}

\date{August 2026}

\subjclass[2020]{20M50, 18D30, 20M18, 16Y60, 18M05}
\keywords{inverse monoid, rig, fibred category, indexed category}

\begin{document}

\begin{abstract}
    We show that the monoidal Grothendieck construction can be applied to recover the known structure theorem for Clifford monoids, which states that they correspond to functors from a semilattice into the category of groups. Furthermore, we capture the category of Clifford monoids itself as a Grothendieck construction of the functor sending a semilattice $L$ to the functor category $\HomC{\Cat}{L\op}{\Grp}$ and use this to construct a number of factorisation systems on this category. Finally, we prove a general result on taking monoids in a monoidal fibration and apply it to establish a correspondence between inverse semirings and lax monoidal functors from an idempotent semiring into the category of abelian groups.
\end{abstract}

\maketitle
\thispagestyle{empty}

\section{Introduction}\label{sec:introduction}

The Grothendieck construction exhibits one of the most fundamental equivalences in category theory. In its base form it gives a correspondence between Grothendieck fibrations and indexed categories, but it has subsequently been expanded and generalised to broader settings. In this paper we will use one of these generalisations to clarify some aspects of the theory of Clifford monoids and prove a new structure theorem for inverse semirings.

A Clifford semigroup is a semigroup $M$ equipped with a unary involutive `inverse' operation $(-)^*$ such that $xx^*$ acts like an identity for $x$ in the sense that $xx^*x = x$ and commutes with every element of $M$. Equivalently, a Clifford semigroup is an inverse semigroup in which $xx^* = x^*x$ (see \cite[Chapter 5]{lawson1998inverse}) or an inverse semigroup in which idempotents are central. We will find it slightly more convenient to work with Clifford \emph{monoids}, though all of the results can be extended to Clifford semigroups without much effort.
We denote the category of Clifford monoids and monoid homomorphisms by $\Cliff$. (Note that any monoid homomorphism between Clifford monoids automatically preserves inverses.)

If $x$ is an element of a Clifford monoid $M$, then $xx^*$ is an idempotent. We write $e_x = xx^*$. The idempotents of $M$ form a meet-semilattice under multiplication called $E(M)$. The construction extends to a functor in the obvious way.
Clifford monoids have a natural order structure, distinct from the usual divisibility order on monoids. We say $x \le y$ if there exists an idempotent $e$ such that $x = ye$. In fact, $e$ may always be taken to be $e_x$. This order agrees with the natural order on the idempotents.
The elements of $M$ associated to a given idempotent are closed under multiplication and actually form a group with unit $e$. We call this group $G_e$.

The structure of Clifford monoids is well-understood \cite{clifford1941semigroups}. Every Clifford monoid $M$ gives rise to a functor $G_M\colon E(M)\op \to \Grp$, where $E(M)$ is viewed as a category under its natural order, $G_M(e) = G_e$, and $G_M(e \le e')\colon g \mapsto g e$. Then $M$ may be recovered as the set $\{ (e,g) \mid e \in E(M), g \in G_M(e) \}$ with $(e,g) \cdot (e',g') = (e e', G_M(ee' \le e)(g) \cdot G_M(ee' \le e')(g'))$. Perhaps less well-known is that this correspondence extends to an equivalence of categories (see \cite{pasku2011clifford}). 

This reconstruction of the Clifford monoids as sets of pairs $(e,x)$ where $x \in G_M(e)$ is reminiscent of the Grothendieck construction. Since the Grothendieck construction takes in a (pseudo)functor and returns a category, some work must be done to make this connection precise. If we treat the groups $G_M(e)$ as discrete monoidal categories then the \emph{monoidal} Grothendieck construction (see \cite{MoeVas20}) yields a category whose objects are elements of $M$, whose morphisms are given by the well-known order structure of inverse semigroups and which has a monoidal product corresponding to the multiplication on $M$.

But this is not where the story ends. We will show that the category of all Clifford monoids described in \cite{pasku2011clifford} itself arises from a Grothendieck construction applied to a pseudofunctor sending a semilattice $L$ to the functor category $\HomC{\Cat}{L\op}{\Grp}$.

Recall that the Grothendieck construction relates pseudofunctors into $\Cat$ (or \emph{indexed categories}) and fibrations.
In fact, the resulting functor $E\colon \Cliff \to \SLat$ is both a fibration and opfibration. This fact can be leveraged to lift factorisation systems from $\SLat$ to $\Cliff$. In this way we obtain four factorisation systems for $\Cliff$, three of which have very natural interpretations in terms of known concepts in the study of inverse semigroups.

Finally, we turn our attention to inverse semirings. These are semirings which form a commutative inverse monoid under addition. These structures arise naturally for computational and topological reasons and an overview of their theory can be found in \cite{faul2026}. Just as Clifford monoids can be understood in terms of functors from semilattices into $\Grp$, we might seek a similar characterisation of inverse semirings. In fact, we will show that they correspond to lax monoidal functors from idempotent semirings into $\Ab$.

We reach this result by proving a general theorem on taking monoids in the monoidal Grothendieck construction. More precisely, we show that given a monoidal opfibration, the induced functor between the categories of monoids can itself be obtained by a Grothendieck construction of a pseudofunctor that takes monoids in the fibres. We then apply this theorem to commutative Clifford monoids, which are the same thing as commutative inverse monoids. Indeed, there is a natural tensor product on this category and the restriction of $E\colon \Cliff \to \SLat$ becomes a monoidal opfibration. And its presheaf counterpart becomes a (weakly) lax monoidal pseudofunctor from $\SLat$ to $\Cat$. 

Monoids with respect to this monoidal structure on the category $\CInvMon$ of commutative inverse monoids are none other than inverse semirings. We conclude that the functor $\Mon(\CInvMon) \to \Mon(\SLat)$ arises from a pseudofunctor which associates idempotent semirings $Q$ to the category of monoids in the functor category $\HomC{\Cat}{Q}{\Ab}$ with respect to Day convolution. And such monoids are simply lax monoidal functors from $Q$ to $\Ab$.

\section{Preliminaries}

In this section we give some background on the categorical concepts necessary for understanding this paper. For more information on 2-categories, see \cite{johnson20212d}.

We start by recalling the notions of monoidal category and monoidal functor.

\begin{definition}
A \emph{monoidal category} is a category $\C$ equipped with a bifunctor
$\otimes \colon \C \times \C \to \C$ (the tensor product), an object $I \in \C$
(the unit), and natural isomorphisms
\[
  \alpha_{X,Y,Z} \colon (X \otimes Y) \otimes Z \to X \otimes (Y \otimes Z),
  \quad
  \lambda_X \colon I \otimes X \to X,
  \quad
  \rho_X \colon X \otimes I \to X,
\]
subject to the pentagon and triangle coherence axioms (see \cite[Chapter~1]{johnson20212d}).

A monoidal category is said to be \emph{strict} if $\alpha$, $\lambda$, and $\rho$
are all identities. A monoidal category $\C$ is \emph{cartesian} if its monoidal
structure is given by finite products.
\end{definition}

\begin{definition}
Let $\C$ and $\D$ be monoidal categories. A \emph{lax monoidal functor}
$F \colon \C \to \D$ consists of a functor $F \colon \C \to \D$ together with a
morphism $\iota \colon I_{\D} \to F I_{\C}$ and a natural transformation
$\mu_{X,Y} \colon FX \otimes FY \to F(X \otimes Y)$, subject to the evident
associativity and unit coherence conditions (see \cite[Chapter 1]{johnson20212d}).

A \emph{strong monoidal functor} is a lax monoidal functor for which $\iota$ and
$\mu$ are isomorphisms. A \emph{strict monoidal functor} between strict monoidal
categories is a strong monoidal functor where $\iota$ and $\mu$ are identities.

A \emph{monoidal natural transformation} between lax monoidal functors
$F, G \colon \C \to \D$ is a natural transformation $\theta \colon F \Rightarrow G$
such that $\theta_{X \otimes Y} \circ \mu^F_{X,Y} = \mu^G_{X,Y} \circ (\theta_X \otimes \theta_Y)$
and $\theta_{I} \circ \iota^F = \iota^G$.
\end{definition}

\begin{definition}
    We write $\MonCat$ for the $2$-category whose objects are monoidal categories, whose $1$-morphisms are strong monoidal functors, and whose $2$-morphisms are monoidal natural transformations. We write $\MonCatlax$ for the similar $2$-category whose $1$-morphisms are lax monoidal functors.
\end{definition}

An \emph{internal monoid} in a monoidal category is given by an object $M$, a multiplication morphism $M\otimes M \ar{m} M$ and a unit morphism $I\ar{e} M$ such that the obvious associativity and unit diagrams commute.

An \emph{internal group} in a cartesian monoidal category is an internal monoid $(G,m,e)$ equipped with an inversion morphism $i\colon G \to G$ such that $m(i, \id_G) = e \circ ! = m(\id_G, i)$. An \emph{internal abelian group} is an internal group satisfying the commutativity condition $m = m (\pi_2,\pi_1)$ where $\pi_{1,2}\colon G \times G \to G$ are the product projections.

Note that the category $\CInvMon$ of commutative inverse monoids has a monoidal structure given by tensor product in the usual way. The unit is the free commutative inverse monoid on one generator $\Z_0$, which is the group $\Z$ with an additional identity adjoined. This monoidal structure restricts to the category $\Ab$ of abelian groups to give the familiar tensor product of $\Z$-modules and to the category $\SLat$ of semilattices.

Now we define some of the notions of fibration we will use in this paper.

\begin{definition}
    Let $P\colon \E \to \C$ be a functor. We say a morphism $g\colon E \to E'$ in $\E$ is \emph{opcartesian} with respect to $P$ if for every $E''\in \E$, every morphism $w\:P(E')\to P(E'')$ in $\C$ and every morphism $e\:E\to E''$ in $\E$ such that $P(e)=w\circ P(g)$, there exists a unique morphism $v\:E'\to E''$ such that $P(v)=w$ and $e = v\circ g$.
	\fibsquniv{E}{E'}{P(E)}{P(E')}{g}{P}{P(g)}{E''}{P(E'')}{e}{\exists! \h v}{w}
    
	A functor $P\: \E \to \C$ is called an \emph{opfibration} if for every object $E\in \E$ and every morphism $f\:P(E)\to C$ in $\C$, there is an opcartesian lift $\ov{f}^E\:E\to f\stb E$ of $f$ to $E$ as in the following diagram.
	\fibsq{E}{f\stb E}{P(E)}{C}{\ov{f}^E}{P}{f}

    A functor $P\: \E \to \C$ is a \emph{fibration} if $P\op\: \E\op \to \C\op$ is an opfibration.
	A \emph{discrete opfibration} $P\:\E\to \C$ is a functor where for every morphism $f\:P(E)\to C$ there is a unique morphism $\ov{f}^E$ such that $P(\ov{f}^E) = f$. Every discrete opfibration is an opfibration. Discrete fibrations are defined similarly.

    A \emph{morphism of (op)fibrations} from $P\: \E \to \B$ to $P'\: \E' \to \B'$ is a pair of functors $G\colon \E \to \E'$ and $F\colon \B \to \B'$ such that the obvious square commutes and $G$ preserves (op)cartesian morphisms. If $P$ and $P'$ are discrete fibrations the last condition is superfluous.

    A 2-morphism of fibrations from $(G_1,F_1)\: P \to P'$ to $(G_2,F_2)\: P \to P'$ is a pair of natural transformations $\tau\: G_1 \to G_2$ and $\sigma\: F_1 \to F_2$ such that $P'\tau = \sigma P$.

    We write $\Fib$, $\Fib[o]$, $\Fib[d]$ and $\Fib[b]$ for the 2-categories of fibrations, opfibrations, discrete fibrations and discrete opfibrations, respectively.
\end{definition}

Opfibrations are in correspondence with \emph{pseudofunctors} into $\Cat$ by the Grothendieck construction. Pseudofunctors are a weak notion of morphism between 2-categories (see \cite{johnson20212d}). We will primarily be concerned with pseudofunctors from 1-categories. 
\begin{definition}
Let $\C$ be a 1-category and let $\D$ be a 2-category. A \emph{pseudofunctor} $F \colon \C \to \D$ consists of an assignment $C \mapsto FC$ on objects, an assignment $f \mapsto Ff$ on morphisms respecting domains and codomains, an invertible 2-morphism
$\iota_C \colon \id_{FC} \to F(\id_C)$ for each object $C$, and an invertible
2-morphism $\mu_{g,f} \colon Fg \circ Ff \to F(g \circ f)$ for each composable pair $f \colon C \to C'$, $g \colon C' \to C''$, subject to associativity and unit coherence axioms.
Note that if the 2-cells $\iota_C$ and $\mu_{g,f}$ are identities the definition reduces to that of a functor.

A \emph{pseudonatural transformation} $\tau \colon F \to G$ between such pseudofunctors consists of a 1-morphism $\tau_C \colon FC \to GC$ for each object $C \in \C$ and an invertible 2-morphism
$\tau_f \: G f \circ \tau_C \to \tau_{C'} \circ Ff$
for each morphism $f \colon C \to C'$ in $\C$ subject to a number of coherence axioms relating $\tau_f$ to the unit and composition 2-cells of $F$ and $G$.

Between pseudonatural transformations there are 3-morphisms called \emph{modifications}. See \cite{johnson20212d} for the formal details.
\end{definition}

\begin{cons}
	Let $\C$ be a category and let $F\:\C\to \Cat$ be a pseudofunctor. We can think of $F$ as a family of categories indexed by $\C$. The Grothendieck construction is a process of reorganisation of this data in terms of a single total category equipped with a projection functor that says which index each object came from. The categories of the family are then recovered by taking the fibres of this projection functor. This process is based on the idea of taking the disjoint union of the categories of the family, but also involves change-of-base operations to handle the relationships between different indices.
	
	The \dfn{Grothendieck construction} of $F$ is the functor $\groth{F}\:\Groth{F}\to \C$ which projects out the indices from the total category $\Groth{F}$, which is defined as follows:
		\begin{itemize}
			\item an object of $\Groth{F}$ is a pair $(C,X)$ with $C\in \C$ and $X\in F(C)$;
			\item a morphism $(C,X)\to (D,X')$ in $\Groth{F}$ is a pair $(f,\alpha)$ with $f\:C\to D$ a morphism in $\C$ and $\alpha\:F(f)(X)\to X'$ a morphism in $F(D)$.
		\end{itemize}
    The functor $\groth{F}$ then sends $(C,X)$ to $C$. It can be shown that this is an opfibration.

    There is also a dual construction sending pseudofunctors $F\colon \C\op \to \Cat$ to \emph{fibrations} into $\C$, for which we will also use the $\Groth{F}$ notation when no confusion is likely.
\end{cons}
For an introduction to the Grothendieck construction for semigroup theorists, see \cite{manuell2022monoid}.

\begin{definition}
    A pseudofunctor $F\colon \C \to \Cat$ from a 1-category $\C$ as above is also called an \emph{opindexed category}. A 1-morphism of opindexed categories from $F\: \C\to\Cat$ to $F'\: \C'\to \Cat$ is a functor $T\: \C \to \C'$ together with a pseudonatural transformation $\tau\: F \to F' T$. A 2-morphism of opindexed categories from $(T_1,\tau_1)\: F \to F'$ to $(T_2,\tau_2)\: F \to F'$ is a natural transformation $\alpha\: T_1 \to T_2$ together with a modification $\aleph\: F' \alpha \circ \tau_1 \to \tau_2$.
    The resulting 2-category is called $\OpICat$.
    A $\C$-indexed category is simply a $\C\op$-opindexed category. We call the 2-category of indexed categories $\ICat$.
    
    An \emph{indexed set} is a functor $F\colon \C\op \to \Set$. By viewing sets as discrete categories, we can view indexed sets as special indexed categories. We call the resulting 2-category of indexed sets $\ISet$.
\end{definition}

The Grothendieck construction extends to a 2-equivalence of 2-categories.
The following fundamental theorem is due to Grothendieck~\cite{grothendieck1971categories} (see also \cite{borceux1994handbook2}).

\begin{theorem}
	The Grothendieck construction extends to a 2-equivalence of 2-categories
	\[\groth{-}\colon \OpICat \aequi \Fib[o].\]
	Given a 1-morphism $(T,\tau) \colon F \to F'$ of opindexed categories, the induced morphism of opfibrations $\groth{(T,\tau)}\colon \Groth{F} \to \Groth{F'}$ lies over $T$ and sends $(C,X)$ to $(TC,\tau_C(X))$ and a morphism $(f,\alpha)\colon (C,X)\to(C',X')$ to $(Tf, \tau_{C'}(\alpha)\circ (\tau_f)_X)$.

	The quasi-inverse sends an opfibration $P\colon \E \to \B$ to the opindexed
	category on $\B$ given by its fibres.
    We obtain a similar 2-equivalence between $\ICat$ and $\Fib$.
	Moreover, the 2-equivalence restricts to one between indexed sets and discrete fibrations.
\end{theorem}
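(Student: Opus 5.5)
We prove the theorem by constructing both 2-functors explicitly and then exhibiting the unit and counit equivalences.

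\textbf{The 2-functor $\groth{-}$.} For an opindexed category $F\colon \C\to\Cat$, the total category $\Groth{F}$ is as in the Construction. Composition is
\[(g,\beta)\circ(f,\alpha) = \bigl(gf,\ \beta\circ F(g)(\alpha)\circ \ldots\bigr),\]
where the omitted factor is built from the compositor $\mu_{g,f}$, and identities use $\iota_C$. Associativity and unitality of this composition follow directly from the coherence axioms of the pseudofunctor.

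We then check that $\groth{F}$ is an opfibration. The lift of $f\colon C\to D$ at $(C,X)$ is $(f,\id_{F(f)X})\colon (C,X)\to (D,F(f)X)$. To see it is opcartesian, take $(gf,\gamma)\colon (C,X)\to(E,X'')$. A factorisation through $g$ is a morphism $(g,\beta)$ with $\beta\colon F(g)F(f)X\to X''$, and it must satisfy $\beta = \gamma\circ(\mu_{g,f})_X$ up to the conventions fixed above. Since $\mu_{g,f}$ is invertible, this $\beta$ exists and is unique.

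On 1-morphisms we use the formula given in the statement. Functoriality uses the coherence of $\tau$ with $\mu$ and $\iota$. The resulting functor sends the chosen lifts $(f,\id)$ to $(Tf,(\tau_f)_X)$, and these are opcartesian because $\tau_f$ is invertible. On 2-morphisms $(\alpha,\aleph)$ we send $(C,X)$ to $(\alpha_C,(\aleph_C)_X)$; naturality of this assignment is exactly the modification axiom. Preservation of composition holds up to the evident coherence, and is in fact strict once the definitions are unwound.

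\textbf{The quasi-inverse.} Given an opfibration $P\colon\E\to\B$, choose a cleavage, i.e.\ one opcartesian lift $\ov{f}^E$ for each $E$ and $f$. Define $\Phi(P)(B)=P^{-1}(B)$, the fibre over $B$. For $f\colon B\to B'$, let $f_*\colon P^{-1}(B)\to P^{-1}(B')$ act on objects by $E\mapsto f_*E$ and on vertical morphisms by the universal property of the lifts. The compositor $f_*g_* \cong (fg)_*$ and the unitor are the unique comparison isomorphisms between opcartesian lifts of the same morphism. Their coherence follows from the uniqueness in the universal property.

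A morphism of opfibrations $(G,T)$ restricts to functors between fibres. Because $G$ preserves opcartesian morphisms, $G(\ov{f}^E)$ and $\ov{Tf}^{GE}$ are both opcartesian lifts of $Tf$, so they are comparable by a unique isomorphism; these isomorphisms form $\tau_f$. A 2-morphism $(\tau,\sigma)$ of opfibrations has components $\tau_E$ lying over $\sigma_{PE}$. Factoring each $\tau_E$ as the opcartesian lift of $\sigma_{PE}$ followed by a vertical morphism gives the modification.

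\textbf{Equivalences.} For the counit, the functor $\Groth{\Phi(P)}\to\E$ sends $(B,E)$ to $E$ and sends $(f,\alpha)$ to $\alpha\circ\ov{f}^E$. It is full and faithful, since every morphism of $\E$ factors uniquely as an opcartesian lift followed by a vertical morphism. It is bijective on objects, commutes with the projections, and preserves opcartesian morphisms, so it is an isomorphism of opfibrations. For the unit, the fibre of $\Groth{F}$ over $C$ is isomorphic to $F(C)$. Under the canonical cleavage, reindexing along $f$ recovers $F(f)$, and the induced comparison cells are $\mu$ and $\iota$. Hence the unit is an invertible pseudonatural transformation, which is a 2-natural equivalence.

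\textbf{Variants.} The indexed case follows by applying the result to $P\op$. For discrete fibrations the lifts are unique, so all comparison isomorphisms are identities, the fibres are discrete, and the pseudofunctor is a genuine functor into $\Set$.

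\textbf{Main obstacle.} The hard step is coherence bookkeeping, mainly in showing that $\Phi$ is a (pseudo)functor on 1-cells and that the unit and counit are pseudonatural. I would handle this uniformly with one principle: any two parallel composites of comparison cells between opcartesian lifts of the same morphism coincide, because they are unique factorisations.
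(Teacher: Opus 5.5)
The paper gives no proof of its own here and defers to Grothendieck and Borceux; your argument is the classical one from those sources and is correct. Canonical lifts $(f,\id)$ make $\groth{F}$ an opfibration, a chosen cleavage turns the fibres into a pseudofunctor, the counit $\Groth{\Phi(P)}\to\E$ is an isomorphism by unique factorisation through chosen lifts, and the unit is an isomorphism via the canonical cleavage of $\Groth{F}$. The one step stated too quickly is the indexed case: applying the result to $P\op$ produces fibres $(P^{-1}B)\op$ and reverses all 2-cells. You must therefore also post-compose with the duality $(-)\op\colon\Cat^{\mathrm{co}}\to\Cat$ on the fibres, keeping track of the 2-cell directions, to arrive at $\ICat\simeq\Fib$.
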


\begin{remark}
    If a functor $P$ is both a fibration and an opfibration, we say it is a \emph{bifibration}. In this case, the change-of-base functor for the fibration is right adjoint to the change-of-base functor for the opfibration.
\end{remark}

In the example in this paper, we have a bifibration where the change of base for the fibration is given by functor composition. Thus, the change-of-base for the opfibration is given by the adjoint to this. This is none other than a Kan extension.

\begin{definition}
    Let $P\colon \A \to \B$ be a functor and let $\C$ be a category. Pre-composition with $P$ induces a functor $P^*\colon \HomC{\Cat}{\B} {\C} \to \HomC{\Cat}{\A}{\C}$ between functor categories.
    If this functor has a left adjoint, we call such an adjoint the \emph{left Kan extension functor along $P$} and denote it by $\Lan{P} \colon \HomC{\Cat}{\A}{\C} \to \HomC{\Cat}{\B}{\C}$.
\end{definition}
In good situations, the left Kan extension is given by the coend
\[(\Lan{P} F)(B) = \int^{A \in \A} \B(P(A), B) \cdot F(A)\]
where $X \cdot F(A)$ denotes the coproduct $\coprod_{x \in X} F(A)$.

\section{Variations on the monoidal Grothendieck construction}\label{secvarmongrconstr}

In this section, we develop some useful variations on the monoidal Grothendieck construction. In particular, we present the discrete monoidal Grothendieck construction as well as variations that involve groups and abelian groups instead of monoids. These, together with the monoidal Grothendieck construction, will constitute key tools for the theory developed in this paper. More precisely, the variations on the discrete monoidal Grothendieck construction will allow us to construct, locally, every Clifford monoid from the corresponding functor into $\Grp$, while the usual monoidal Grothendieck construction, as developed in \cite{MoeVas20}, will allow us to construct, globally, the total category of Clifford monoids.

As presented in \cite{MoeVas20}, the monoidal Grothendieck construction has two faces. This is due to the fact that the operation of fixing the base category in the equivalence of categories given by the Grothendieck construction does not commute with the operation of taking pseudo-monoids. The following diagram illustrates the situation:
\begin{equation*}
\begin{cd}*[3][-4.5]
\& {\Fib\simeq \ICat} \& \\
{\PsMon(\Fib)\simeq \PsMon(\ICat)} \&\& {\Fib_{\X}\simeq \HomC{\TwoCatps}{\X\op}{\Cat}} \\
{\PsMon(\Fib)_{\X}\simeq \HomC{\MonTwoCatps}{\X\op}{\Cat}} \&\& {\PsMon(\Fib_{\X})\simeq \HomC{\TwoCatps}{\X\op}{\MonCat}}
\arrow[mapsto,"{\PsMon(-)}"', from=1-2, to=2-1]
\arrow[mapsto,"{\text{fix base }\X}", from=1-2, to=2-3]
\arrow[mapsto,"{\text{fix base }\X}"', from=2-1, to=3-1]
\arrow[mapsto,"{\PsMon(-)}",from=2-3, to=3-3]
\end{cd}
\end{equation*}
Here $\Fib_\X$ denotes the sub-2-category of $\Fib$ consisting of fibrations with base category $\X$ and 1-morphisms and 2-morphisms which are identities at the base. Also, $\TwoCatps(\X\op,\Cat)$ is the 2-category of pseudofunctors from $\X\op$ to $\Cat$ and $\MonTwoCatps(\X\op,\Cat)$ is the 2-category of lax monoidal pseudofunctors. Finally, $\PsMon$ denotes the operation of taking pseudo-monoids. A pseudo-monoid in a monoidal 2-category is a weakened version of an internal monoid, generalising the notion of monoidal categories in $\Cat$.

In order to develop a discrete version of these two faces of the monoidal Grothendieck construction, we use a similar approach, but this time starting from the equivalence of categories given by the discrete version of the ordinary Grothendieck construction:
$$\Fib[d]\simeq \ISet.$$
For simplicity, and especially since we are primarily interested in the case that $\X$ is a preorder, we will view these as categories instead of 2-categories and hence take monoids instead of pseudo-monoids.
Simultaneously, we also present the variations on this obtained by taking internal groups and internal abelian groups.

\begin{proposition}
    Taking internal monoids in a monoidal category extends to a 2-functor
    $$\Monf(-)\:\MonCat\to \Cat.$$

    Similarly, taking internal groups or internal abelian groups in a cartesian monoidal category extends to 2-functors
    $$\Grpf(-)\:\CartMonCat\to \Cat$$
    $$\Abf(-)\:\CartMonCat\to \Cat.$$
\end{proposition}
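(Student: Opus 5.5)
We define the 2-functor $\Monf(-)$ directly on objects, 1-morphisms and 2-morphisms, and then check the functoriality axioms; these reduce to properties of the coherence data of strong monoidal functors. On objects, a monoidal category $\C$ is sent to the category $\Monf(\C)$ of internal monoids and monoid morphisms. For a strong monoidal functor $(F,\iota,\mu)\colon \C\to\D$ and a monoid $(M,m,e)$ in $\C$, we set
\[
\Monf(F)(M,m,e) = \bigl(FM,\; Fm\circ\mu_{M,M},\; Fe\circ\iota\bigr).
\]
Associativity of $Fm\circ\mu_{M,M}$ follows from naturality of $\mu$, the associativity coherence axiom of $F$ (relating $\mu$ with $\alpha^\C$ and $\alpha^\D$), and $F$ applied to the associativity of $m$. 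The unit laws follow in the same way from the unit coherence axioms of $F$ involving $\lambda,\rho$. A monoid morphism $h$ is sent to $Fh$, which is a monoid morphism by naturality of $\mu$. Only laxness is used here, so strength is not needed.

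On 2-morphisms, a monoidal natural transformation $\theta\colon F\Rightarrow G$ is sent to the natural transformation with components $\theta_M\colon FM\to GM$. These are monoid morphisms, because
\[
\theta_M\circ Fm\circ\mu^F = Gm\circ\theta_{M\otimes M}\circ\mu^F = Gm\circ\mu^G\circ(\theta_M\otimes\theta_M)
\]
by naturality of $\theta$ followed by monoidality of $\theta$, and the unit condition follows from $\theta_I\circ\iota^F=\iota^G$. Naturality of this family in $M$ is inherited from $\theta$.

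It remains to check 2-functoriality. Composition of strong monoidal functors in $\MonCat$ has structure maps $G\mu^F\circ\mu^G_{F,F}$ and $G\iota^F\circ\iota^G$. With this, $\Monf(GF)$ and $\Monf(G)\Monf(F)$ produce literally the same multiplication $GFm\circ G\mu^F\circ\mu^G$, so they agree strictly. Identities are preserved, and vertical and horizontal composites of 2-cells are preserved componentwise. Hence we obtain a strict 2-functor.

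For $\Grpf$ and $\Abf$ on $\CartMonCat$, we take the 1-morphisms to be the product-preserving functors, for which the strong monoidal structure $\mu$ is the canonical comparison isomorphism $F(X\times Y)\cong FX\times FY$ inverted. Then $\Monf(F)$ sends a group $(G,m,e,i)$ to $FG$ with inversion $Fi$. The group axiom $m(i,\id)=e\circ{!}$ is transported because $F$ preserves pairings $(i,\id)$ and the terminal map $!$ up to the canonical isomorphisms, and these cancel against $\mu$. Commutativity $m=m(\pi_2,\pi_1)$ is transported because $F$ preserves the symmetry, being the canonical one for products. Monoid homomorphisms between internal groups automatically preserve inverses, so $\Grpf(\C)$ and $\Abf(\C)$ are full subcategories of $\Monf(\C)$. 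The 2-functors $\Grpf$ and $\Abf$ are therefore obtained by restricting $\Monf(-)$ to $\CartMonCat$ and checking that it lands in these full subcategories.

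The main obstacle is this last step. We must match the structure isomorphisms of $F$ with the canonical product comparisons, so that $(i,\id)$ and the diagonal are preserved; this requires that 1-morphisms in $\CartMonCat$ preserve the cartesian structure. We would first make this convention explicit, and the remaining verifications are then routine diagram chases.
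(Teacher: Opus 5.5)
Your proposal is correct and is essentially the paper's argument. It uses the same explicit formulas: $FM$ with multiplication $Fm\circ\mu_{M,M}$ and unit $Fe\circ\iota$, and inversion $Fi$ for groups. The only difference is that you verify strict 2-functoriality by hand, whereas the paper gets it for free by identifying $\Monf(\A)$ with $\MonCatlax(\1,\A)$ and restricting the representable 2-functor $\MonCatlax(\1,-)$ to $\MonCat$.

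The convention you flag as the main obstacle is in fact automatic. For any strong monoidal functor between cartesian monoidal categories, naturality of $\mu$ at $!_Y$ and the right unit axiom give $F\pi_1\circ\mu_{X,Y}=\pi_1$, using that $\iota$ is invertible and $1$ is terminal; likewise for $\pi_2$. So $\mu_{X,Y}^{-1}$ is always the canonical comparison $(F\pi_1,F\pi_2)$.
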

\begin{proof}
    It is well-known that taking internal monoids extends to a 2-functor $\Monf(-)$; see, for example, \cite{JanKel01}.
    This is due to the fact that an internal monoid in a monoidal category $\A$ may equivalently be described as a lax monoidal functor from $\1$ to $\A$. Clearly, $\MonCatlax(\1,-)$ yields a 2-functor $\MonCatlax\to \Cat$ that restricts nicely to $\MonCat$. Explicitly, the 2-functor $\Monf(-)$ sends a strong monoidal functor $F\:\A\to \B$ to the functor $\Monf(F)\:\Monf(\A)\to \Monf(\B)$ that maps a monoid $(M,m,e)$ in $\A$ to the monoid in $\B$ given by $F(M)$, multiplication
    $$F(M)\otimes_{\B} F(M)\iso F(M\otimes_{\A} M)\aar{F(m)} F(M)$$
    and unit
    $$I_{\B}\iso F(I_{\A})\aar{F(e)}F(M)$$
    where the unlabelled isomorphisms are given by the structure of strong monoidal functor on $F$. $\Monf(F)$ then sends a morphism of monoids $f$ in $\A$ to $F(f)$, which is automatically a morphism of monoids in $\B$.

    We now prove that taking internal groups in a cartesian monoidal category extends to a 2-functor $\Grpf(-)\:\CartMonCat\to \Cat$. This is due to the fact that for every functor $F\:\A\to \B$ that preserves finite products, where $\A$ and $\B$ are cartesian monoidal categories, the functor $\Monf(F)\:\Monf(\A)\to \Monf(\B)$ restricts to a functor $\Grpf(\A)\to \Grpf(\B)$. Indeed, if $(M,m,e,\invv)$ is a group object in $\A$, we can equip the monoid $F(M)$ described above with the inverse map
    $$F(\invv)\:F(M)\to F(M).$$
    It is straightforward to prove that this yields a 2-functor $\Grpf(-)\:\CartMonCat\to \Cat$. Furthermore, a similar construction yields a 2-functor $\Abf(-)\:\CartMonCat\to \Cat$.
\end{proof}

In the following, we will examine the result of taking internal monoids and fixing the base category in either order and then use the equivalence $\Fib[d]\simeq \ISet$ to obtain our desired results.

\begin{cons}\label{consMonDFib} Let us describe the category $\Monf(\Fib[d])$.
    An object of $\Monf(\Fib[d])$ is a discrete fibration $P\: \E \to \B$ equipped with global multiplication ${\otimes_P} = ({\otimes_\E},{\otimes_\B})$ and unit $I_P = (I_\E,I_\B)$ as shown in the following diagrams:
    \begin{eqD*}
    \begin{cd}*
    {\E \x \E} \& \E \\
	{\B \x \B} \& \B
	\arrow["{\otimes_{\E}}", from=1-1, to=1-2]
	\arrow["{P \x P}"', from=1-1, to=2-1]
	\arrow["P", from=1-2, to=2-2]
	\arrow["{\otimes _{\B}}"', from=2-1, to=2-2]
        \end{cd}
        \qquad \qquad
        \begin{cd}*
           \1 \& \E \\
	\1 \& \B
	\arrow["{I_{\E}}", from=1-1, to=1-2]
	\arrow["{\id{}}"', from=1-1, to=2-1]
	\arrow["P", from=1-2, to=2-2]
	\arrow["{I_{\B}}"', from=2-1, to=2-2]
    \end{cd}
    \end{eqD*}
    These satisfy the internal monoid axioms as shown below:
    \begin{eqD*}
        \begin{cd}*
        \E \& {\E \x \E} \& \E \\
	\B \& {\B \x \B} \& \B
	\arrow["{I_{\E}\x \E}", from=1-1, to=1-2]
	\arrow["{P }"', from=1-1, to=2-1]
	\arrow["{\otimes_{\E}}", from=1-2, to=1-3]
	\arrow["{P\x P}", from=1-2, to=2-2]
	\arrow["P", from=1-3, to=2-3]
	\arrow["{I_{\B}\x \B}"', from=2-1, to=2-2]
	\arrow["{\otimes_{\B}}"', from=2-2, to=2-3]
            \end{cd}
            \qquad = \qquad
            \begin{cd}*
            \E \& \E \\
	\B \& \B
	\arrow[equals, from=1-1, to=1-2]
	\arrow["{P }"', from=1-1, to=2-1]
	\arrow["P", from=1-2, to=2-2]
	\arrow[equals, from=2-1, to=2-2]
            \end{cd}
            \end{eqD*}
            
             \begin{eqD*}
            \begin{cd}*
            {\E \x \E \x \E} \& {\E \x \E} \& \E \\
	{\B \x \B\x \B} \& {\B \x \B} \& \B
	\arrow["{\otimes_{\E} \x \E}", from=1-1, to=1-2]
	\arrow["{P\x P\x P}"', from=1-1, to=2-1]
	\arrow["{\otimes_{\E}}", from=1-2, to=1-3]
	\arrow["{P\x P}", from=1-2, to=2-2]
	\arrow["P", from=1-3, to=2-3]
	\arrow["{\otimes_{\B} \x \B}"', from=2-1, to=2-2]
	\arrow["{\otimes_{\B}}"', from=2-2, to=2-3]
            \end{cd}
            \quad = \quad
            \begin{cd}*
            {\E \x \E \x \E} \& {\E \x \E} \& \E \\
	{\B \x \B\x \B} \& {\B \x \B} \& \B
	\arrow["{\E \x \otimes_{\E} }", from=1-1, to=1-2]
	\arrow["{P\x P\x P}"', from=1-1, to=2-1]
	\arrow["{\otimes_{\E}}", from=1-2, to=1-3]
	\arrow["{P\x P}", from=1-2, to=2-2]
	\arrow["P", from=1-3, to=2-3]
	\arrow["{\B \x \otimes_{\B} }"', from=2-1, to=2-2]
	\arrow["{\otimes_{\B}}"', from=2-2, to=2-3]
            \end{cd}
            \end{eqD*}
    where the other unit law is dual to the shown one.
    
    This corresponds precisely to having strict monoidal structures $(\E, \otimes_{\E}, I_{\E})$ and $(\B, \otimes_{\B}, I_{\B})$ on $\E$ and $\B$ respectively, which make $P$ into a strict monoidal functor. 
    
    A morphism in $\Monf(\Fib[d])$ from $(\E \ar{P} \B, \otimes_{P}, I_P)$ to $(\E' \ar{Q} \B', \otimes_{Q}, I_Q)$ is a pair of functors $(H_0, H_1)$ such that the following square commutes 
    \begin{cd}
        \E \& {\E'} \\
	\B \& {\B'}
	\arrow["{H_1}", from=1-1, to=1-2]
	\arrow["P"', from=1-1, to=2-1]
	\arrow["Q", from=1-2, to=2-2]
	\arrow["{H_0}"', from=2-1, to=2-2]
    \end{cd}
    and the following axioms are satisfied
    \begin{eqD*}
        \begin{cd}*
         {\E \x \E} \& \E \& {\E'} \\
	{\B \x \B} \& \B \& {\B'}
	\arrow["{\otimes_{\E}}", from=1-1, to=1-2]
	\arrow["{P\x P}"', from=1-1, to=2-1]
	\arrow["{H_1}", from=1-2, to=1-3]
	\arrow["P", from=1-2, to=2-2]
	\arrow["Q", from=1-3, to=2-3]
	\arrow["{\otimes_{\B}}"', from=2-1, to=2-2]
	\arrow["{H_0}"', from=2-2, to=2-3]   
        \end{cd}
        \quad = \quad
        \begin{cd}*
        {\E \x \E} \& {\E'\x \E'} \& {\E'} \\
	{\B \x \B} \& {\B' \x \B'} \& {\B'}
	\arrow["{H_1 \x H_1}", from=1-1, to=1-2]
	\arrow["{P\x P}"', from=1-1, to=2-1]
	\arrow["{\otimes_{\E'}}", from=1-2, to=1-3]
	\arrow["{Q\x Q}", from=1-2, to=2-2]
	\arrow["Q", from=1-3, to=2-3]
	\arrow["{H_0 \x H_0}"', from=2-1, to=2-2]
	\arrow["{\otimes_{\B'}}"', from=2-2, to=2-3]
        \end{cd}
    \end{eqD*}
    \begin{eqD*}
        \begin{cd}*
            \1 \& \E \& {\E'} \\
	\1 \& \B \& {\B'}
	\arrow["{I_{\E}}", from=1-1, to=1-2]
	\arrow[equals, from=1-1, to=2-1]
	\arrow["{H_1}", from=1-2, to=1-3]
	\arrow["P"', from=1-2, to=2-2]
	\arrow["Q", from=1-3, to=2-3]
	\arrow["{I_{\B}}"', from=2-1, to=2-2]
	\arrow["{H_0}"', from=2-2, to=2-3]
        \end{cd}
        \quad = \quad
        \begin{cd}*
            \1 \& {\E'} \\
	\1 \& {\B'}
	\arrow["{I_{\E'}}", from=1-1, to=1-2]
	\arrow[equals, from=1-1, to=2-1]
	\arrow["Q", from=1-2, to=2-2]
	\arrow["{I_{\B'}}"', from=2-1, to=2-2]
        \end{cd}
    \end{eqD*}
    These correspond precisely to morphisms of discrete fibrations $(H_0,H_1)$ (i.e.\ just morphisms in the arrow category of $\Cat$) where $H_0$ and $H_1$ are strict monoidal functors.
\end{cons}

We can now restrict to monoidal discrete fibrations with given base.
\begin{cons}\label{consMonDFibX}
Given a strict monoidal category $(\X, \otimes_{\X}, I_{\X})$, an object of ${\Monf(\Fib[d])_{\X}}$ is a discrete fibration $P\: \E \to \X$ from a strict monoidal category $(\E, \otimes_{\E}, I_{\E})$ to $(\X, \otimes_{\X}, I_{\X})$ that is a strict monoidal functor. A morphism in ${\Monf(\Fib[d])_{\X}}$ from $P\: \E \to \X$ to $Q\: \E' \to \X$ is a strict monoidal functor $H\: \E \to \E'$ such that the following triangle commutes. 
\begin{cd}
    \E \&[-3ex] \&[-3ex] {\E'} \\
	\& \X
	\arrow["H", from=1-1, to=1-3]
	\arrow["P"', from=1-1, to=2-2]
	\arrow["Q", from=1-3, to=2-2]
\end{cd}
\end{cons}

Alternatively, if we first restrict to discrete fibrations with a given base before taking monoid objects, we obtain the following.

\begin{cons}\label{consMonDFibXins} Let $\X$ be a category. We consider the category $\Monf(\Fib[d]_{\X})$.
An object of $\Monf(\Fib[d]_{\X})$ is a discrete fibration $P:\E \to \X$ equipped with fibrewise multiplication $\otimes\: \E \x[\X] \E \to \E$ and unit $I\colon \X \to \E$ as in the diagrams below:
\begin{eqD*}
    \begin{cd}*
    {\E \x[\X] \E} \& \E \\
	{\X} \& \X
	\arrow["{\otimes}", from=1-1, to=1-2]
	\arrow["{P \boxtimes P}"', from=1-1, to=2-1]
	\arrow["P", from=1-2, to=2-2]
	\arrow[equal, from=2-1, to=2-2]
        \end{cd}
        \qquad \qquad
        \begin{cd}*
           \X \& \E \\
	\X \& \X
	\arrow["{I}", from=1-1, to=1-2]
	\arrow[equal, from=1-1, to=2-1]
	\arrow["P", from=1-2, to=2-2]
	\arrow[equal, from=2-1, to=2-2]
        \end{cd}
        \end{eqD*}
    where $P\boxtimes P$ is the composite map from $\E\x[\X] \E\to \X$ given by either of the two paths in the pullback square of $P$ along itself. 
        
    These satisfy identity and associativity axioms similar to the ones of \consx\ref{consMonDFib}. In particular, given $X\in \X$, we have a monoid structure $(P_X, \otimes_X, I_X)$ on the fibre $P_X$ of $P$ over $X$. Moreover, the reindexing functors of $P$ are automatically monoid homomorphisms. This is what will make the usual Grothendieck construction of $P$ factor through $\Mon$.

    A morphism in $\Monf(\Fib[d]_{\X})$ from $(\E \ar{P} \X, \otimes_P,I_P)$ to  $(\E' \ar{Q} \X, \otimes_Q,I_Q)$  is a functor $H\: \E \to \E'$ such that 
    \begin{cd}
        {\E} \& {\E'} \\
    \X \& \X
	\arrow["H", from=1-1, to=1-2]
	\arrow["P"', from=1-1, to=2-1]
	\arrow["Q", from=1-2, to=2-2]
	\arrow[equals, from=2-1, to=2-2]
        \end{cd}
    that satisfies axioms of preservation of unit and composition similar to the ones of \consx \ref{consMonDFib}. In particular, this ensures that for every $X\in \X$ we have that $H_X\: P_X \to Q_X$ is a monoid homomorphism.
\end{cons}

Now we take monoids on the indexed-category side of the equivalence.

\begin{cons}\label{consMonISet}
We consider the category $\Monf(\ISet)$.
An object of $\Monf(\ISet)$ is an indexed set $(\C, \C\op \ar{F} \Set)$ equipped with a multiplication $\otimes_F = (\otimes_\C,\tau)$ of the form 
$$(\C \x \C, \C\op \x \C \op \ar{F\x F} \Set \x \Set) \ar{(\otimes_{\C}, \tau)} (\C, \C \op \ar{F} \Set)$$
and a unit $I_F = (I_\C,\eta)$ of the form
$$(\1 , \1\op \ar{\Delta 1} \Set) \ar{(I_{\C} , \eta)} (\C , \C \op \ar{F} \Set),$$
where $\tau$ and $\eta$ are natural transformations as shown below.
\begin{eqD*}
\begin{cd}*[4]
    {\C\op \x \C\op } \& {\Set \x \Set} \& \Set \\
	{\C \op}
	\arrow["{F \x F}", from=1-1, to=1-2]
	\arrow[from=1-1, to=2-1,"{\otimes_{\C}}"']
	\arrow["\tau", shift left=22, Rightarrow, from=1-1, to=2-1, shorten <= 1ex, shorten >= 0ex ]
	\arrow["\x", from=1-2, to=1-3]
	\arrow["F"', bend right=20, from=2-1, to=1-3]
\end{cd}
\qquad 
\begin{cd}*[4.5]
{\1\op} \& \Set \\
	{\C \op}
	\arrow["\Delta1", from=1-1, to=1-2]
	\arrow["I^{\operatorname{op}}_{\C}"', from=1-1, to=2-1]
	\arrow["\eta", shift left=9, Rightarrow, from=1-1, to=2-1, shorten <= 0.8ex, shorten >= 1ex]
	\arrow["F"', bend right= 25, from=2-1, to=1-2] 
\end{cd}
\end{eqD*}
This data satisfies the corresponding axioms of unitality and associativity. This corresponds precisely to a strict monoidal structure $(\C, \otimes_{\C}, I_{\C})$ on $\C$ together with a lax monoidal structure on $F\: \C \op \to \Set$ given by $\tau$ and $\eta$.

A morphism in $\Monf(\ISet)$ from $((\C, \C\op \ar{F} \Set), \otimes_F, I_F)$ to $((\D, \D\op \ar{G} \Set), \otimes_G,I_G)$ is a pair $(H, \phi)$ where $H\:\C \to \D$ is a functor and $\phi$ is a natural transformation 
\begin{cd}
    {\C\op} \& \Set \\
	{\D \op}
	\arrow["F", from=1-1, to=1-2]
	\arrow["{H\op}"', from=1-1, to=2-1]
	\arrow["\phi", shift left=7, Rightarrow, from=1-1, to=2-1, shorten <= 1ex, shorten >= 1ex]
	\arrow["G"', bend right=25, from=2-1, to=1-2]
\end{cd}
such that the axioms of a morphism of internal monoids are satisfied. This corresponds precisely to $H$ being a strict monoidal functor and $\phi$ being a monoidal natural transformation.
\end{cons}

Again fixing the base we have the following.

\begin{cons}\label{consMonISetX}
Given a strict monoidal category $(\X, \otimes_{\X}, I_{\X})$, an object of $\Monf(\ISet)_{\X}$ is an object of $\Monf(\ISet)$ that has base $\X$. That is, a lax monoidal functor $\X\op \ar{F} \Set$. A morphism in $\Monf(\ISet)_{\X}$ from $\X\op \ar{F} \Set$ to $\X\op \ar{G} \Set$ is then simply a monoidal natural transformation $\alpha\: F \Rightarrow G$. So the category $\Monf(\ISet)_{\X}$ is isomorphic to the category $\HomC{\MonCatlax}{\X\op}{\Set}$.
\end{cons}

Finally, if we first fix the domain of the indexed set and then take monoids we arrive at the following construction.

\begin{cons}\label{consMonXopSet}
Let $\X$ be a category. It is well-known that, since $\Cat(\X\op,-)$ preserves limits, we have an isomorphism between $\Monf(\HomC{\Cat}{\X\op}{\Set})$ and the category $\HomC{\Cat}{\X\op}{\Mon}$.
\end{cons}

We can now state the main theorem of this section.

\begin{theorem}[The discrete monoidal Grothendieck construction]\label{theordiscmongroth}
    There are equivalences of categories
    \begin{cd}[3][-4.5]
    \& {\Fib[d]\simeq \ISet} \& \\
	{\Monf(\Fib[d])\simeq \Monf(\ISet)} \&\& {\Fib[d]_{\X}\simeq \HomC{\Cat}{\X\op}{\Set}} \\
	{\Monf(\Fib[d])_{\X}\simeq \HomC{\MonCatlax}{\X\op}{\Set}} \&\& {\Monf(\Fib[d]_{\X})\simeq \HomC{\Cat}{\X\op}{\Mon}}
	\arrow[mapsto,"{\Monf(-)}"', from=1-2, to=2-1]
	\arrow[mapsto,"{\text{fix base }\X}", from=1-2, to=2-3]
	\arrow[mapsto,"{\text{fix base }\X}"', from=2-1, to=3-1]
	\arrow[mapsto,"{\Monf(-)}",from=2-3, to=3-3]
\end{cd}
\end{theorem}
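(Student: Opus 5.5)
The plan is to derive all four equivalences from the single equivalence $\Fib[d]\simeq\ISet$ and its fixed-base version $\Fib[d]_{\X}\simeq\HomC{\Cat}{\X\op}{\Set}$. Both are restrictions of the Grothendieck 2-equivalence stated earlier, with 2-cells discarded since we work with categories. The essential observation is that these equivalences preserve the finite products used to define monoids. Once that is established, taking $\Monf(-)$ on both sides gives the diagonal equivalences. Unwinding the resulting categories via \consx\ref{consMonDFib}--\consx\ref{consMonXopSet} then identifies them with the categories named in the bottom row.

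\textbf{Step 1.} First I will check that $\groth{-}\colon\ISet\to\Fib[d]$ preserves finite products. Products in $\ISet$ are $(\C\times\D,\ F\times G)$, with terminal object $(\1,\Delta 1)$. Products in $\Fib[d]$ are computed in the arrow category of $\Cat$. One verifies directly that $\Groth{(F\times G)}\cong\Groth{F}\times\Groth{G}$ over $\C\times\D$: an object $((C,D),(x,y))$ corresponds to $((C,x),(D,y))$, and morphisms are matched componentwise. Also $\Groth{\Delta 1}=\id_{\1}$.

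An equivalence between cartesian monoidal categories preserving products induces an equivalence of their categories of internal monoids, by functoriality of $\Monf(-)$ from the Proposition above. This yields $\Monf(\Fib[d])\simeq\Monf(\ISet)$. Using \consx\ref{consMonDFib} and \consx\ref{consMonISet}, this can be made explicit. Strict monoidal discrete fibrations $P\colon\E\to\B$ correspond to strict monoidal $\B$ together with a lax monoidal $F\colon\B\op\to\Set$. Concretely, $\tau_{X,Y}\colon F X\times F Y\to F(X\otimes Y)$ sends $(e,e')$ to $e\otimes_\E e'$, and $\eta$ picks out $I_\E$.

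\textbf{Step 2.} Next, fix the base. The equivalence in Step 1 is compatible with the projection to base categories; both sides project to strict monoidal categories and strict monoidal functors. Restricting to fibres over a fixed strict monoidal $\X$ and identity base maps therefore gives $\Monf(\Fib[d])_{\X}\simeq\Monf(\ISet)_{\X}$. The latter is $\HomC{\MonCatlax}{\X\op}{\Set}$ by \consx\ref{consMonISetX}.

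The care needed here is to confirm that a morphism in $\Monf(\ISet)$ with identity base component is exactly a monoidal natural transformation. It also has to be confirmed that the Grothendieck functor sends such morphisms to morphisms in $\Monf(\Fib[d])$ over $\id_\X$. Both follow from the explicit formula for $\groth{(T,\tau)}$ with $T=\id$.

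\textbf{Step 3.} For the right-hand side, $\Fib[d]_{\X}\simeq\HomC{\Cat}{\X\op}{\Set}$ is the fibre over $\X$ of the discrete Grothendieck equivalence. Products in $\Fib[d]_{\X}$ are pullbacks $\E\x[\X]\E'$ over $\X$, with terminal object $\id_\X$. Products in the presheaf category are pointwise. The fixed-base analogue of Step 1 shows that $\Groth{(F\times G)}\cong\Groth{F}\x[\X]\Groth{G}$. Taking monoids then gives $\Monf(\Fib[d]_{\X})\simeq\Monf(\HomC{\Cat}{\X\op}{\Set})$, which is $\HomC{\Cat}{\X\op}{\Mon}$ by \consx\ref{consMonXopSet}.

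The main obstacle I expect is Step 2. Fixing the base is not itself a functorial operation on equivalences in general, so one must check that the equivalence of Step 1 and its quasi-inverse are genuinely fibred over the category of strict monoidal categories. In particular, the unit and counit isomorphisms must have identity base components. I would handle this by using the explicit quasi-inverse that takes fibres: the fibre of a strict monoidal discrete fibration over $X$ inherits $\tau$ from $\otimes_\E$ restricted to fibres. Its counit $\Groth{P_{(-)}}\to\E$ lies over $\id_\X$ and is strict monoidal by construction, so the restriction to fixed base goes through.
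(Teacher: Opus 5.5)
Your proposal is correct and follows essentially the same route as the paper. The discrete Grothendieck equivalence and its fixed-base version preserve finite products, so each is an equivalence in $\MonCat$ to which the 2-functor $\Monf(-)$ applies; since the equivalence lies strictly over $\Cat$ with unit and counit over identities, it restricts to a fixed base $\X$, and the constructions then identify the resulting categories. The differences are cosmetic: you check product preservation by hand where the paper simply notes that equivalences preserve limits, and you leave implicit that the unit and counit are monoidal, which the paper mentions and which is automatic for natural transformations between product-preserving functors of cartesian categories.
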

\begin{proof}
    The equivalence of categories $\Fib[d]\simeq \ISet$ is an internal equivalence in $\MonCat$. Indeed, both $\Fib[d]$ and $\ISet$ are cartesian monoidal categories. And since the two pseudo-inverses form an equivalence, they both preserve limits and are thus strong monoidal functors.
    It is straightforward to show that the natural isomorphisms regulating the equivalence of categories are monoidal, thus exhibiting an internal equivalence in $\MonCat$. Moreover, it is also straightforward to show that restricting to a fixed base $\X$ preserves the equivalences that we have. This is due to the fact that the equivalence $\Fib[d]\simeq \ISet$ is over $\Cat$, which means that it forms a commutative triangle with the codomain functor $\Fib[d]\to \Cat$ and with the projection functor $\pi_1\:\ISet\to \Cat$ that takes the base category. Since any 2-functor sends equivalences to equivalences, in particular $\Monf(-)$ does so. We then conclude by applying the results of the constructions above.
\end{proof}

\begin{cons}\label{consexplGrMon}
    We can extract from \thex\ref{theordiscmongroth} the explicit action of the discrete monoidal Grothendieck constructions. On the left leg, consider a lax monoidal functor $F\: \X\op \to \Set$, presented by the natural transformations
    \begin{eqD*}
        \begin{cd}*
         {\X\op \x \X \op} \&\& {\X \op} \\
	{\Set \x \Set} \&\& \Set
	\arrow["{\otimes_{\X}^{\operatorname{op}}}", from=1-1, to=1-3]
	\arrow["{F \x F}"', from=1-1, to=2-1]
	\arrow["\tau", shorten <= 4ex, shorten >= 4ex, Rightarrow, from=2-1, to=1-3]
	\arrow["F", from=1-3, to=2-3]
	\arrow["\x"', from=2-1, to=2-3]   
        \end{cd}
        \qquad\qquad
        \begin{cd}*
            \1 \&\& {\X \op} \\
	\&\& \Set
	\arrow["{I^{\operatorname{op}}_{\X}}", from=1-1, to=1-3]
	\arrow["1"',""{name=0, anchor=center, inner sep=0}, from=1-1, to=2-3]
	\arrow["F", from=1-3, to=2-3]
	\arrow["\eta"', shorten <= 1ex, shorten >= 1ex, Rightarrow, from=0, to=1-3]
        \end{cd}
    \end{eqD*}
    The corresponding discrete fibration $\Groth{F}\to \X$ is constructed as in the usual Grothendieck construction. The total category $\Groth{F}$ is then equipped with a global monoidal structure with monoidal unit 
    $$I_{\Groth{F}}=(I_{\X},\eta)$$
    and a tensor product, which is defined on objects by 
    $$ (X, C) \otimes_{\Groth{F}} (X',D) =(X\otimes_{\X} X',\tau_{X,X'}(C,D))$$
    and on morphisms by
    $$f\otimes_{\Groth{F}} g = f\otimes_{\X} g.$$
    
    On the right leg, given a presheaf of monoids $F\: \X\op \to \Mon$ the corresponding discrete fibration $\Groth{F}\to \X$ is the usual Grothendieck construction applied to the presheaf 
    $$\X\op \ar{F} \Mon \ar{U} \Set,$$ where $U$ is the forgetful functor. Notice that the fibres are automatically equipped with a monoid structure since $F$ takes values in $\Mon$.
\end{cons}

Note that the non-commutativity of the operations of taking monoids and fixing the base observed in \cite{MoeVas20} is already on display in the discrete version of the monoidal Grothendieck construction. There it is shown that the constructions coincide when the base category $\X$ is (strictly) cartesian monoidal. We now show this directly in the discrete case.

\begin{proposition}\label{propcartmon}
Let $\X$ be a strict cartesian monoidal category. There is an equivalence of categories
$${\Monf(\Fib[d])_{\X}\simeq \HomC{\MonCatlax}{\X\op}{\Set}} \simeq{\Monf(\Fib[d]_{\X})\simeq \HomC{\Cat}{\X\op}{\Mon}}$$
\end{proposition}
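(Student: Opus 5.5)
By \thex\ref{theordiscmongroth}, the outer equivalences are already available. So it suffices to construct an equivalence (in fact an isomorphism) of categories
\[\HomC{\MonCatlax}{\X\op}{\Set}\;\cong\;\HomC{\Cat}{\X\op}{\Mon},\]
where $\X$ is strict cartesian monoidal and $\X\op$ carries the opposite structure. Since $\X$ is cartesian, $\otimes_\X = \times$ and $I_\X = 1$ is terminal. Hence $\X\op$ has the coproduct (cocartesian) structure, with unit the initial object $I^{\operatorname{op}}_\X$. The target $\Set$ is cartesian. The key fact I would exploit is the classical correspondence, going back to Eilenberg--Kelly style arguments and used in \cite{MoeVas20}: lax monoidal functors from a cocartesian category into a cartesian one are the same as functors into monoids. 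I would prove it directly in this discrete setting.

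\textbf{From lax monoidal to $\Mon$-valued.} Given $(F,\tau,\eta)$, for each $X$ I define a monoid structure on $F(X)$. The multiplication is the composite
\[F(X)\times F(X)\xrightarrow{\tau_{X,X}} F(X\times X)\xrightarrow{F(\Delta_X)} F(X),\]
where $\Delta_X\colon X\to X\times X$ is the diagonal, viewed as a map $X\times X\to X$ in $\X\op$. The unit is
\[1\xrightarrow{\eta} F(1)\xrightarrow{F(!_X)} F(X).\]
Associativity and unitality follow from the lax coherence axioms together with naturality of $\tau$ and the identities $(\Delta\times\id)\Delta=(\id\times\Delta)\Delta$ and $(!\times\id)\Delta=\id$ in $\X$. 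Naturality of $\tau$ and $\eta$ shows that each $F(f)$ is a monoid homomorphism. So we obtain a functor $\X\op\to\Mon$. A monoidal natural transformation then becomes componentwise a monoid homomorphism, by naturality plus the monoidality equations.

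\textbf{From $\Mon$-valued to lax monoidal.} Given $G\colon\X\op\to\Mon$, set
\[\tau_{X,Y}(a,b)=G(\pi_1)(a)\cdot G(\pi_2)(b)\in G(X\times Y)\]
and let $\eta$ pick out the unit of $G(1)$. Naturality of $\tau$ is immediate because the $G(f)$ are homomorphisms. The associativity axiom reduces to associativity in $G(X\times Y\times Z)$ together with compatibility of projections. The unit axioms use that the unit is preserved and that $1$ is terminal, so $X\times 1 = X$ strictly.

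\textbf{Mutual inverses.} Finally I check that the two assignments are mutually inverse.
\begin{itemize}
\item Going from $G$ to $(\tau,\eta)$ and back recovers the multiplication, because $G(\Delta)(G(\pi_1)a\cdot G(\pi_2)b)=ab$ since $\pi_i\Delta=\id$.
\item Going the other way requires showing $\tau_{X,Y}(a,b)=F(\Delta_{X\times Y})\,\tau(F(\pi_1)a,F(\pi_2)b)$.
\end{itemize}
The second check is the main obstacle. I would handle it using naturality of $\tau$ in both variables to rewrite the right-hand side as $F((\pi_1\times\pi_2)\Delta)\,\tau(a,b)$, and then use $(\pi_1\times\pi_2)\Delta=\id_{X\times Y}$. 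Strictness of the cartesian structure lets all these equalities hold on the nose, so the result is an isomorphism of categories. Composing it with the equivalences of \thex\ref{theordiscmongroth} gives the stated chain.
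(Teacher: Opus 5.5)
Your proposal is correct and follows essentially the same route as the paper's proof. You use the same fibrewise monoid structure $F(\Delta_X)\circ\tau_{X,X}$ with unit $F(!_X)\circ\eta$, and conversely the same $\tau_{X,Y}(a,b)=G(\pi_1)(a)\cdot G(\pi_2)(b)$ with $\eta$ picking the unit of $G(1)$. The differences are that you stay entirely on the indexed side and explicitly carry out the mutual-inverse check (via naturality of $\tau$ and $(\pi_1\times\pi_2)\Delta_{X\times Y}=\id$). The paper leaves that check as straightforward, and it phrases the reverse direction as a global monoidal structure on $\Groth{F}$.
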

\begin{proof}[Proof (sketch)]
    Let $F\: \X\op \to \Set$ be a lax monoidal functor. Then $F$ corresponds to a monoid in $\Fib[d]$ with base $\X$. Call this discrete fibration $P\:\E\to \X$. The global strict monoidal structure on $\E$ is given by the natural transformations $\tau$ and $\eta$ that make $F$ into a lax monoidal functor, as in \consx \ref{consMonISet}. We induce a local monoid structure on each fibre $F(X)$ as follows:
    $$\otimes_X\: F(X) \x F(X) \ar{\otimes_\E=\tau_{X,X}} F(X \x X) \ar{F((\id{}, \id{}) )} F(X)$$
    $$I_X\: 1 \ar{I_\E=\eta} F(1) \ar{F(!)} F(X)$$ 
    It is then straightforward to check that $(F(X), \otimes_X, I_X)$ is a monoid and $F(f)$ is automatically a monoid homomorphism for every morphism $f$ in $\X$.

    On the other hand, given a functor $F\: \X\op \to \Mon$, consider the local monoid structures $(F(X),\otimes_X,I_X)$ on the fibres. We construct the corresponding global strict monoidal structure $\otimes_{\Groth{F}}$ on $\Groth{F}$ as follows. Define
    $$\tau_{X,Y}=( F(X)\x F(Y) \ar{F(\pi_1)\x F(\pi_2)} F(X\x Y)\x F(X\x Y) \ar{\otimes_{X\x Y}} F(X\x Y))$$
    $$\eta=I_1\: 1 \to F(1)$$
    We construct
    $$(X,a)\otimes_{\Groth{F}}(Y,b)=(X\times Y, \tau_{X,Y}(a,b))$$
    $$I_{\Groth{F}}=(1,\eta).$$
    It is then straightforward to check that this gives a global monoidal structure on $\Groth{F}$ making $\Groth{F}\to \X$ into a monoid in $\Fib[d]$. By the discrete monoidal Grothendieck construction, this is equivalent to a lax monoidal functor $\X\op \to \Set$.
\end{proof}

Similarly to how we obtained the discrete monoidal Grothendieck construction, we can also apply the 2-functors $\Grpf$ and $\Abf$ instead of $\Monf$ to the equivalence of categories $\Fib[d]\simeq \ISet$ and obtain the following result.

\begin{theorem}\label{theorgrothconstrgrpab}
    There are equivalences of categories
    \begin{cd}[3][-4.5]
    \& {\Fib[d]\simeq \ISet} \& \\
	{\Grpf(\Fib[d])\simeq \Grpf(\ISet)} \&\& {\Fib[d]_{\X}\simeq \HomC{\Cat}{\X\op}{\Set}} \\
	{\Grpf(\Fib[d])_{\X}\simeq \Grpf(\ISet)_{\X}} \&\& {\Grpf(\Fib[d]_{\X})\simeq \HomC{\Cat}{\X\op}{\Grp}}
	\arrow[mapsto,"{\Grpf(-)}"', from=1-2, to=2-1]
	\arrow[mapsto,"{\text{fix base }\X}", from=1-2, to=2-3]
	\arrow[mapsto,"{\text{fix base }\X}"', from=2-1, to=3-1]
	\arrow[mapsto,"{\Grpf(-)}",from=2-3, to=3-3]
\end{cd}\v[1]
\begin{cd}[3][-4.5]
    \& {\Fib[d]\simeq \ISet} \& \\
	{\Abf(\Fib[d])\simeq \Abf(\ISet)} \&\& {\Fib[d]_{\X}\simeq \HomC{\Cat}{\X\op}{\Set}} \\
	{\Abf(\Fib[d])_{\X}\simeq \Abf(\ISet)_{\X}} \&\& {\Abf(\Fib[d]_{\X})\simeq \HomC{\Cat}{\X\op}{\Ab}}
	\arrow[mapsto,"{\Abf(-)}"', from=1-2, to=2-1]
	\arrow[mapsto,"{\text{fix base }\X}", from=1-2, to=2-3]
	\arrow[mapsto,"{\text{fix base }\X}"', from=2-1, to=3-1]
	\arrow[mapsto,"{\Abf(-)}",from=2-3, to=3-3]
\end{cd}
\end{theorem}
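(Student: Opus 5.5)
The plan mirrors the proof of \thex\ref{theordiscmongroth}, with the 2-functors $\Grpf(-)$ and $\Abf(-)\colon\CartMonCat\to\Cat$ of the earlier proposition replacing $\Monf(-)$. The top equivalence $\Fib[d]\simeq\ISet$ is between cartesian monoidal categories. Since each pseudo-inverse is an equivalence, it preserves finite products, so the equivalence lives in $\CartMonCat$. The unit and counit are natural isomorphisms between product-preserving functors, and these are automatically monoidal for the cartesian structure, so we have an internal equivalence in $\CartMonCat$. Applying the 2-functors $\Grpf(-)$ and $\Abf(-)$, which send internal equivalences to equivalences, gives the left-hand equivalences $\Grpf(\Fib[d])\simeq\Grpf(\ISet)$ and $\Abf(\Fib[d])\simeq\Abf(\ISet)$.

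For the lower-left equivalences, I will use that $\Fib[d]\simeq\ISet$ lies over $\Cat$. It commutes with the codomain functor $\Fib[d]\to\Cat$ and with the base projection $\pi_1\colon\ISet\to\Cat$, and both projections preserve products. Consequently the equivalence of group objects restricts to those group objects whose underlying base is $\X$, with morphisms that are the identity on the base. Here a group object over $\X$ means that the base $\X$ itself carries a group-object structure and that the unit and counit components are identities on the base. This gives $\Grpf(\Fib[d])_\X\simeq\Grpf(\ISet)_\X$, and similarly for $\Abf$. Unlike the monoid case, I do not intend to identify these with functor categories, since the statement leaves them abstract.

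For the right leg, I first fix the base: $\Fib[d]_\X\simeq\HomC{\Cat}{\X\op}{\Set}$. This is again an equivalence of cartesian monoidal categories. Products in $\Fib[d]_\X$ are pullbacks over $\X$, corresponding to pointwise products of presheaves, and the equivalence preserves them. Hence $\Grpf(\Fib[d]_\X)\simeq\Grpf(\HomC{\Cat}{\X\op}{\Set})$. It remains to prove the analogue of \consx\ref{consMonXopSet}, namely $\Grpf(\HomC{\Cat}{\X\op}{\Set})\cong\HomC{\Cat}{\X\op}{\Grp}$ and the analogous isomorphism for $\Abf$. Since limits in the presheaf category are computed pointwise, a group object is a presheaf $F$ together with natural maps $m\colon F\times F\to F$, $e\colon 1\to F$ and $\invv\colon F\to F$ satisfying the group equations. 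Evaluating at each $X$ makes $F(X)$ a group, and naturality says exactly that each $F(f)$ is a group homomorphism. Conversely, a functor $\X\op\to\Grp$ yields such natural structure maps. Morphisms match because a monoid homomorphism between groups automatically preserves inverses, and commutativity $m=m(\pi_2,\pi_1)$ is also checked pointwise.

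The main obstacle is the bookkeeping in the fix-base step. We must verify that restricting to a fixed base is compatible with taking group objects, so that the object and morphism data really restrict, including the inversion maps. Since $\Grpf$ only adds a property-like extra map that is preserved by product-preserving functors, I expect this to follow exactly as in the monoid case. The remaining verifications are routine pointwise checks.
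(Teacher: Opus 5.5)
Your proposal is correct and takes the same approach as the paper. The paper only says to rerun the proof of \cref{theordiscmongroth} with $\Grpf(-)$ and $\Abf(-)$ in place of $\Monf(-)$: $\Fib[d]\simeq\ISet$ is an internal equivalence of cartesian monoidal categories lying over $\Cat$, so it survives both taking group objects and fixing the base. Your pointwise check that $\Grpf(\HomC{\Cat}{\X\op}{\Set})\cong\HomC{\Cat}{\X\op}{\Grp}$ (and likewise for $\Abf$) is the analogue of \consx\ref{consMonXopSet} that the paper leaves implicit.
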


\begin{remark}\label{remexpl}
    We can extract from \thex\ref{theorgrothconstrgrpab} the explicit Grothendieck constructions for the groups and abelian groups variations of the discrete monoidal Grothendieck construction. These are completely analogous to the ones described in \consx \ref{consexplGrMon} in the monoidal case. 
\end{remark}

In \cref{sec:monoids_in_monoidal_fibrations,secinvsemirings} we will also make use of the non-discrete monoidal Grothendieck construction developed by Moeller and Vasilakopoulou. The details can be found in \cite{MoeVas20}, but conceptually, it is simply a 2-dimensional upgrading of the discrete monoidal Grothendieck construction described above. We just recall here below the definition of monoidal fibration in the global and in the fibrewise case. Analogous definitions can be given for opfibrations.

\begin{definition}[\cite{MoeVas20}]\label{defmonoidalfibration}
    A \emph{monoidal fibration} is a pseudomonoid in $(\Fib,\times,1_\1)$. Equivalently, it is a fibration for which both the total and base category are monoidal, the underlying functor is a strict monoidal functor and the tensor product of the total category preserves cartesian liftings.
\end{definition}

Notice that the previous definition is the non-discrete version of Construction \ref{consMonDFib}. That is, the case in which we have a global tensor product in the total category. The tensor product of objects $C$ over $X$ and $D$ over $X'$ is an object in the total category over the tensor product $X\otimes X'$ in the base.

We now consider the fibrewise version of the notion of monoidal fibration that will be the non-discrete version of Construction \ref{consMonDFibX}. We will call this, which has no name in \cite{MoeVas20}, a \emph{fibrewise monoidal fibration}.

\begin{definition}[\cite{MoeVas20}]
    A \emph{fibrewise monoidal fibration} is a pseudomonoid in $(\Fib_\X,\boxtimes,1_\X)$. Explicitly, it is an ordinary fibration $P\colon\A\to\X$ equipped with two fibred functors $(m,1_\X)\colon P\boxtimes P\to P$ and $(j,1_\X)\colon 1_\X\to P$ 
along with invertible fibred 2-cells satisfying the usual axioms. 
\end{definition}

Note that, given a fibrewise monoidal fibration, we can only take the tensor product of two objects in the total category that belong to the same fibre.

\cite{MoeVas20} also describes explicit constructions for the two legs of the non-discrete monoidal Grothendieck construction; those constructions are similar to \consx\ref{consexplGrMon}.

\begin{remark}\label{remnotationupg}
    Following the notation of \cite{MoeVas20}, we will denote as $\MonTwoCatps$ the tricategory of monoidal 2-categories, weakly lax monoidal pseudofunctors, weakly monoidal pseudonatural transformations and monoidal modifications. The fact that this forms a tricategory follows from the result in \cite{CheGur11} that $\catfont{MonBicat}$ is a tricategory.
    For every pair $(\A,\B)$ of monoidal 2-categories, the hom-object $\HomC{\MonTwoCatps}{\A}{\B}$ is a strict 2-category. Moreover, restricting to weakly lax monoidal 2-functors and weakly monoidal 2-natural transformations, we obtain a sub-2-category $\HomC{\MonTwoCat}{\A}{\B}$. As proved in \cite{MoeVas20}, taking pseudo-monoids in a monoidal 2-category then extends to a 2-functor
    $$\PsMon(-)\simeq\HomC{\MonTwoCat}{\1}{-}\:\MonTwoCatps\to \twoCat$$
    which maps a monoidal 2-category to the 2-category of internal pseudo-monoids in it, strong morphisms and 2-cells between them.
\end{remark}

\section{Clifford monoids as discrete fibrations over semilattices}

We are now in a position to explain the structure theorem for Clifford monoids in terms of the monoidal Grothendieck construction.

Recall from \cref{sec:introduction} that a Clifford monoid can be described in terms of a functor $G\colon L\op \to \Grp$ where $L$ is a meet-semilattice and that the original monoid can be recovered as $\{(e,g) \mid e \in L, g \in G(e)\}$
with \[(e,g) \cdot (e',g') = (ee', G(ee' \le e)(g) \cdot G(ee' \le e')(g')).\]

Since meet-semilattices are essentially the same as cartesian monoidal posets, we can apply the construction from \cref{propcartmon} to $G$ to obtain a monoidal discrete fibration into $L$.
We find that the objects of $\Groth{G}$ and the action of the tensor product on them are precisely the elements of the reconstructed monoid above. Moreover, the order structure is such that $(e,g) \le (e',g')$ if and only if $e \le e'$ and $G(e \le e')(g') = g$. This holds when $(e,g) = (e',g') \cdot (e,1)$ and so we actually recover the usual order on a Clifford monoid. The functor $\Groth{G} \to L$ simply finds the idempotent associated to each element of the Clifford monoid.

In fact, we can go further and use our characterisations from \cref{secvarmongrconstr} to give a new conceptual proof of the structure theorem. Let us start by proving a few useful lemmas.

\begin{lemma}\label{lem:clifford_gives_fibration}
  Let $M$ be a Clifford monoid and let $\eta_M\colon M \to E(M)$ be the monoid homomorphism sending $x$ to $e_x = xx^*$. (This is the universal semilattice quotient, see \cref{prop:both_adjoints}). If $M$ is viewed as a monoidal poset with its canonical order, then $\eta_M$ is a monoidal discrete fibration.
\end{lemma}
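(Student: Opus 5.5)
The plan is to check the required properties one by one. By \consx\ref{consMonDFib}, a monoidal discrete fibration is a discrete fibration between strict monoidal categories that is a strict monoidal functor. So I need three things: $M$ (with its natural order) and $E(M)$ are strict monoidal posets; $\eta_M$ is a monotone strict monoidal map; and $\eta_M$ is a discrete fibration.

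First, the monoidal structure. The natural order on $M$ is compatible with multiplication: if $x = ye$ and $x' = y'e'$ with $e,e'$ idempotent, then $xx' = yy'ee'$, because idempotents are central. Hence multiplication is a monotone map $M \times M \to M$, and $(M,\cdot,1)$ is a strict monoidal poset. The same holds for $E(M)$, whose order is the natural one. The map $\eta_M$ is a monoid homomorphism: $e_{xy} = xy(xy)^* = xyy^*x^* = xx^*yy^*$, using that $(xy)^*=y^*x^*$ in an inverse semigroup and that $yy^*$ is central; also $e_1 = 1$. It is monotone because $x = ye_x$ gives $e_x = ye_x e_x^* y^* = e_y e_x$, so $e_x \le e_y$. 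Since all structure is strict and posets have at most one morphism between two objects, $\eta_M$ is a strict monoidal functor.

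Next, the discrete fibration property, which is the main step. Let $y \in M$ and let $f \le e_y$ in $E(M)$. I must show there is a unique $x \le y$ with $e_x = f$. For existence, take $x = yf$. Then $x \le y$, and $e_x = yf f^* y^* = f\,yy^* = f e_y = f$, using centrality of idempotents and $f = f^* = f^2$. For uniqueness, suppose $x \le y$ and $e_x = f$. Since the witnessing idempotent may be taken to be $e_x$ (as recalled in the introduction), we get $x = y e_x = yf$. Uniqueness of the lifted morphism is automatic because $M$ is a poset.

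The only point needing care is the fact that $x \le y$ implies $x = ye_x$. I would verify it directly rather than cite it: if $x = ye$, then $ye_x = y\,ye\,e^*y^* = yy^*y\,e = ye = x$, using centrality of $e$, $ee^*=e$, and $yy^*y=y$. With this in hand, the verification is complete.
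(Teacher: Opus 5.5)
Your proposal is correct and follows essentially the same route as the paper: $\eta_M$ is strict monoidal because it is a monoid homomorphism, and the unique lift of $f \le e_y$ is $yf \le y$, with uniqueness coming from the fact that $x \le y$ forces $x = ye_x$. You also make explicit some details the paper leaves implicit: that multiplication and $\eta_M$ are monotone, and that the witnessing idempotent can be taken to be $e_x$ (a step which also tacitly uses centrality of the idempotent $yy^*$).
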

\begin{proof}
  Since $\eta_M$ is a monoid homomorphism, it is a strict monoidal functor. To see it is a discrete fibration, suppose $e \le e_m = \eta_M(m)$ for $e \in E(M)$ and $m \in M$. Then a lift of this is an inequality $a \le m$ such that $e_a = e$. Note that $a \le m$ means $a = me$ and so this is the only possible lift. Finally, note that $\eta_M(me) = e_m \wedge e = e$ and so this is indeed a lift and $\eta_M$ is a discrete fibration.
\end{proof}
 
\begin{lemma}\label{lemmapos}
    Let $\X$ be a poset and let $\E \ar{P} \X$ be a discrete fibration. Then the total category $\E$ is a poset.
\end{lemma}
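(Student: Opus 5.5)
We need to show that $\E$ is thin, i.e.\ has at most one morphism between any two objects, and that isomorphic objects of $\E$ are equal. We work directly with the uniqueness clause in the definition of a discrete fibration.

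\emph{Thinness.} Let $u,v\colon A\to B$ be parallel morphisms in $\E$. Their images $P(u),P(v)\colon P(A)\to P(B)$ are parallel in the poset $\X$, so $P(u)=P(v)=:f$. By the definition of a discrete fibration (the dual of the discrete opfibration condition), for the object $B$ and the morphism $f\colon P(A)\to P(B)$ there is a unique morphism with codomain $B$ lying over $f$. Both $u$ and $v$ are such morphisms, so $u=v$. Hence $\E$ is a preorder.

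\emph{Antisymmetry.} Suppose $u\colon A\to B$ and $w\colon B\to A$ in $\E$. Then $P(A)\le P(B)\le P(A)$ in $\X$, so $P(A)=P(B)$ by antisymmetry in $\X$. Moreover $P(u)$ is then the identity of $P(B)$. Now $\id_B$ and $u$ are both morphisms with codomain $B$ lying over $\id_{P(B)}$. The uniqueness clause says there is exactly one such morphism; since $\id_B$ has domain $B$ and $u$ has domain $A$, this forces $A=B$ (and $u=\id_B$).

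The only subtle point is this last step. The uniqueness in the definition of a discrete fibration must be read as uniqueness of the lift \emph{including its domain}, not merely uniqueness among morphisms with a fixed domain. With that reading, which is the standard one and matches the phrasing ``a unique morphism $\ov{f}^E$'', the argument is immediate. Without it, $\E$ would only be shown to be a preorder whose isomorphic objects lie in the same fibre.
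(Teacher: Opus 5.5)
Your proof is correct and follows essentially the same argument as the paper. Thinness comes from the uniqueness of lifts of $P(x)\le P(y)$, and antisymmetry from the uniqueness of the lift of $\id_{P(y)}$ with codomain $y$, where that uniqueness includes the domain, exactly as you point out; your use of a pair of opposite morphisms in place of the paper's isomorphism $x\cong y$ is an immaterial difference.
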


\begin{proof}
Given two morphisms $f,g\: x\to y$ in $\E$, we have $P(f)=(P(x)\leq P(y))=P(g)$ in $\X$, since $\X$ is a poset. But both $f$ and $g$ are lifts of $P(x) \le P(y)$ and so $f=g$ by the uniqueness of lifts. Thus, $\E$ is a preorder.
Furthermore, if $x \cong y$ then $P(x) = P(y)$ and so by uniqueness of the lift of the identity on $P(y)$, we have $x = y$ and so $\E$ is a poset.
\end{proof}

It will be convenient to consider subcategories of $\Cliff$ with a given semilattice of idempotents.
\begin{definition} 
    Let $L$ be a semilattice. Then $\Cliff_L$ is the subcategory consisting of those Clifford monoids whose semilattice of idempotents is equal to $L$ and those monoid homomorphisms that act as the identity on these idempotents.
\end{definition}

\begin{theorem}\label{thm:group_fibration_over_semilattice}
    Let $L$ be a semilattice. There is an equivalence of categories 
    \[\Cliff_L\simeq \Grpf(\Fib[d]_{L}).\]
\end{theorem}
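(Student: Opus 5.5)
We construct functors in both directions and check that they are mutually quasi-inverse; this is the same as showing that the evident comparison functor is essentially surjective and fully faithful.

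\emph{From Clifford monoids to group objects.} Let $M\in\Cliff_L$, so $E(M)=L$. By \cref{lem:clifford_gives_fibration}, $\eta_M\colon M\to L$, with $M$ ordered by its natural order, is a monoidal discrete fibration; by \cref{lemmapos} its total category is a poset. The fibre over $e\in L$ is $\{x\mid e_x=e\}=G_e$, which is a group with unit $e$ and inverse $x\mapsto x^*$.

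We equip $\eta_M$ with the following fibrewise structure.
\begin{itemize}
\item The multiplication $\otimes\colon M\times_L M\to M$ is the restriction of the monoid multiplication. It lands in the correct fibre because $e_{xy}=e_xe_y=e$ whenever $e_x=e_y=e$.
\item The unit $I\colon L\to M$ is the inclusion $e\mapsto e$.
\item The inversion is $x\mapsto x^*$.
\end{itemize}
Functoriality must be checked.
\begin{itemize}
\item For $\otimes$: if $x\le x'$ and $y\le y'$ with $e_x=e_y=f$, then $xy=x'fy'f=x'y'f$, using that idempotents are central. Hence $xy\le x'y'$.
\item For $I$: this is immediate.
\item For inversion: $x\le x'$ gives $x=x'f$, so $x^*=x'^*f$.
\end{itemize}
The group axioms hold fibrewise because each $G_e$ is a group. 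A homomorphism $h\colon M\to M'$ in $\Cliff_L$ is identity on $L$, so $e_{h(x)}=h(e_x)=e_x$ and $h$ is monotone. Thus $h$ commutes with the projections and preserves the multiplication, unit and (automatically) inverses. This defines a functor $\Phi\colon\Cliff_L\to\Grpf(\Fib[d]_L)$.

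\emph{From group objects to Clifford monoids.} By \thex\ref{theorgrothconstrgrpab}, an object of $\Grpf(\Fib[d]_L)$ is equivalently a functor $G\colon L\op\to\Grp$. As in \cref{propcartmon} and \remx\ref{remexpl}, the Grothendieck construction gives a global strict monoidal structure on $\Groth{G}$, namely
\[(e,g)\cdot(e',g')=\bigl(ee',\,G(ee'\le e)(g)\,G(ee'\le e')(g')\bigr),\]
with unit $(1,1)$.

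We then verify that this monoid is Clifford.
\begin{itemize}
\item Define $(e,g)^*=(e,g^{-1})$. Then $(e,g)(e,g)^*=(e,1)$, which is idempotent, and $(e,g)(e,1)=(e,g)$.
\item Centrality of $(e,1)$ follows from naturality of $G$: $(f,h)(e,1)=(fe,G(fe\le f)h)=(e,1)(f,h)$.
\item The idempotents are exactly the elements $(e,1)$, since an idempotent in a group is its unit. Hence $E(\Groth{G})\cong L$, and we identify them.
\end{itemize}
Natural transformations $G\Rightarrow G'$ induce homomorphisms that fix $L$. This gives $\Psi\colon\Grpf(\Fib[d]_L)\to\Cliff_L$.

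\emph{The two composites.} For $\Psi\Phi$, the Clifford structure theorem recalled in \cref{sec:introduction} states exactly that $M\cong\Psi\Phi(M)$ via $x\mapsto(e_x,x)$. We check that this map is a homomorphism: $xy=(xe_{xy})(ye_{xy})$, and $G_M(e_{xy}\le e_x)(x)=xe_xe_y$. Naturality in $M$ is clear.

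For $\Phi\Psi$, the fibre of $\Psi(G)$ over $e$ is $G(e)$. Restricted to a fibre, the multiplication is that of $G(e)$, since the transition maps are identities. The reindexing along $e\le e'$ is $g'\mapsto(e',g')(e,1)=(e,G(e\le e')g')$, which is precisely the order of \cref{propcartmon}. So $\Phi\Psi(G)\cong G$.

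\emph{Main obstacle.} The main difficulty is bookkeeping: $\Cliff_L$ requires $E(M)$ to be \emph{equal} to $L$, so the reconstruction must identify $E(\Groth{G})$ with $L$ on the nose. We handle this by replacing each $(e,1)$ by $e$, which yields a strictly isomorphic monoid. We must also confirm that the order on the Grothendieck construction agrees with the natural order of the reconstructed monoid. This holds because $(e,g)\le(e',g')$ iff $e\le e'$ and $G(e\le e')(g')=g$, which is equivalent to $(e,g)=(e',g')(e,1)$.
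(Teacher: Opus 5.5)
Your proof is correct, but it is organised differently from the paper's.

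\textbf{The paper's route.} The paper stays on the fibrational side throughout. Via \cref{propcartmon} and \cref{lemmapos}, it reads an object of $\Grpf(\Fib[d]_{L})$ as a monoid $M$ together with a homomorphism $M\to L$ that is a discrete fibration with group fibres. It then shows intrinsically that $M$ is Clifford: the fibrewise inverses give $(-)^*$, and the fibre units are exactly the idempotents, which are central by commutativity of $L$ and the global multiplication. Finally it transports along the bijection between fibre units and $L$, notes that the order is forced, and observes that the morphisms on both sides coincide. In other words, it shows directly that the comparison functor is essentially surjective and fully faithful.

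\textbf{Your route.} You pass through \cref{theorgrothconstrgrpab} to functors $G\colon L\op\to\Grp$ and build $\Psi(G)$ from the explicit product formula on $\Groth{G}$. You then check both composites by element computations. In effect you prove $\Cliff_L\simeq\HomC{\Cat}{L\op}{\Grp}$ by hand, via the reconstruction $x\mapsto(e_x,x)$, and compose with \cref{theorgrothconstrgrpab}.

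\textbf{What each buys.} Your argument is not circular, since you verify the homomorphism property rather than just citing the introduction. It also makes explicit several checks the paper leaves tacit: functoriality of the fibrewise multiplication, unit and inversion; monotonicity of morphisms in $\Cliff_L$; and naturality of the comparison isomorphisms. The cost is that it inverts the paper's logic. In the paper, \cref{cor:clifford_functor} is derived from the present theorem, which is what makes it a new conceptual proof of the classical structure theorem. In your version the classical reconstruction is an ingredient, and that corollary becomes redundant.

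(Minor: centrality of $(e,1)$ comes from the transition maps preserving the identity together with commutativity of $L$, not from naturality of $G$.)
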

\begin{proof}
    Note that by \cref{propcartmon} and \cref{lemmapos}, an object of $\Monf(\Fib[d]_{L})$ is a monoid equipped with a homomorphism into $L$ and ordered so that this map is a discrete fibration. An object of $\Grpf(\Fib[d]_{L})$ has the additional property that the fibres are groups.

    By \cref{lem:clifford_gives_fibration}, every Clifford monoid in $\Cliff_L$ gives such an object of $\Grpf(\Fib[d]_{L})$.

    Conversely, let $h\: M \to L$ be an object of $\Grpf(\Fib[d]_{L})$. Then the group inverses in the fibres define an involution $(-)^*\colon M \to M$ such that $x x^* x = x$ for all $x \in M$. Furthermore, the idempotents $x x^*$ correspond to the unit of the fibres and hence commute with all elements by the commutativity of $L$ and the global multiplication from \cref{propcartmon}. Hence, $M$ is a Clifford monoid.
    
    Moreover, as there is exactly one unit in each fibre, they map bijectively onto $L$, and they are the only idempotents since the fibres are groups.
    Note that by transporting the elements of $L$ along the bijection, we obtain an isomorphic copy of $M$ in which the idempotents are exactly $L$ (and also replacing the other elements if necessary to avoid clashes).
    
    As in the proof of \cref{lem:clifford_gives_fibration} the order on the domain is forced to be the usual order on a Clifford monoid. Thus, up to isomorphism, every object in $\Grpf(\Fib[d]_{L})$ arises from a Clifford monoid.

    Finally, every morphism $f\colon M \to M'$ in $\Cliff_L$ commutes with the maps to $L$ since homomorphisms preserve associated idempotents and the idempotents are fixed by definition. Thus, they coincide with the morphisms in $\Grpf(\Fib[d]_{L})$.
\end{proof}

\begin{remark}
     Despite having that $L$ is a cartesian monoidal poset, we cannot rewrite $\Grpf(\Fib[d]_L)$ as $\Grpf(\Fib[d])_L$, because $L$ is just a strict monoidal category and has in general no structure of group object in $\Cat$. Indeed, Clifford monoids only admit generalised inverses and not actual inverses with respect to the monoid structure.
\end{remark}

The desired result now follows from the monoidal Grothendieck construction.

\begin{corollary}\label{cor:clifford_functor}
    For any semilattice $L$, we have an equivalence of categories
    \[\Cliff_L \simeq \HomC{\Cat}{L\op}{\Grp}.\]
\end{corollary}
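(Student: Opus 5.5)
The plan is to compose the equivalence of \cref{thm:group_fibration_over_semilattice} with the lower right leg of \cref{theorgrothconstrgrpab}. Concretely, \cref{thm:group_fibration_over_semilattice} gives
\[\Cliff_L \simeq \Grpf(\Fib[d]_{L}),\]
and \cref{theorgrothconstrgrpab} (the groups variant of the discrete Grothendieck construction, obtained by applying the 2-functor $\Grpf(-)$ to the equivalence $\Fib[d]_L \simeq \HomC{\Cat}{L\op}{\Set}$) gives
\[\Grpf(\Fib[d]_{L}) \simeq \Grpf(\HomC{\Cat}{L\op}{\Set}).\]
It then remains to identify $\Grpf(\HomC{\Cat}{L\op}{\Set})$ with $\HomC{\Cat}{L\op}{\Grp}$. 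This follows exactly as in \consx\ref{consMonXopSet}: limits in the presheaf category are computed pointwise, so an internal group in $\HomC{\Cat}{L\op}{\Set}$ is a presheaf $F$ with natural multiplication $F\x F\to F$, unit $1\to F$ and inversion $F\to F$ satisfying the group axioms. Checking these axioms componentwise gives a functor $L\op\to\Grp$, and internal group homomorphisms are exactly natural transformations that are componentwise group homomorphisms.

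One point needs care. \cref{theorgrothconstrgrpab} is applied with $\X = L$ regarded merely as a category; there is no monoidal structure on the base in this leg. Since $\Fib[d]_L$ and $\HomC{\Cat}{L\op}{\Set}$ are cartesian monoidal, with products given by pullback over $L$ and pointwise product respectively, and since equivalences preserve finite products, $\Grpf(-)$ transports the equivalence. This is the step I would state explicitly, but it is routine.

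Composing the equivalences yields $\Cliff_L\simeq\HomC{\Cat}{L\op}{\Grp}$. For concreteness, I would also record the explicit functors. In one direction, $M$ is sent to $e\mapsto G_e$, with restriction maps $g\mapsto ge$ for $e\le e'$; this is the reindexing along the discrete fibration $\eta_M$ of \cref{lem:clifford_gives_fibration}, namely the unique lift $a=me$. In the other direction, $G$ is sent to $\Groth{G}$ with the product from \cref{propcartmon} and \cref{remexpl}, which recovers the classical multiplication formula.

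I do not expect a serious obstacle, since all the real work was done in \cref{thm:group_fibration_over_semilattice}. The only thing to double-check is that the fibrewise group structure on $\Grpf(\Fib[d]_L)$ used in that theorem matches the one produced by \cref{theorgrothconstrgrpab}. It does, because both are the fibrewise internal group structure $\E\x[L]\E\to\E$.
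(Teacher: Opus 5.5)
Your proposal is correct and takes the same route as the paper: compose $\Cliff_L \simeq \Grpf(\Fib[d]_{L})$ from \cref{thm:group_fibration_over_semilattice} with the right leg of \cref{theorgrothconstrgrpab}. The only differences are that you also unpack the identification $\Grpf(\HomC{\Cat}{L\op}{\Set})\cong\HomC{\Cat}{L\op}{\Grp}$, which the paper takes as already contained in that theorem, and that you describe the action on objects where the paper records the action on morphisms, $\phi_e(g)=f(g)$.
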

\begin{proof}
    Simply apply the variant of the Grothendieck construction from \cref{theorgrothconstrgrpab}.
    The resulting equivalence is precisely the known correspondence between Clifford monoids with semilattice of idempotents $L$ and contravariant functors from $L$ into $\Grp$.
    On morphisms, the correspondence sends a morphism $f\colon M \to N$ in $\Cliff_L$ to a natural transformation $\phi$ defined by $\phi_e(g) = f(g)$.
\end{proof}

Note that we now can also restrict the equivalences of \cref{thm:group_fibration_over_semilattice,cor:clifford_functor} to the commutative case.

\begin{corollary}\label{cor:commutative_clifford_functor}
    For a semilattice $L$, there is an equivalence of categories
    \[\CInvMon_L\simeq \Abf(\Fib[d]_{L})\simeq \HomC{\Cat}{L\op}{\Ab}.\]
\end{corollary}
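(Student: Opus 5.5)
The plan is to run the proof of \cref{thm:group_fibration_over_semilattice} and \cref{cor:clifford_functor} again with $\Abf$ in place of $\Grpf$, checking at each step that commutativity is preserved and reflected. Here $\CInvMon_L$ denotes the full subcategory of $\Cliff_L$ on the commutative monoids. This is legitimate because a commutative inverse monoid has central idempotents, hence is Clifford. Conversely, every commutative Clifford monoid is a commutative inverse monoid. Since monoid homomorphisms between Clifford monoids automatically preserve inverses, the morphisms agree.

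For the first equivalence $\CInvMon_L\simeq\Abf(\Fib[d]_{L})$, I would restrict the equivalence $\Cliff_L\simeq\Grpf(\Fib[d]_{L})$ of \cref{thm:group_fibration_over_semilattice}. Since $\Abf(\Fib[d]_L)$ is a full subcategory of $\Grpf(\Fib[d]_L)$, it suffices to identify the essential images. A group object in $\Fib[d]_L$ is abelian exactly when each fibrewise multiplication $\otimes_e$ is commutative. So the claim to check is: a Clifford monoid $M$ is commutative if and only if every maximal subgroup $G_e$ is abelian.

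\begin{itemize}
\item \emph{Commutative implies abelian fibres.} This direction is immediate.
\item \emph{Abelian fibres imply commutative.} Use the global multiplication from \cref{propcartmon}: for $x$ over $e$ and $y$ over $e'$,
\[xy=(x\,e')\otimes_{e\wedge e'}(y\,e).\]
Since $L$ is commutative and the fibre over $e\wedge e'$ is abelian, this expression is symmetric in $x$ and $y$, so $xy=yx$.
\end{itemize}

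This identification of the global product with the fibrewise product of restrictions is the only nontrivial point, and it is the step I expect to need the most care. It follows directly from the explicit formula for $\tau_{X,Y}$ in the proof of \cref{propcartmon}, where $F(\pi_1)$ and $F(\pi_2)$ are the restriction maps $x\mapsto xe'$ and $y\mapsto ye$.

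For the second equivalence $\Abf(\Fib[d]_{L})\simeq\HomC{\Cat}{L\op}{\Ab}$, I would apply \cref{theorgrothconstrgrpab} (the $\Abf$ diagram, right leg) with $\X=L$. Alternatively, one can observe that the equivalence $\Grpf(\Fib[d]_L)\simeq\HomC{\Cat}{L\op}{\Grp}$ of \cref{cor:clifford_functor} sends a fibration to its presheaf of fibre groups. It therefore restricts to the full subcategories where all fibres are abelian, and $\HomC{\Cat}{L\op}{\Ab}$ is a full subcategory of $\HomC{\Cat}{L\op}{\Grp}$.

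Composing the two equivalences gives the statement. Explicitly, $M\mapsto(e\mapsto G_e)$, and a morphism $f$ is sent to the natural transformation with components $\phi_e(g)=f(g)$, as in \cref{cor:clifford_functor}.
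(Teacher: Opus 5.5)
Your proposal is correct and takes the paper's approach: the paper gets this corollary by restricting the equivalences of \cref{thm:group_fibration_over_semilattice} and \cref{cor:clifford_functor} to the commutative case, and the right leg of the $\Abf$ diagram in \cref{theorgrothconstrgrpab} gives the second equivalence. Your check that a Clifford monoid is commutative exactly when every $G_e$ is abelian, via $xy=(xe')(ye)$ computed in $G_{e\wedge e'}$, supplies the detail the paper leaves implicit.
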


\begin{remark}
    The construction of a Clifford monoid from a functor $G\colon L\op \to \Grp$ is generalised by the notion of a Płonka sum (see \cite{plonka1967method}). These also correspond to various notions of algebraic Grothendieck construction with the roles of groups or monoids replaced by different algebraic structures.
\end{remark} 

\section{The total category of Clifford monoids}

In the previous section we studied equivalences between Clifford monoids and functors, though in each case we kept the semilattice of idempotents fixed. In this section we remedy this and use the Grothendieck construction to combine each of these categories into one, which we then show is equivalent to the category of Clifford monoids.

Then, restricting ourselves to commutative Clifford monoids, we obtain the structure of a monoidal opfibration. This section thus globally captures (commutative) Clifford monoids via the (monoidal) Grothendieck construction, while the previous section locally captured each (commutative) Clifford monoid via variations of the discrete monoidal Grothendieck construction.   
We start by proving some basic properties of the functor which takes idempotents.

\begin{proposition}\label{prop:both_adjoints}
    The functor $E\colon \Cliff \to \SLat$ is both left and right adjoint to the inclusion from $\SLat$ to $\Cliff$.
\end{proposition}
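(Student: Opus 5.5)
We prove the two adjunctions separately, writing $J\colon \SLat \to \Cliff$ for the inclusion; a semilattice is a Clifford monoid with $x^* = x$. Throughout we use the facts recalled in \cref{sec:introduction}: idempotents are central, $e_x = xx^* = x^*x$, and monoid homomorphisms between Clifford monoids preserve inverses. In particular $e_{xy} = e_x e_y$, since $(xy)(xy)^* = xyy^*x^* = x e_y x^* = e_y xx^* = e_x e_y$.

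\emph{$E \dashv J$ (the reflection).} For the unit we take $\eta_M\colon M \to E(M)$, $x \mapsto e_x$. The identity $e_{xy} = e_x e_y$ together with $e_1 = 1$ shows that $\eta_M$ is a monoid homomorphism. Now let $L$ be a semilattice and $f\colon M \to JL$ a homomorphism. Since $f$ preserves inverses and $L$ has trivial involution,
\[ f(x) = f(x)f(x)^*f(x) = f(x)f(x) f(x) = f(x) = f(x)f(x)^* = f(e_x). \]
So $f = \restr{f}{E(M)} \circ \eta_M$. This factorisation is unique because $\eta_M$ is surjective ($\eta_M(e) = e$ for idempotents $e$). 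Hence $\eta_M$ is a universal arrow, and the resulting left adjoint agrees with $E$ on morphisms, since $\eta_N \circ g = E(g)\circ \eta_M$.

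\emph{$J \dashv E$ (the coreflection).} For the counit we take the inclusion $\varepsilon_M\colon E(M) \hookrightarrow M$, which is a monoid homomorphism. Any homomorphism $g\colon JL \to M$ sends idempotents to idempotents, so it factors uniquely through $E(M)$, and that factorisation is $E(g)$. Uniqueness holds because $\varepsilon_M$ is monic.

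Naturality in both cases is routine. The only step requiring care is checking that $e_x$ is multiplicative, which is exactly where centrality of idempotents (the Clifford condition) is used; everything else is formal.
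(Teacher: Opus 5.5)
Your proof is correct and is essentially the paper's argument. You use the same unit $\eta_M(x)=e_x$, whose multiplicativity comes from centrality of $e_y$, and the same counit, the inclusion $E(M)\hookrightarrow M$; the factorisations are justified exactly as in the paper, by $f(e_x)=f(x)f(x)^*=f(x)$ and by homomorphisms preserving idempotents. Your displayed chain for $f(x)=f(e_x)$ is more roundabout than it needs to be but has the same content, and your explicit uniqueness checks (surjectivity of $\eta_M$, injectivity of $\varepsilon_M$) only spell out what the paper leaves implicit.
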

\begin{proof}
  To show $E$ is a right adjoint we set $\epsilon_M\colon E(M) \to M$ to be the obvious inclusion of idempotents and note that monoid homomorphisms preserve idempotents and thus if $S$ is a semilattice, the map $f\colon S \to M$ factors (uniquely) through $E(M)$ as in the following diagram.
  \[\begin{tikzcd}
	S & M \\
	& {E(M)}
	\arrow["f", from=1-1, to=1-2]
	\arrow[dashed, from=1-1, to=2-2]
	\arrow["{\epsilon_M}"', hook, from=2-2, to=1-2]
  \end{tikzcd}\]
  To see $E$ is a left adjoint we first define $\eta_M\colon M \to E(M)$ by $\eta_M(x) = e_x = xx^*$. This is a monoid homomorphism since $1 = 1 \cdot 1^*$ and $(xy)(xy)^* = xy y^* x^* = x e_y x^* = xx^* e_y = e_x e_y$. Now consider $f\colon M \to S$ where $S$ is a semilattice. We have $f(e_x) = f(xx^*) = f(x)f(x)^* = f(x)$ and so $f$ factors (uniquely) through  $\eta_M$ as in the diagram below.
  \[\begin{tikzcd}
	M & S \\
	{E(M)}
	\arrow["f", from=1-1, to=1-2]
	\arrow["{\eta_M}"', two heads, from=1-1, to=2-1]
	\arrow[dashed, from=2-1, to=1-2]
  \end{tikzcd}\]
  Thus, we have proved the result.
\end{proof}

\begin{corollary}\label{cor:E_fibration}
    The functor $E$ is both a fibration and an opfibration. Explicitly, a cartesian lift of $f\colon L \to E(B)$ is given by the pullback
    \[\begin{tikzcd}
	{L \times_{E(B)} B} & B \\
	L & {E(B)}
	\arrow[from=1-1, to=1-2]
	\arrow[from=1-1, to=2-1]
	\arrow["\lrcorner"{anchor=center, pos=0.125}, draw=none, from=1-1, to=2-2]
	\arrow["{{\eta_B}}", from=1-2, to=2-2]
	\arrow["f"', from=2-1, to=2-2]
    \end{tikzcd}\]
    and an opcartesian lift of $g\colon E(A) \to L$ is given by the pushout
    \[\begin{tikzcd}
	A & {A +_{E(A)} L} \\
	{E(A)} & L \, .
	\arrow[from=1-1, to=1-2]
	\arrow["\lrcorner"{anchor=center, pos=0.125, rotate=-90}, draw=none, from=1-2, to=2-1]
	\arrow["{{\epsilon_A}}", from=2-1, to=1-1]
	\arrow["g"', from=2-1, to=2-2]
	\arrow[from=2-2, to=1-2]
\end{tikzcd}\]
\end{corollary}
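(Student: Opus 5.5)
We must show that $E$ is a fibration with the stated pullback lifts and an opfibration with the stated pushout lifts. The tool is \cref{prop:both_adjoints}: the inclusion $\SLat\hookrightarrow\Cliff$ is fully faithful, and $E$ is left adjoint to it with unit $\eta$ and right adjoint to it with counit $\epsilon$. In both cases $E$ restricted to $\SLat$ is the identity.

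\textbf{Cartesian lifts.} Take $f\colon L\to E(B)$ and form the pullback $P=L\times_{E(B)}B$ in $\Cliff$, with projection $\pi\colon P\to B$.

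The first step is to check that this pullback is a Clifford monoid. Its elements are the pairs $(l,b)$ with $f(l)=e_b$, with componentwise operations and inverse $(l,b)^*=(l,b^*)$. This pair lies in $P$ because $e_{b^*}=e_b$, and the Clifford axioms hold componentwise.

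The second step is to identify $E(P)$ with $L$. An idempotent of $P$ is a pair $(l,b)$ with $b$ idempotent and $f(l)=e_b=b$, so it is exactly $(l,f(l))$. The first projection therefore gives $E(P)\cong L$, and under this identification $E(\pi)=f$.

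The third step is the universal property. Let $g\colon C\to B$ be a morphism and suppose $E(g)=f\circ w$ for some $w\colon E(C)\to L$. Define $C\to L$ as the composite $w\circ\eta_C$. Then $\eta_B\circ g=E(g)\circ\eta_C=f\circ w\circ\eta_C$, where the first equality is naturality of $\eta$. So $g$ and $w\eta_C$ induce a unique map $v\colon C\to P$ with $\pi v=g$. The condition $E(v)=w$ holds because $E$ of a map into a pullback is determined by its components.

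Uniqueness comes from the universal property of the pullback together with one observation: any $v$ with $E(v)=w$ must have first component $w\eta_C$. This follows from naturality of $\eta$ and the fact that $\eta_L=\id$.

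\textbf{Opcartesian lifts.} Take $g\colon E(A)\to L$ and form the pushout $Q=A+_{E(A)}L$ in $\Cliff$. This is the step I expect to be the main obstacle, because pushouts of Clifford monoids are not described as concretely as pullbacks.

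I would first argue that $\Cliff$ has all colimits. It is a variety of unary–binary algebras, since the inverse is determined by the monoid structure and homomorphisms automatically preserve it.

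I would then avoid describing $Q$ elementwise and instead use the fact that $E$ is a left adjoint, so it preserves pushouts. Hence $E(Q)$ is the pushout of $E(A)\leftarrow E(E(A))\to E(L)$. This span is $E(A)\xleftarrow{\id}E(A)\xrightarrow{g}L$, because $E\epsilon_A$ is an isomorphism: the inclusion is fully faithful and $E$ fixes semilattices. The pushout of this span is $L$, with $E$ of the map $A\to Q$ equal to $g$.

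Finally, the opcartesian universal property is the dual of the cartesian argument. Suppose $h\colon A\to D$ and $E(h)=w\circ g$. Pair $h$ with the composite $\epsilon_D\circ w\colon L\to D$. These agree on $E(A)$ by naturality of $\epsilon$, so they induce a unique map $Q\to D$, and this map has the required image under $E$. Uniqueness follows symmetrically from the pushout property and $\epsilon_L=\id$.
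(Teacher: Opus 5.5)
Your proposal is correct and is essentially the paper's argument. The paper cites the general fact that a reflector which preserves pullbacks (here because $E$ is also a right adjoint) is a Street fibration, and then dualises; you unpack that fact directly using naturality of $\eta$ and $\epsilon$ together with $\eta_L=\epsilon_L=\mathrm{id}$. As in the paper, your lifts lie over $f$ only up to the identification $E(P)\cong L$; this is harmless, since one can choose the pullback (resp.\ pushout) so that its idempotents are literally $L$.
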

\begin{proof}
    Since $E$ is a right adjoint, it preserves pullbacks, and a reflection that preserves pullbacks is a Street fibration. (By making an appropriate choice of pullback, we can ensure this is actually a Grothendieck fibration.)
    Dually, $E$ is also an opfibration.
\end{proof}

\begin{remark}
    Note that the pushout required to construct the opcartesian lift can be obtained as a quotient of the product $A \times L$, since the elements of $L$ are idempotent and hence central in the coproduct Clifford monoid $A + L$. Consequently, when restricting to the category of commutative Clifford monoids, the opcartesian lifts remain the same.
\end{remark}

As an aside, we now give a nice semigroup-theoretic way to describe the cartesian morphisms for this fibration. 
\begin{lemma}\label{lem:cartesian_characterisation}
    A morphism $f\colon M \to N$ in $\Cliff$ is cartesian with respect to $E\colon \Cliff \to \SLat$ if and only if it restricts to a group isomorphism $G_e \to G_{f(e)}$ for each idempotent $e \in E(M)$.
\end{lemma}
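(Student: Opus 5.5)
We will test cartesianness of $f$ against the explicit cartesian lift from \cref{cor:E_fibration}. The standard fact we use is this: given any cartesian lift $\bar f\colon P\to N$ of $E(f)\colon E(M)\to E(N)$, the morphism $f$ factors uniquely as $f=\bar f\circ k$ with $E(k)=\id_{E(M)}$. Moreover, $f$ is cartesian if and only if this vertical comparison $k$ is an isomorphism. One direction holds because cartesian lifts of the same base morphism are unique up to a unique vertical isomorphism. The other holds because precomposing a cartesian morphism with an isomorphism yields a cartesian morphism. So the lemma reduces to understanding $k$ concretely.

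By \cref{cor:E_fibration} we may take $P=E(M)\times_{E(N)}N$, the pullback of $\eta_N$ along $E(f)$. Explicitly,
\[P=\{(e,y)\mid e\in E(M),\ y\in N,\ f(e)=e_y\},\]
with componentwise multiplication. The map $\bar f$ is the second projection. Since $f(e_x)=f(x)f(x)^*=e_{f(x)}$, the comparison map is $k(x)=(e_x,f(x))$. It lies over $E(M)$, sending $x\mapsto e_x$ on the first coordinate. The idempotents of $P$ are the pairs $(e,f(e))$, so $E(P)\cong E(M)$ via the first projection. The maximal subgroup of $P$ at $(e,f(e))$ is $\{(e,y)\mid e_y=f(e)\}$, which is canonically $G_{f(e)}$ in $N$.

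Now $k$ respects the partitions of $M$ and $P$ into their maximal subgroups $G_e$ and $\{e\}\times G_{f(e)}$, indexed by $e\in E(M)$, because it commutes with the maps to $E(M)$. On the piece $G_e$, the map $k$ is just $x\mapsto (e,f(x))$, that is, the restriction $f|_{G_e}\colon G_e\to G_{f(e)}$. This restriction is well defined since $e_{f(x)}=f(e_x)=f(e)$, and it is a group homomorphism. Hence $k$ is bijective if and only if each $f|_{G_e}$ is bijective, i.e.\ a group isomorphism.

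Finally, a bijective monoid homomorphism between Clifford monoids is an isomorphism in $\Cliff$: its inverse is automatically a homomorphism, and inverses are preserved automatically. So $k$ is an isomorphism exactly when every $f|_{G_e}$ is, which proves the lemma. The only step needing care is the standard fibration fact in the first paragraph. We would either cite it or verify it directly from the universal property of cartesian morphisms, using the unique factorisations through $\bar f$ and through $f$.
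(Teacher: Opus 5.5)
Your proposal is correct and takes essentially the same route as the paper, which also reduces cartesianness of $f$ to the comparison map $m\mapsto(e_m,f(m))$ into the canonical pullback $E(M)\times_{E(N)}N$ being an isomorphism, and then reads off its injectivity and surjectivity as bijectivity of each $G_e\to G_{f(e)}$. You additionally justify the reduction, which the paper only recalls as ``the naturality square is a pullback''. The point that $E(k)$ is the identity only after identifying $E(P)$ with $E(M)$ via the first projection is harmless.
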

\begin{proof}
Recall that $f\colon M \to N$ is cartesian if and only if the naturality square
\[\begin{tikzcd}
	M & N \\
	{E(M)} & {E(N)}
	\arrow["f", from=1-1, to=1-2]
	\arrow["{\eta_M}"', two heads, from=1-1, to=2-1]
	\arrow["{\eta_N}", two heads, from=1-2, to=2-2]
	\arrow["{E(f)}"', from=2-1, to=2-2]
\end{tikzcd}\]
is a pullback. We can also describe this condition more concretely. Consider the induced map $t$ from $M$ into the canonical pullback monoid $E(M) \times_{E(N)} N = \{ (e,n) \in E(M) \times N \mid f(e) = e_n \}$ as in the following diagram.
\[\begin{tikzcd}
	M && \\
	& {E(M) \times_{E(N)} N} & N \\
	& {E(M)} & {E(N)}
	\arrow["t", dashed, from=1-1, to=2-2]
	\arrow["f", curve={height=-12pt}, from=1-1, to=2-3]
	\arrow["{\eta_M}"', curve={height=12pt}, from=1-1, to=3-2]
	\arrow[from=2-2, to=2-3]
	\arrow[from=2-2, to=3-2]
	\arrow["{{\eta_N}}", two heads, from=2-3, to=3-3]
	\arrow["{{E(f)}}"', from=3-2, to=3-3]
\end{tikzcd}\]
We have that $t(m) = (e_m,f(m))$ should be an isomorphism. Consider $m, m'$ with the same associated idempotent. Injectivity of $t$ means that $f(m) = f(m')$ implies $m = m'$. Moreover, take $e \in E(M)$ and consider $n \in N$ with idempotent $f(e)$. Then surjectivity of $t$ means that there is an $m$ with idempotent $e$ such that $f(m) = n$. In other words, $f$ is cartesian if and only if it restricts to a bijection $G_e \to G_{f(e)}$ for each $e \in E(M)$.
\end{proof}

\begin{theorem}\label{thm:main_equivalence}
    Consider the functor $\HomC{\Cat}{(-)\op}{\Grp}\colon \SLat\op \to \Cat$. There is an equivalence of categories 
    \[\Groth{\HomC{\Cat}{(-)\op}{\Grp}} \simeq \Cliff.\]
\end{theorem}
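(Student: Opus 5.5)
Since $E\colon \Cliff \to \SLat$ is a fibration (\cref{cor:E_fibration}), the Grothendieck equivalence $\ICat\simeq\Fib$ gives $\Cliff \simeq \Groth{\Phi}$, where $\Phi\colon \SLat\op\to\Cat$ is the indexed category of fibres of $E$, with reindexing by pullback. So I will identify $\Phi$ with $\HomC{\Cat}{(-)\op}{\Grp}$, up to pseudonatural equivalence; since the Grothendieck construction is a 2-functor, equivalent indexed categories have equivalent total categories over $\SLat$. The functor $\HomC{\Cat}{(-)\op}{\Grp}$ sends $f\colon L\to L'$ to precomposition $f^*\colon \HomC{\Cat}{L'^{\op}}{\Grp}\to\HomC{\Cat}{L\op}{\Grp}$, $G\mapsto G\circ f\op$. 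I will use the cloven fibre of $E$ over $L$: objects $M$ with an isomorphism $E(M)\iso L$, or simply $\Cliff_L$ after transport of structure along such isomorphisms. The fibre of $E$ over $L$ (morphisms over $\id_L$) is equivalent to $\Cliff_L$, and \cref{cor:clifford_functor} gives $\Cliff_L\simeq\HomC{\Cat}{L\op}{\Grp}$.

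The key step is to check that these fibrewise equivalences are pseudonatural in $L$, i.e.\ that reindexing along $f\colon L\to L'$ corresponds to precomposition. Take $N\in\Cliff_{L'}$ with associated functor $G_N\colon L'^{\op}\to\Grp$, $G_N(e')=G_{e'}$. By \cref{cor:E_fibration}, the reindexed object is the pullback $L\times_{L'}N=\{(e,n)\mid f(e)=e_n\}$. Its idempotents are the pairs $(e,f(e))$, identified with $L$. The group over $e$ is $\{(e,n)\mid e_n=f(e)\}\iso G_{f(e)}=G_N(f(e))$. The restriction maps are computed by multiplying with $(e_0,f(e_0))$ for $e_0\le e$, which gives $n\mapsto n f(e_0)$, exactly $G_N(f(e_0)\le f(e))$. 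Hence $G_{L\times_{L'}N}\iso G_N\circ f\op$, naturally in $N$. I will also check this against \cref{lem:cartesian_characterisation}: the projection is cartesian precisely because it is a groupwise isomorphism.

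The remaining step is coherence: the comparison isomorphisms must satisfy the pseudonaturality axioms with respect to the compositors of pullback reindexing. Because all base categories are posets and the groups are determined up to canonical isomorphism, each comparison is the unique isomorphism induced by universal properties of pullbacks, so the coherence diagrams commute automatically. I expect this bookkeeping, namely choosing cleavages and handling the fact that $\Cliff_L$ uses \emph{equal} rather than isomorphic idempotents, to be the main obstacle. I would handle it by working with an isomorphism-closed version of the fibre, where objects are pairs $(M,\,E(M)\iso L)$, and then transporting the structure.

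Alternatively, I could write the functor $\Groth{\HomC{\Cat}{(-)\op}{\Grp}}\to\Cliff$ down explicitly. It sends $(L,G)$ to the Clifford monoid of pairs $(e,g)$ with $g\in G(e)$ from \cref{cor:clifford_functor}. A morphism $(f,\alpha)\colon (L,G)\to(L',G')$, with $\alpha\colon G\Rightarrow G'\circ f\op$, goes to $(e,g)\mapsto (f(e),\alpha_e(g))$. I would then verify directly that this map is a monoid homomorphism, using naturality of $\alpha$ and that $f$ preserves meets. Full faithfulness follows because any homomorphism $h$ restricts to $E(h)$ on idempotents and to group maps $G_e\to G_{h(e)}$ natural in $e$. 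Essential surjectivity is the classical structure theorem. I would present this explicit version as the actual proof and use the fibration argument above as the conceptual justification.
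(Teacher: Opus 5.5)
Your fibrational argument is the paper's own proof. The paper also identifies the indexed category of fibres of $E$, reindexed by pullback, with $\HomC{\Cat}{(-)\op}{\Grp}$ via \cref{cor:clifford_functor}, and chases an object around the square. Your computation of the idempotents $(e,f(e))$, the groups $\{(e,n)\mid e_n=f(e)\}\cong G_{f(e)}$ and the restriction maps $n\mapsto nf(e_0)$ is exactly the chase the paper leaves to the reader.

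Two small corrections on that part. First, the strict fibre of $E$ over $L$ is literally $\Cliff_L$. Choosing the pullback so that its idempotents are exactly $L$, as in \cref{cor:E_fibration}, makes reindexing land there, so no isomorphism-closed variant is needed. Second, coherence is automatic not because the base categories are posets ($\SLat$ is not one). It holds because every structure map involved just discards coordinates from $L$, so both sides of each coherence diagram send an element to its underlying element of $N$.

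The explicit route you say you would actually write up is genuinely different. Instead of an equivalence of indexed categories plus 2-functoriality of the Grothendieck construction, you verify directly that $(L,G)\mapsto\Groth{G}$, $(f,\alpha)\mapsto\bigl((e,g)\mapsto(f(e),\alpha_e(g))\bigr)$ is fully faithful and essentially surjective. The paper only records this functor in the Remark after the theorem. Your checks go through:
\begin{itemize}
\item the homomorphism property uses naturality of $\alpha$ together with $f$ preserving meets and the top;
\item fullness uses $h(x)h(x)^*=h(xx^*)$, so $h$ maps $G_e$ into $G_{E(h)(e)}$, with naturality of these group maps coming from $h(ge_0)=h(g)h(e_0)$;
\item essential surjectivity is the classical reconstruction.
\end{itemize}

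What each route buys: yours is more elementary and sidesteps all cleavage and coherence bookkeeping. Since it commutes with the projections to $\SLat$ up to the natural isomorphism $e\mapsto(e,1)$, it also automatically preserves cartesian morphisms, so the identification of pullback with precomposition is recovered a posteriori. The paper's route produces the equivalence as one of indexed categories from the start. That is the form it reuses when passing to the opfibration and the monoidal structure on $\CInvMon$.
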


\begin{proof}
    It suffices to show that the inverse of the Grothendieck construction applied to the fibration $E\:\Cliff\to\SLat$ yields a pseudofunctor which is pseudonaturally isomorphic to $\HomC{\Cat}{(-)\op}{\Grp}$.

   Let us write $\Cliff_{(-)}\colon \SLat\op \to \Cat$ for the pseudofunctor associated to $E$.
   On objects this agrees with our previous definition of $\Cliff_L$. On morphisms it takes cartesian lifts. Let $f\colon L \to K$, then $\Cliff_f \colon \Cliff_K \to \Cliff_L$ must send a Clifford monoid $M$ to a Clifford monoid with idempotents $L$. By \cref{cor:E_fibration} this can be constructed as the pullback of $f$ and $\eta_M$ which yields, essentially, $\{(\ell,x) \in L \times M \mid xx^* = f(\ell)\}$ with componentwise multiplication.

   Applying \cref{cor:clifford_functor} we have that $\Cliff_L \simeq \HomC{\Cat}{L\op}{\Grp}$. All that remains is to show that this is a pseudonatural equivalence.
   \begin{cd}
    {\Cliff_K} \& {\HomC{\Cat}{K\op}{\Grp}} \\
	{\Cliff_L} \& {\HomC{\Cat}{L\op}{\Grp}}
	\arrow["{\simeq}", from=1-1, to=1-2]
	\arrow["{\Cliff_f}"', from=1-1, to=2-1]
	\arrow["{\HomC{\Cat}{f}{\Grp}}", from=1-2, to=2-2]
	\arrow["{\simeq}"', from=2-1, to=2-2]
\end{cd}
By chasing a monoid $M \in \Cliff_K$ around the diagram for each $f\colon L \to K$ we can easily obtain the desired isomorphisms, which are also quickly seen to be natural. Finally, checking that the necessary coherence laws hold is routine and unenlightening.
\end{proof}

\begin{remark}
    The objects of $\Groth{\HomC{\Cat}{(-)\op}{\Grp}}$ are pairs $(L,F)$ where $L$ is a meet-semilattice and $F\: L\op \to \Grp$. Its morphisms are pairs $(t,\alpha)\: (L,F) \to (K,G)$ where $t\: L \to K$ and $\alpha\: F \to G\circ t$ is a natural transformation. Note that this description is identical to the category described in \cite{pasku2011clifford}.
    
    The equivalence $S\: \Groth{\HomC{\Cat}{(-)\op}{\Grp}} \to \Cliff$ sends the object $(L,F)$ to Clifford monoid $\Groth{F}$ and the morphism $(t,\alpha) \: (L,F) \to (K,G)$ to the homomorphism $S(t,\alpha) \: \Groth{F} \to \Groth{G}$ with $S(t,\alpha)(e,x) = (t(e),\alpha_{e}(x))$.
\end{remark}

\begin{remark}
The results above are expected because in the last section we proved that Clifford monoids are the same as discrete fibrations in groups over semilattices. And, at least in the ordinary non-monoidal case, it is known that collecting together discrete fibrations forms a Grothendieck bifibration $\cod\:\Fib[d]\to \Cat_{\opn{s}}$, where $\Cat_{\opn{s}}$ is the 1-category of small categories and functors between them. Indeed, this is the Grothendieck construction of
\begin{fun}
	\Fib[d]_{(-)} & \: & \Cat_{\opn{s}}\op & \too & \Cat \\[1ex]
    && \X & \mto & \Fib[d]_{\X}\simeq \HomC{\Cat}{\X\op}{\Set} \\[1ex]
    && \fib[o]{\X}{F}{\Y} & \mto & \fib{\HomC{\Cat}{\X\op}{\Set}}{(-) \c F\op}{\HomC{\Cat}{\Y\op}{\Set}}
\end{fun}
Notice that the equivalences $\Fib[d]_{\X}\simeq \HomC{\Cat}{\X\op}{\Set}$ are pseudonatural in $\X$, as proved in \cite{CavMes24}. 
The fact that all the reindexing functors have a left adjoint
\begin{cd}[3.5][7]
    \HomC{\Cat}{\Y\op}{\Set}\ar[r,bend left=18,"{\Lan{F\op}}",""{name = 1}]\&  \HomC{\Cat}{\X\op}{\Set} \ar[l,bend left=18,"{(-)\c F\op}",""{name = 2}]
    \ar[from=1,to=2,adj]
\end{cd}
given by the left Kan extension ensures that $\cod\:\Fib[d]\to \Cat_{\opn{s}}$ is not only a fibration but also an opfibration, and thus a bifibration.
\end{remark}

We can also use the opfibrational structure of $E\: \Cliff \to \SLat$ to obtain a representation of the category of Clifford monoids. In fact, it is this representation that behaves better for the applications to inverse semirings that we present in Section \ref{secinvsemirings}.
From the theory of bifibrations, the pseudofunctor corresponding to this opfibration is given by taking the adjoint of the left action of $\HomC{\Cat}{-}{\Grp}$ on morphisms, which involves taking left Kan extensions. We might call this dual pseudofunctor ${\Cliff}_{(-)}^{\,\,*}\colon \SLat \to \Cat$.

We will be particularly interested in this construction in the case of \emph{commutative} Clifford monoids. In this setting, we can also recover the monoidal structure from another application of the monoidal Grothendieck construction.

\begin{theorem}\label{theorEismonoidalopfib}
    The opfibration $E\:\CInvMon\to \SLat$ is equivalent over $\SLat$ to a monoidal opfibration, i.e.\ an object of $\PsMon(\Fib[o])_{\SLat}$.
\end{theorem}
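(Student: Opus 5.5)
The plan is to verify the two conditions in \cref{defmonoidalfibration} (dualised to opfibrations) for $E\colon \CInvMon \to \SLat$. Both categories carry the tensor products from the preliminaries. Equivalently, I will exhibit $E$ as a functor between monoidal categories that is strict monoidal, and whose tensor product on the total category preserves opcartesian morphisms. The phrase ``equivalent over $\SLat$'' is there because $E$ will only be \emph{strong} monoidal on the nose. I will therefore first prove strong monoidality and preservation of opcartesian maps, and then strictify.

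\textbf{Step 1: $E$ is strong monoidal.} By \cref{prop:both_adjoints}, $E$ is left adjoint to the inclusion $\SLat \hookrightarrow \CInvMon$. For a semilattice $S$, monoid maps $E(A\otimes B)\to S$ correspond to homomorphisms $A\otimes B\to S$, hence to bihomomorphisms $A\times B\to S$. A bihomomorphism into a semilattice factors uniquely through $\eta_A\times\eta_B$, by applying the argument of \cref{prop:both_adjoints} in each variable separately. So these also correspond to bihomomorphisms $E(A)\times E(B)\to S$, i.e.\ to maps $E(A)\otimes E(B)\to S$. Yoneda then gives $E(A\otimes B)\cong E(A)\otimes E(B)$, naturally. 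Similarly $E(\Z_0)$ is the two-element semilattice, which is the free semilattice on one generator and hence the unit of $\SLat$. The coherence axioms follow from the uniqueness clauses of these universal properties.

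\textbf{Step 2: $\otimes$ preserves opcartesian maps.} By \cref{cor:E_fibration}, $A\to A'$ is opcartesian precisely when $A'\cong A+_{E(A)}E(A')$, with $E(A')\to A'$ the inclusion. Given opcartesian $A\to A'$ and $B\to B'$, I must show that $A'\otimes B'$ is the pushout $(A\otimes B)+_{E(A\otimes B)}(E(A')\otimes E(B'))$. I would check this directly through the universal property using bihomomorphisms. A bihomomorphism $A'\times B'\to C$ is, in each variable, the same as a bihomomorphism on $A\times B$ together with compatible data on the idempotent parts. I expect this to be the main obstacle, since it requires care about how idempotents sit inside $L\otimes B$ for a semilattice $L$. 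If the direct check gets messy, I would fall back on the fact that $\otimes$ is cocontinuous in each variable, because $\CInvMon$ is closed via bihomomorphism hom-objects. That reduces the claim to pasting pushout squares together with the Step 1 isomorphism $E(A\otimes B)\cong E(A)\otimes E(B)$.

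\textbf{Step 3: strictify over the base.} $E$ is a fibration by \cref{cor:E_fibration}, so isomorphisms in $\SLat$ lift to isomorphisms in $\CInvMon$. I will use this to redefine the tensor: $A\otimes' B$ is the codomain of the lift of the iso $E(A\otimes B)\cong E(A)\otimes E(B)$, and the unit is chosen analogously. This transports the monoidal structure so that $E$ becomes strict monoidal on the nose. The identity functor then gives a monoidal equivalence between the old and new structures over $\SLat$. Opcartesian maps are stable under composition with isomorphisms, so Step 2 transfers to $\otimes'$. The result is an object of $\PsMon(\Fib[o])_{\SLat}$ equivalent to $E$.
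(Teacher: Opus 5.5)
Your proposal is correct and follows essentially the paper's route. The decomposition is the same: $E$ is strong monoidal, the tensor on $\CInvMon$ is strictified so that $E$ is strict, and stability of opcartesian maps comes from tensoring the pushout $A+_{E(A)}L$ with $B$, using that $\otimes$ preserves colimits. Your universal-property, pasting and iso-lifting arguments stand in for the paper's presentation-based and elementwise ones.

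Step 2 is the part you leave least developed, and your fallback is the right route. Reduce to one variable at a time, then show that $\epsilon_A\otimes B$ factors through $\epsilon_{A\otimes B}$ via an isomorphism $E(A)\otimes B\cong E(A\otimes B)$; this is the paper's identity $e\otimes b=e\otimes e_b$. That isomorphism follows from your Step 1 natural isomorphism applied to the pair $(E(A),B)$, because $E(A)\otimes B$ is already a semilattice.
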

\begin{proof}
We have already seen that $E\colon \Cliff \to \SLat$ is an opfibration in \cref{cor:E_fibration}. The same proof shows this for the restriction to $\CInvMon$.

The functor can be seen to be strong monoidal by a simple calculation. Note that the kernel equivalence relation of $\eta_A\colon A \twoheadrightarrow E(A)$ is generated by $x \sim x + x$. Recall that the tensor product $A \otimes B$ has a presentation $\langle a \otimes b \mid 0 \otimes b = 0, (a + a') \otimes b = a \otimes b + a' \otimes b, a \otimes 0 = 0, a \otimes (b + b') = a \otimes b + a \otimes b'\rangle$ and that this can be computed on a presentation of $A$ and $B$. Doing this for $E(A) \otimes E(B)$ and using that idempotents are closed under sums gives the same presentation as $E(A \otimes B)$. The isomorphism sends $e \otimes e'$ in $E(A) \otimes E(B)$ to itself in $E(A \otimes B)$ in the obvious way.

The known theory of monoidal opfibrations actually requires the functors to be \emph{strict} monoidal. A more convenient version of the theory is possible by taking a weaker notion of morphism of fibrations. Nonetheless, we can choose the tensor product on $\CInvMon$ so that $E$ is actually strict monoidal by defining $a \otimes b$ (and products of these) for idempotents to agree with tensors of the semilattices and building the rest of the tensor product around these.

It remains to show that opcartesian morphisms are stable under tensor product. By functoriality of $\otimes$ it suffices to show this for when one of the morphisms is the identity and by symmetry we may assume it to be the second one. Note that opcartesian lifts are computed by pushouts as in the following diagram.
\[\begin{tikzcd}
    A & {A +_{E(A)} L} \\
    {E(A)} & L
    \arrow[from=1-1, to=1-2]
    \arrow["\lrcorner"{anchor=center, pos=0.125, rotate=-90}, draw=none, from=1-2, to=2-1]
    \arrow["{\epsilon_A}", from=2-1, to=1-1]
    \arrow["f"', from=2-1, to=2-2]
    \arrow[from=2-2, to=1-2]
\end{tikzcd}\]
Tensoring with $B$ and using that this preserves colimits gives $A \otimes B +_{E(A) \otimes B} L \otimes B$. We want this to agree with $A \otimes B +_{E(A \otimes B)} L \otimes B \cong A \otimes B +_{E(A) \otimes E(B)} L \otimes B$. Both of these are given by a quotient of the coproduct $(A \otimes B) \times (L \otimes B)$. In the former case, the congruence is generated by $(e \otimes b, 0) \sim (0, f(e) \otimes b)$ for $e \in E(A)$ and $b \in B$. In the latter case, it is generated by $(e \otimes e', 0) \sim (0, f(e) \otimes e')$ for $e \in E(A)$ and $e' \in E(B)$. But $e \otimes b = (e-e) \otimes b = e \otimes b - e \otimes b = e \otimes (b-b) = e \otimes e_b$ and so these agree.
\end{proof}

\begin{theorem}\label{thmtotalccliff}
    The pseudofunctor
    \begin{fun}
	\CInvMon_{(-)}^* & \: & \SLat & \too & \Cat \\[1ex]
    && L & \mto & \CInvMon_L\simeq \HomC{\Cat}{L\op}{\Ab} \\[1ex]
    && \fib{L}{F}{L'} & \mto & \fib{\HomC{\Cat}{L\op}{\Ab}}{\Lan{F\op}}{\HomC{\Cat}{L'\op}{\Ab}}
\end{fun}
    extends to a weakly lax monoidal pseudofunctor, and 
    $E\:\CInvMon\to \SLat$ is equivalent over $\SLat$ to its monoidal Grothendieck construction.
\end{theorem}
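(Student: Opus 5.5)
The plan is to deduce the statement from \cref{theorEismonoidalopfib} together with the (opfibrational version of the) monoidal Grothendieck construction of \cite{MoeVas20}. That construction gives a 2-equivalence between $\PsMon(\Fib[o])_{\X}$ and the 2-category of weakly lax monoidal pseudofunctors $\X\to\Cat$, the left leg of the diagram in \cref{secvarmongrconstr} with the variance reversed. By \cref{theorEismonoidalopfib}, $E\colon\CInvMon\to\SLat$ is equivalent over $\SLat$ to an object of $\PsMon(\Fib[o])_{\SLat}$. Applying the quasi-inverse therefore yields a weakly lax monoidal pseudofunctor $\Phi\colon\SLat\to\Cat$ whose monoidal Grothendieck construction is equivalent to $E$ over $\SLat$. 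The underlying pseudofunctor of $\Phi$ is the opindexed category of fibres of $E$: it sends $L$ to $\CInvMon_L$ and $f\colon L\to L'$ to pushout along $f$, as in \cref{cor:E_fibration}.

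The second step is to identify $\Phi$ with $\CInvMon_{(-)}^*$ as pseudofunctors. On objects, \cref{cor:commutative_clifford_functor} gives $\CInvMon_L\simeq\HomC{\Cat}{L\op}{\Ab}$. For the action on morphisms, I would use the remark after \cref{prop:both_adjoints}: for a bifibration, the opcartesian reindexing $f_!$ is left adjoint to the cartesian reindexing $f^*$. Transporting $f^*$ along the equivalences of \cref{cor:commutative_clifford_functor} gives precomposition with $f\op$, exactly as in the proof of \cref{thm:main_equivalence} restricted to the commutative case. Its left adjoint is therefore $\Lan{f\op}$, since adjoints are unique up to unique isomorphism. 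Uniqueness of adjoints also makes the mates of the pseudonatural equivalence for $(-)^*$ into a pseudonatural equivalence $\Phi\simeq\CInvMon_{(-)}^*$, and mates respect the unit and composition coherence. I would then transport the weakly lax monoidal structure of $\Phi$ along this pseudonatural equivalence; that weakly lax monoidal structures can be transported along pseudonatural equivalences is a standard fact in the tricategory $\MonTwoCatps$. Because the monoidal Grothendieck construction is an equivalence of 2-categories, it takes equivalent inputs to equivalent outputs, so the Grothendieck construction of $\CInvMon_{(-)}^*$ is again equivalent to $E$ over $\SLat$.

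The third step is to make the lax structure explicit, as a sanity check and a description. For $F\in\HomC{\Cat}{L\op}{\Ab}$ and $G\in\HomC{\Cat}{K\op}{\Ab}$, the structure map $\CInvMon_L\times\CInvMon_K\to\CInvMon_{L\otimes K}$ should be $(A,B)\mapsto A\otimes B$, which lies over $L\otimes K$ because $E$ is strict monoidal. The unit should be the fibre element $\Z_0$ over $E(\Z_0)$. Under \cref{cor:commutative_clifford_functor}, the tensor map is the external tensor $F\boxtimes G$ on $L\op\times K\op$ followed by left Kan extension along the canonical map $L\times K\to L\otimes K$ of semilattices.

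I expect the main obstacle to be the coherence. The monoidal structure on $E$ is only made strict by a choice of tensor, and the resulting object is only a weakly lax (not strict) pseudofunctor. The delicate part is showing that the pseudonatural equivalence with $\CInvMon_{(-)}^*$ is compatible with the fibrewise tensor, so that the transported structure is genuinely weakly lax monoidal. I would first try to avoid checking this by hand, relying on transport of structure along an equivalence in the hom-2-category $\HomC{\MonTwoCatps}{\SLat}{\Cat}$. Only if that proves insufficient would I verify the compatibility of the mate isomorphisms with the tensor explicitly, using that $\Lan{}$ commutes with colimits and hence with $\otimes$ of abelian-group-valued presheaves.
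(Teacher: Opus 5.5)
Your proposal is correct and uses the same ingredients as the paper: \cref{theorEismonoidalopfib}, the equivalence $\PsMon(\Fib[o])_{\SLat}\simeq \HomC{\MonTwoCatps}{\SLat}{\Cat}$, and the commutative version of \cref{thm:main_equivalence} to identify the underlying pseudofunctor with $\CInvMon_{(-)}^*$. The differences are only in order and detail: the paper first identifies $E$ with $\Groth{\CInvMon_{(-)}^*}$ over $\SLat$ and then transports the monoidal opfibration structure, whereas you transport the weakly lax structure along a pseudonatural equivalence afterwards, and you spell out the mates argument yielding $\Lan{f\op}$ as reindexing, which the paper leaves implicit.
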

\begin{proof}
    Note that \cref{thm:main_equivalence} easily restricts to the commutative case and so we have that $E\:\CInvMon\to \SLat$ is equivalent over $\SLat$ to the Grothendieck construction of $\CInvMon_{(-)}^*$.
    Then, by the monoidal Grothendieck construction, there is an equivalence of categories
    $$\PsMon(\Fib[o])_{\SLat}\simeq \HomC{\MonTwoCatps}{\SLat}{\Cat}$$
    So the structure of monoidal opfibration constructed in \thex\ref{theorEismonoidalopfib} is transported into a structure of weakly lax monoidal pseudofunctor.
\end{proof}

\begin{cons}\label{consweaklylaxmonpseudofunstruct}
    We can extract from \cref{thmtotalccliff} the explicit structure of weakly lax monoidal pseudofunctor of $\CInvMon_{(-)}^*$. The structure pseudonatural transformation regulating the image of the tensor product
    \begin{cd}[6][19] 
        {\SLat\times \SLat} \& {\Cat\times \Cat} \\
	\SLat \& \Cat
	\arrow["{\CInvMon_{(-)}^* \times \CInvMon_{(-)}^*}", from=1-1, to=1-2]
	\arrow["\otimes"', from=1-1, to=2-1]
	\arrow["\phi", Rightarrow, from=1-2, to=2-1, shorten <=4ex, shorten >=4ex]
	\arrow["\times", from=1-2, to=2-2]
	\arrow["{\CInvMon_{(-)}^*}"', from=2-1, to=2-2]
    \end{cd}
    has component on $(L,L')$ given by the functor
    $$\phi_{L,L'}\:\HomC{\Cat}{L\op}{\Ab}\times \HomC{\Cat}{{L'}\op}{\Ab}\to \HomC{\Cat}{(L\otimes L')\op}{\Ab}$$
    that sends $(F\:L\op\to \Ab,\, G\:{L'}\op\to \Ab)$ to the functor $(L\otimes L')\op\to \Ab$ that corresponds to the global tensor product (with respect to the structure of monoidal opfibration) of the commutative inverse monoids associated to $F$ and $G$. It is straightforward to see
    that the obtained functor $(L\otimes L')\op\to \Ab$ is precisely the functor induced by the tensor product of $\SLat$ as in the following diagram
    \begin{cd}[5.5][6]
        {(L\times L')}\op \& {\Ab\times \Ab} \\
	{(L\otimes L')}\op \& \Ab
	\arrow["{F\x G}", from=1-1, to=1-2]
	\arrow[from=1-1, to=2-1]
	\arrow["{\otimes_{\Ab}}", from=1-2, to=2-2]
	\arrow["{\phi_{L,L'}(F,G)}"', dashed, from=2-1, to=2-2]
    \end{cd}

    The structure pseudonatural transformation regulating the image of the unit
    \begin{cd}[5.5][6]
        \1 \& \\
	\SLat \& \Cat
	\arrow["I"', from=1-1, to=2-1]
	\arrow[""{name=0, anchor=center, inner sep=0}, "1", from=1-1, to=2-2]
	\arrow["{\CInvMon_{(-)}^*}"', from=2-1, to=2-2]
	\arrow["\iota", Rightarrow, from=0, to=2-1,shorten <=0.5ex,shorten >=0.5ex]
    \end{cd}
    is given by the functor
    $$\iota\:1\to \HomC{\Cat}{{I}\op}{\Ab}$$
    that picks the functor $\mathbbm{2}\op\to \Ab$ associated with the commutative inverse monoid $\Z_0$, which is the global unit in $\CInvMon$.
\end{cons}

\section{Factorisation systems for Clifford monoids}

The fibration $\Cliff \to \SLat$ can be leveraged to immediately deduce structural characteristics of the category $\Cliff$. One example of this is how we can use it to define various \emph{factorisation systems}.

\begin{definition}
    A factorisation system on a category $\C$ is a pair $(\E,\M)$ of classes of morphisms such that
    \begin{enumerate}
        \item every morphism $f$ in $\C$ factorises as $me$ where $m \in \M$ and $e \in \E$ uniquely up to unique isomorphism,
        \item the classes $\E$ and $\M$ are each closed under composition and contain all isomorphisms.
    \end{enumerate}
\end{definition}

Classic factorisation systems in algebra include those where the left class is the surjective homomorphisms (regular epimorphisms) and the right class is the injective homomorphisms (monomorphisms). Every category also admits two trivial factorisation systems $(\mathrm{Iso},\mathrm{All})$ and $(\mathrm{All},\mathrm{Iso})$ where one class consists only of isomorphisms and the other contains all morphisms.

(Op)fibrations allow us to lift factorisation systems from the base category to the total category (see \cite[\S 21]{joyofcats}). Given a factorisation system $(\E,\M)$ on $\SLat$, using that $E\colon \Cliff \to \SLat$ is an opfibration, we can induce a factorisation system $(\E',\M')$ on $\Cliff$ where $\E'$ consists of the opcartesian morphisms lying over morphisms in $\E$ and $\M'$ consists of the morphisms lying over morphisms in $\M$. Moreover, using that $E$ is a fibration, we obtain a factorisation system $(\E'', \M'')$ where $\E''$ is the class of morphisms lying over $\E$ and $\M''$ is the class of cartesian morphisms lying over $\M$.

Starting with the trivial class $(\mathrm{All}, \mathrm{Iso})$ the first construction gives a factorisation system where the right class is the homomorphisms that induce isomorphism on the semilattices of idempotents and the left class is the morphisms $f\colon M \to N$ such that the naturality square
\[\begin{tikzcd}
	M & N \\
	{E(M)} & {E(N)}
	\arrow["f", from=1-1, to=1-2]
	\arrow["{\epsilon_M}", hook, from=2-1, to=1-1]
	\arrow["{E(f)}"', from=2-1, to=2-2]
	\arrow["{\epsilon_N}"', hook, from=2-2, to=1-2]
\end{tikzcd}\]
is a pushout.

Starting with $(\mathrm{Iso}, \mathrm{All})$, the second construction gives a factorisation system where now the left class is given by the homomorphisms which induce isomorphisms on the semilattices of idempotents and, by \cref{lem:cartesian_characterisation}, the right class consists of morphisms $f\colon M \to N$ such that $f$ restricts to group isomorphisms $G_e \to G_{f(e)}$ for each $e \in E(M)$.

If we instead start with the $(\mathrm{RegEpi}, \mathrm{Mono})$-factorisation system on $\SLat$, the first construction now gives a factorisation system where the right class consists of the morphisms which are injective on idempotents. This is an important class of morphisms in semigroup theory --- the \emph{idempotent-separating morphisms}. The left class is those morphisms $f\colon M \to N$ such that $E(f)$ is surjective and the square
\[\begin{tikzcd}
	M & N \\
	{E(M)} & {E(N)}
	\arrow["f", from=1-1, to=1-2]
	\arrow["{\epsilon_M}", hook, from=2-1, to=1-1]
	\arrow["{E(f)}"', from=2-1, to=2-2]
	\arrow["{\epsilon_N}"', hook, from=2-2, to=1-2]
\end{tikzcd}\]
is a pushout. This means that $f$ itself is also a surjection and the quotient of the idempotents induces the quotient $f$. In other words, the left class consists of the quotients whose kernel equivalence relations are generated by pairs of idempotents. (In fact, there is a similar factorisation system on the entire category of inverse monoids, but we omit the details.)

Finally, applying the second construction we obtain a factorisation system where the left class is those morphisms which are surjective on idempotents and the right class is the injections which induce group isomorphisms $G_e \to G_{f(e)}$ for each idempotent $e$.

\section{Monoids and the monoidal Grothendieck construction}\label{sec:monoids_in_monoidal_fibrations}

In this section, we present a new result about the monoidal Grothendieck construction. We prove that the category of monoids in the total category of a monoidal opfibration is given by the Grothendieck construction of a pseudofunctor that takes monoids in the fibres. The proof gives an explicit construction for endowing the relevant fibres with a natural monoidal structure. As this is a general result about monoidal opfibrations, we expect it will have utility beyond just the theory of Clifford monoids.

First, recall the following known proposition (for example, see \cite[Proposition 2.6]{MoeVas20}). Here we are taking pseudofunctors and pseudonatural transformations, rather than the strict versions, but nothing changes since the domain is $\1$.

\begin{proposition}
    The 2-category of internal pseudo-monoids in a monoidal 2-category $\K$ is biequivalent to the 2-category $\HomC{\MonTwoCatps}{\1}{\K}$ of weakly lax monoidal pseudofunctors from $\1$ to $\K$, weakly monoidal pseudonatural transformations and monoidal modifications.
\end{proposition}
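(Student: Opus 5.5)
The plan is to build explicit mutually inverse constructions between pseudo-monoids in $\K$ and weakly lax monoidal pseudofunctors $\1 \to \K$, and then check that these extend to 1-cells and 2-cells. The terminal 2-category $\1$ is regarded with its trivial (strict) monoidal structure. A pseudofunctor $F\colon \1 \to \K$ therefore consists of an object $M = F(\ast)$, an invertible 2-cell $\iota\colon \id_M \Rightarrow F(\id_\ast)$, and a composition constraint. Since $\1$ has only the identity 1-cell, the usual normalisation argument lets us replace $F$, up to an invertible pseudonatural transformation, by a pseudofunctor that is strict on the underlying 2-category. After this step the only remaining data are the weakly lax monoidal structure.

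Writing out that structure, the lax constraint is a pseudonatural transformation $\chi\colon \otimes_\K \circ (F\times F) \Rightarrow F \circ \otimes_\1$. Because the domain is $\1\times\1$, this amounts to a single 1-cell $m\colon M\otimes M \to M$ together with pseudonaturality 2-cells that are trivial after strictification. Likewise the unit constraint amounts to a 1-cell $u\colon I \to M$. The weak coherence data of a weakly lax monoidal pseudofunctor are invertible modifications $\omega$, $\gamma$, $\delta$ for associativity and the two unit laws. Each of these has a single component, giving invertible 2-cells
\[ m\circ(m\otimes \id_M) \Rightarrow m\circ(\id_M\otimes m)\circ \alpha, \qquad m\circ(u\otimes \id_M)\Rightarrow \lambda, \qquad m\circ(\id_M\otimes u)\Rightarrow\rho, \]
where the structure isomorphisms of $\K$ are inserted as needed. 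The coherence axioms of a weakly lax monoidal pseudofunctor, as in \cite{MoeVas20} or \cite{CheGur11}, then reduce exactly to the pentagon and triangle axioms of a pseudo-monoid. This gives the correspondence on objects.

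The same unpacking applies one level up. A weakly monoidal pseudonatural transformation between two such pseudofunctors consists of a 1-cell $f\colon M\to N$, invertible 2-cells $f\circ m \Rightarrow n\circ(f\otimes f)$ and $f\circ u\Rightarrow u'$, and axioms which become those of a strong morphism of pseudo-monoids. Monoidal modifications become 2-cells of pseudo-monoid morphisms. With this, the comparison 2-functor $\HomC{\MonTwoCatps}{\1}{\K}\to\PsMon(\K)$, sending $F$ to $(F(\ast), m, u)$, becomes an isomorphism on hom-categories. It is also essentially surjective, indeed surjective, via the strict pseudofunctor built from a given pseudo-monoid. Hence it is a biequivalence.

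I expect the main obstacle to be the bookkeeping in this reduction, namely matching the many coherence axioms of weakly lax monoidal pseudofunctors with the pseudo-monoid axioms while $\K$ is itself a non-strict monoidal 2-category. Since the statement is known, I would invoke \cite[Proposition 2.6]{MoeVas20} for the strict version. The only step I would add is the remark that, because $\1$ has a single object and a single 1-cell, every pseudofunctor and pseudonatural transformation out of $\1$ is equivalent to a strict one. This makes the pseudo and strict hom-2-categories biequivalent, and the proposition follows.
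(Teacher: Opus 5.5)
Your proposal is correct and follows essentially the same route as the paper. Both arguments unpack a weakly lax monoidal pseudofunctor $\1\to\K$ into an object $M$, 1-cells $M\otimes M\to M$ and $I\to M$ coming from the lax constraints, and invertible associator and unitor 2-cells coming from the single-component modifications; these satisfy exactly the pseudo-monoid axioms, and the paper makes the same remark that strict versus pseudo makes no difference because the domain is $\1$. Your version goes a little further than the paper's sketch, since you also unpack the 1-cells and 2-cells into strong morphisms of pseudo-monoids and their 2-cells, which is what actually yields the biequivalence.
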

\begin{proof}[Proof (sketch)]
    Consider a weakly lax monoidal pseudofunctor $\ov{M}\:\1\to \K$. The image of $\ov{M}$ on the unique element $\ast\in \1$ gives an object $M$ in $\K$. The weakly lax monoidal structure of $\ov{M}$ then equips $M$ with a structure given by the pseudonatural transformations
    \begin{eqD*}
    \begin{cd}*
        {\1\x \1} \&\& \1 \\
	{\K\x \K} \&\& \K
	\arrow["\otimes", from=1-1, to=1-3]
	\arrow[""{name=0, anchor=center, inner sep=0}, "{\ov{M}\x \ov{M}}"', from=1-1, to=2-1]
	\arrow[""{name=1, anchor=center, inner sep=0}, "{\ov{M}}", from=1-3, to=2-3]
	\arrow["\otimes"', from=2-1, to=2-3]
	\arrow["m"', shorten <=4.7ex, shorten >=4.2ex, Rightarrow, from=0, to=1]
    \end{cd}\quad \h[6] \quad
    \begin{cd}*
    \1 \& \1 \\
	\& \K
	\arrow["{\id{}}", from=1-1, to=1-2]
	\arrow["e", shift right=7, Rightarrow, from=1-1, to=1-2, shorten <=1ex]
	\arrow["I"', from=1-1, to=2-2,bend right]
	\arrow["{\ov{M}}", from=1-2, to=2-2]
    \end{cd}
    \end{eqD*}
    together with invertible modifications that represent an associator and two unitors. Notice that $m$ and $e$ correspond to morphisms $m\:M\otimes M\to M$ and $e\:I\to M$ in $\K$. It is straightforward to check that this data is precisely that of a pseudo-monoid in $\K$.
\end{proof}

\begin{proposition}\label{propinducepsmon}
    Let $\A$ be a monoidal category. Every weakly lax monoidal pseudofunctor $F\:\A\to \Cat$ induces a pseudofunctor $\widehat{F}\:\Monf(\A)\to \MonCat$. More precisely, $\widehat{F}$ is given by the composite
    $$\Monf(\A)\simeq\HomC{\MonTwoCatps}{\1}{\A}\aar{F\c -}\HomC{\MonTwoCatps}{\1}{\Cat}\simeq \MonCat$$
\end{proposition}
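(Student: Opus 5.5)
The plan is to build $\widehat{F}$ as the displayed composite and check that each of the three pieces is (pseudo)functorial. For the first piece, regard the monoidal category $\A$ as a monoidal 2-category with only identity 2-cells. A pseudo-monoid in such a locally discrete monoidal 2-category is exactly an internal monoid, since all coherence modifications are forced to be identities. Strong morphisms are then monoid homomorphisms and 2-cells are trivial. Combined with the preceding proposition, this gives $\Monf(\A)\simeq\HomC{\MonTwoCatps}{\1}{\A}$, viewed as a (locally discrete) 2-category. For the last piece, apply the same proposition with $\K=\Cat$: pseudo-monoids in $(\Cat,\times,\1)$ are monoidal categories, strong morphisms are strong monoidal functors, and 2-cells are monoidal natural transformations, so $\HomC{\MonTwoCatps}{\1}{\Cat}\simeq\MonCat$.

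The middle step is postcomposition with $F$. Since $\MonTwoCatps$ is a tricategory (\cref{remnotationupg}, via \cite{CheGur11}), whiskering by a fixed 1-cell $F\colon\A\to\Cat$ gives a pseudofunctor between hom 2-categories,
\[F\c -\colon\HomC{\MonTwoCatps}{\1}{\A}\to\HomC{\MonTwoCatps}{\1}{\Cat}.\]
This is part of the tricategory's composition structure: composition is a pseudofunctor of hom-bicategories, and fixing one argument yields a pseudofunctor. Composing a pseudofunctor with two biequivalences (after choosing pseudo-inverses) gives the pseudofunctor $\widehat{F}$.

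To make the result usable, I will unwind the composite explicitly. A monoid $(M,m,e)$ is sent to the category $F(M)$ with tensor product
\[F(M)\times F(M)\xrightarrow{\phi_{M,M}}F(M\otimes M)\xrightarrow{F(m)}F(M)\]
and unit
\[\1\xrightarrow{\iota}F(I)\xrightarrow{F(e)}F(M).\]
The associator and unitors are assembled from the invertible modifications in the weakly lax structure of $F$ together with the pseudofunctoriality constraints of $F$. A homomorphism $f$ is sent to $F(f)$, with strong monoidal structure given by the pseudonaturality 2-cells of $\phi$ and $\iota$, composed with the constraints $F(f\circ m)\iso F(m')\circ F(f\otimes f)$.

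The main obstacle is coherence, in two places: checking that this data satisfies the pentagon and triangle axioms, and checking that the constraints of $\widehat{F}$ are monoidal transformations. I would avoid doing this by hand and rely on the tricategory structure instead, so that the composite of weakly lax monoidal pseudofunctors $\1\to\A\to\Cat$ is automatically weakly lax monoidal. The only genuinely new input is then the identification of the end terms, which the previous proposition provides.
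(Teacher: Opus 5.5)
Your proposal is correct and follows the paper's argument. The paper's one-line proof notes that, since $\MonTwoCatps$ is a tricategory, the representable $\HomC{\MonTwoCatps}{\1}{-}$ is a trifunctor; its action on the 1-cell $F$ is exactly the postcomposition pseudofunctor $F\c -$ that you get by fixing one argument of the tricategorical composition. Your identification of the two end terms and your explicit unwinding of the tensor, the unit and the strong monoidal structure on $F(f)$ match what the paper does in \cref{consinducemonstr}.
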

\begin{proof}
    This is due to the fact that $\MonTwoCatps$ is a tricategory and thus 
    $$\HomC{\MonTwoCatps}{\1}{-}\:\MonTwoCatps\to \Bicat$$
    is a trifunctor.
\end{proof}

\Cref{propinducepsmon} provides us with an interesting link between the two faces of the monoidal Grothendieck construction. In \cite{MoeVas20}, it is shown that when the base monoidal category $\A$ is cocartesian monoidal, then weakly lax monoidal pseudofunctors from $\A$ to $\Cat$ are the same as pseudofunctors from $\A$ to $\MonCat$, and the two faces of the monoidal Grothendieck construction coincide. \prox\ref{propinducepsmon} shows that even when the base is not cocartesian monoidal we can produce a pseudofunctor into $\MonCat$ starting from a weakly lax monoidal pseudofunctor into $\Cat$.

When the base $\A$ is cocartesian monoidal, the category of monoids in $\A$ is isomorphic to $\A$. \prox\ref{propinducepsmon} thus induces a pseudofunctor $\A\to \MonCat$. This could thus lead to a new proof of the result of \cite{MoeVas20}.

\begin{cons}\label{consinducemonstr}
    We can extract from \prox\ref{propinducepsmon} the following explicit construction of $\widehat{F}\:\Monf(\A)\to \MonCat$. Consider a monoid $(M,M\otimes M \ar{m}M,I\ar{e}M)$ in $\A$, exhibited by a weakly lax monoidal pseudofunctor $\ov{M}\:\1\to \A$. Postcomposing with $F\:\A\to \Cat$ we obtain a monoidal category, which we now describe. In the following, we omit the isomorphisms given by the pseudofunctoriality of $F$.
    The base category is given by $F(\ov{M}(\ast)) = F(M)$. Recall that $F(M)$ will become the fibre over the monoid $M$ in $\Groth{\widehat{F}} \to \Monf(\A)$. We now equip it with a monoidal structure. The tensor product $\otimes_{F(M)}$ and monoidal unit $I_{F(M)}$ of $F(M)$ are given by the pastings
    \begin{eqD*}
    \begin{cd}*
    {\1\x \1} \&\& \1 \\
	{\A\x \A} \&\& \A \\
	{\Cat\x \Cat} \&\& \Cat
	\arrow["\otimes", from=1-1, to=1-3]
	\arrow[""{name=0, anchor=center, inner sep=0}, "{\ov{M}\x \ov{M}}"', from=1-1, to=2-1]
	\arrow[""{name=1, anchor=center, inner sep=0}, "{\ov{M}}", from=1-3, to=2-3]
	\arrow["\otimes"', from=2-1, to=2-3]
	\arrow[""{name=2, anchor=center, inner sep=0}, "{F\x F}"', from=2-1, to=3-1]
	\arrow[""{name=3, anchor=center, inner sep=0}, "F", from=2-3, to=3-3]
	\arrow["\x"', from=3-1, to=3-3]
	\arrow["m"', shorten <=4.7ex, shorten >=4.2ex, Rightarrow, from=0, to=1]
	\arrow["{\phi^F}"', shorten <=4.7ex, shorten >=4.2ex, Rightarrow, from=2, to=3]
    \end{cd}\quad \h[18]\quad
    \begin{cd}*[6][8]
    \1 \& \1 \\
	\& \A \\
	\& \Cat
	\arrow["{\id{}}", from=1-1, to=1-2]
	\arrow["e"{pos=0.59}, shift right=7, Rightarrow, from=1-1, to=1-2, shorten <=2.9ex]
	\arrow["{\iota^F}"{pos=0.69}, shift right=22, Rightarrow, from=1-1, to=1-2, shorten <=4.5ex]
	\arrow["I"', curve={height=12pt}, from=1-1, to=2-2]
	\arrow["\1"', curve={height=18pt}, from=1-1, to=3-2]
	\arrow["{\ov{M}}", from=1-2, to=2-2]
	\arrow["F", from=2-2, to=3-2]
    \end{cd}
    \end{eqD*}
    where $\phi^F$ and $\iota^F$ are the pseudonatural transformations given by the weakly lax monoidal structure of $F$. Explicitly, $\otimes_{F(M)}$ and $I_{F(M)}$ are represented respectively by the morphisms
    \begin{center}
    \vspace{-10pt}
    \[F(M)\x F(M)\aar{\phi^F_{M,M}} F(M\otimes M)\aar{F(m)}F(M)\]
    and
    \[\1\aarr{\iota^F}F(I)\aar{F(e)}F(M).\]
    \end{center}
    It is straightforward to calculate that the associator and unitors of $F(M)$ are given, respectively, by the pastings of natural transformations
\begin{cd}[3.9][-8.4]
	\&\&\&[-3ex] {F(M) \times F(M) \times F(M)} \&[-3ex]\&\& \\[-1ex]
	\& {F(M) \times F(M \otimes M)} \&\& {\cong \omega^F_{M,M,M}} \&\& {F(M \otimes M) \times F(M)} \\
	{F(M) \times F(M)} \& {\overset{\text{interch}}{\cong}} \& {F(M \otimes (M \otimes M))} \&\& {F((M \otimes M) \otimes M)} \& {\overset{\text{interch}}{\cong}} \& {F(M) \times F(M)} \\
	\& {F(M \otimes M)} \&\&\&\& {F(M \otimes M)} \\[-1ex]
	\&\&\& {F(M)}
	\arrow["{\id{} \times \varphi^F_{M,M}}"', from=1-4, to=2-2]
	\arrow["{\varphi^F_{M,M} \times \id}", from=1-4, to=2-6]
	\arrow["{\id \times F(m)}"', from=2-2, to=3-1]
	\arrow["{\varphi^F_{M, M \otimes M}}"{description}, from=2-2, to=3-3]
	\arrow["{\varphi^F_{M \otimes M, M}}"{description}, from=2-6, to=3-5]
	\arrow["{F(m) \times \id}", from=2-6, to=3-7]
	\arrow["{\varphi^F_{M,M}}"', from=3-1, to=4-2]
	\arrow["{F(\alpha^{\A})}","{\iso}"', from=3-5, to=3-3]
	\arrow["{F(\id \otimes m)}"{description}, from=3-3, to=4-2]
	\arrow["{F(m \otimes \id)}"{description}, from=3-5, to=4-6]
	\arrow["{\varphi^F_{M,M}}", from=3-7, to=4-6]
	\arrow["{F(m)}"', from=4-2, to=5-4]
	\arrow["{F(m)}", from=4-6, to=5-4]
\end{cd}

\begin{cd}
    {F(M)} \& {F(M) \times \1} \& {F(M) \times F(I)} \& {F(M) \times F(M)} \\
	\&\& {F(M \otimes I)} \& {F(M \otimes M)} \\[-3ex]
	\& {F(M)}
	\arrow["\cong", from=1-1, to=1-2]
	\arrow["{F(\rho^{\A})}"{pos=0.4}, from=2-3, to=1-1]
	\arrow["\id"', curve={height=6pt}, from=1-1, to=3-2]
	\arrow["{\id \times \iota^F}", from=1-2, to=1-3]
	\arrow["{\xi^F}"'{pos=0.7}, shift right=5, iso, from=1-2, to=1-3]
	\arrow["{\id \times F(e)}", from=1-3, to=1-4]
	\arrow["{\overset{\text{interch}}{\cong}}"{marking, allow upside down}, shift right=9, draw=none, from=1-3, to=1-4]
	\arrow["{\varphi^F_{M,I}}", from=1-3, to=2-3]
	\arrow["{\varphi^F_{M,M}}", from=1-4, to=2-4]
	\arrow["{F(\id \otimes e)}"', from=2-3, to=2-4]
	\arrow["{F(m)}", curve={height=-12pt}, from=2-4, to=3-2]
\end{cd}
    and a pasting analogous to the latter for the other unitor. Above, $\omega^F$ and $\xi^F$ denote the invertible modifications given by the weakly lax monoidal structure of $F$. The isomorphic 2-cells labelled ``interch'' are given by the weak interchange rule of $\Bicat$, whereas the diagrams without filling commute, up to the omitted isomorphisms given by the pseudofunctoriality of $F$.
\end{cons}

\begin{remark}\label{remglobalmonstr}
    The pseudonatural transformations $\phi^F$ and $\iota^F$ of \consx\ref{consinducemonstr} are precisely what equips the total monoidal category of the monoidal Grothendieck construction of a weakly lax monoidal pseudofunctor into $\Cat$ with a global monoidal structure. Indeed, this is explained in \cite{MoeVas20}. So, interestingly, we can view the induced monoidal structure on the fibres that we have constructed in \consx\ref{consinducemonstr} as a structure that considers the global monoidal structure in the total category and then takes opcartesian liftings to ensure that we land over $M$.
\end{remark}

We now present the main theorem of this section. We prove that monoids in the total monoidal category given by a monoidal Grothendieck construction precisely correspond to the Grothendieck construction of a pseudofunctor that takes monoids in the fibres.

\begin{theorem}\label{theormoninmonoidalgroth}
    Let $P\:\E\to \A$ be a monoidal opfibration, given by the monoidal Grothendieck construction of a weakly lax monoidal pseudofunctor $F\:\A\to \Cat$. Then $\Monf(P)\:\Monf(\E)\to \Monf(\A)$ is the Grothendieck construction of the pseudofunctor
    $$\Monf(\A)\arr{\widehat{F}}\MonCat\aar{\Monf(-)}\Cat$$
\end{theorem}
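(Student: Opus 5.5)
We will prove the theorem by directly comparing $\Monf(\E)$ with the total category $\Groth{(\Monf\c\widehat{F})}$ over $\Monf(\A)$. Since the Grothendieck construction is a 2-equivalence between $\OpICat$ and $\Fib[o]$, there are two things to do. First, show that $\Monf(P)$ is an opfibration. Second, identify its fibres and reindexing functors with $\Monf(\widehat{F}(-))$, pseudonaturally in the monoid. We may assume throughout that $\E=\Groth{F}$ with the global monoidal structure of \cite{MoeVas20} recalled in \cref{remglobalmonstr}:
\[
(A,X)\otimes(B,Y)=(A\otimes B,\phi^F_{A,B}(X,Y)),\qquad I_\E=(I,\iota^F).
\]

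\textbf{Step 1: objects.} A monoid in $\E$ is a triple consisting of
\begin{itemize}
\item an object $(M,X)$ with $X\in F(M)$;
\item a multiplication $(M,X)\otimes(M,X)\to(M,X)$;
\item a unit $I_\E\to(M,X)$.
\end{itemize}
Applying $P$, which is strict monoidal, gives a monoid $(M,m,e)$ in $\A$. By the description of morphisms in $\Groth{F}$, the multiplication is the same as a morphism $\mu\colon F(m)\phi^F_{M,M}(X,X)\to X$ in $F(M)$, and the unit is the same as a morphism $\eta\colon F(e)\iota^F\to X$ in $F(M)$. By \cref{consinducemonstr}, these are exactly morphisms $X\otimes_{F(M)}X\to X$ and $I_{F(M)}\to X$.

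We then check that the associativity and unit axioms in $\E$, once transported into the fibre $F(M)$ via the opcartesian factorisation, coincide with the associativity and unit axioms in $F(M)$. The associator and unitors of $F(M)$ are the pastings in \cref{consinducemonstr}. This is the main obstacle: we must match the composite of the associator of $\E$ (built from $\omega^F$ and $F(\alpha^\A)$) with the reindexed multiplications against that pasting, keeping track of the pseudofunctoriality isomorphisms and the interchange cells. We plan to do this by writing both sides as morphisms in $F(M)$ out of $F(m)F(m\otimes\id)F(\alpha)\phi(\phi(X,X),X)$ and using naturality of $\phi^F$ to move $\mu$ through. The unit laws are handled the same way using $\xi^F$.

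\textbf{Step 2: morphisms.} A monoid morphism $(f,\alpha)\colon(M,X)\to(N,Y)$ in $\E$ has $f$ a monoid morphism in $\A$ and $\alpha\colon F(f)X\to Y$. Using pseudonaturality of $\phi^F$ and $\iota^F$, the functor $F(f)$ carries a strong monoidal structure $F(M)\to F(N)$; this is precisely $\widehat{F}(f)$ from \cref{propinducepsmon}. Compatibility of $(f,\alpha)$ with multiplications and units then unwinds to $\alpha$ being a monoid morphism $\Monf(\widehat{F}(f))(X)\to Y$ in $F(N)$.

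Consequently, $\Monf(\E)$ has the same objects and morphisms as $\Groth{(\Monf\c\widehat{F})}$, compatibly with composition, which is computed in both cases via the pseudofunctor constraints of $F$. The comparison functor commutes with the projections to $\Monf(\A)$, so it is an isomorphism over $\Monf(\A)$. In particular $\Monf(P)$ is an opfibration, with opcartesian lifts $(f,\id)\colon(M,X)\to(N,\widehat{F}(f)X)$. If one prefers to avoid the explicit identification of composites, an alternative is to verify directly that these lifts are opcartesian and that the fibre over $M$ is $\Monf(F(M),\otimes_{F(M)})$, and then invoke the 2-equivalence.
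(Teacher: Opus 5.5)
Your proposal is correct and follows essentially the same route as the paper: you directly identify monoids and monoid morphisms in $\Groth{F}$ (with global tensor given by $\phi^F$ and unit by $\iota^F$) with pairs consisting of a monoid in $\A$ and a monoid in the fibre for the induced structure of \cref{consinducemonstr}, which gives an isomorphism $\Monf(\E)\iso\Groth{(\Monf(-)\c\widehat{F})}$ over $\Monf(\A)$. You are more explicit than the paper about the associativity check it calls straightforward, but the expression $F(m)F(m\otimes\id)F(\alpha)\phi(\phi(X,X),X)$ does not typecheck; the common domain should be $F(m)F(m\otimes\id)\phi(\phi(X,X),X)$, i.e.\ the reindexing along $m(m\otimes\id)=m(\id\otimes m)\alpha^{\A}$.
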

\begin{proof}
    We prove that $\Monf(\E)$ is isomorphic over $\Monf(\A)$ to the Grothendieck construction of $\Monf(-)\c \widehat{F}$. An object of $\Monf(\E)=\Monf(\Groth{F})$ is given by an object $(M,X)\in \Groth{F}$ --- that is, a pair of $M\in \A$ and $X\in F(M)$ --- equipped with a multiplication map $(m,\mu)\:(M,X)\xgf (M,X)\to (M,X)$ and a unit map $(e,\epsilon)\:I_{\Groth{F}}\to (M,X)$ in $\Groth{F}$ satisfying the axioms of an internal monoid. As explained in \cite{MoeVas20} (see also \remx\ref{remglobalmonstr} as well as \defx\ref{defmonoidalfibration} and the discussion below it), the global tensor product $\xgf$ of the total category of the monoidal Grothendieck construction of a weakly lax pseudofunctor into $\Cat$ is calculated as follows:
    $$(M,X)\xgf (N,Y)=(M\otimes N,X\xf Y)$$
    where $\otimes$ is the tensor product in $\A$ and $\xf$ coincides with the component $\phi^F_{M,N}$ on $(M,N)$ of the pseudonatural transformation $\phi^F$ given by the weakly lax monoidal structure of $F$.
    
    Explicitly, $(m,\mu)\:(M\otimes M,X\xf X)\to (M,X)$ is a morphism
    in $\Groth{F}$ and so is given by a morphism $m\:M\otimes M\to M$ in $\A$ and a morphism 
    $\mu\:F(m)(X\xf X)\to X$
    in $F(M)$. Similarly, the monoidal unit of the global monoidal structure of $\Groth{F}$ is
    $I_{\Groth{F}}=(I,I_F)$
    where $I$ is the monoidal unit of $\A$ and $I_F=\iota^F(\ast)$ where $\iota^F$ is the pseudonatural transformation given by the weakly lax monoidal structure of $F$. Finally, $(e,\epsilon)\:(I,I_F)\to (M,X)$ is a morphism 
    in $\Groth{F}$ given by a morphism $e\:I\to M$ in $\A$ and a morphism
    $\epsilon\:F(e)(I_F)\to X$
    in $F(M)$. 
    
    The axioms of internal monoids in $\Groth{F}$ are given by the following commutative diagrams.
    \begin{cd}[5][8]
    {((M,X) \otimes_{\Groth{F}} (M,X)) \otimes_{\Groth{F}} (M,X)} \& {(M,X) \otimes_{\Groth{F}} (M,X)} \\[-2ex]
	{(M,X) \otimes_{\Groth{F}} ((M,X) \otimes_{\Groth{F}} (M,X))} \\
	{(M,X) \otimes_{\Groth{F}} (M,X)} \& {(M,X)}
	\arrow["{(m,\mu) \otimes_{\Groth{F}} \id}", from=1-1, to=1-2]
	\arrow["\cong"', from=1-1, to=2-1]
	\arrow["{(m,\mu)}", from=1-2, to=3-2]
	\arrow["{\id \otimes_{\Groth{F}} (m,\mu)}"', from=2-1, to=3-1]
	\arrow["{(m,\mu)}"', from=3-1, to=3-2]
    \end{cd}
    \begin{eqD*}
    \begin{cd}*
        {(M,X)} \& {(I, I_F) \otimes_{\Groth{F}} (M,X)} \\
	{(M,X)} \& {(M,X) \otimes_{\Groth{F}} (M,X)}
	\arrow["\cong", from=1-1, to=1-2]
	\arrow[equals, from=1-1, to=2-1]
	\arrow["{(e,\varepsilon) \otimes_{\Groth{F}} \id}", from=1-2, to=2-2]
	\arrow["{(m,\mu)}", from=2-2, to=2-1]
    \end{cd}\quad \h[16]\quad
        \begin{cd}*
{(M,X)} \& {(M,X) \otimes_{\Groth{F}} (I,I_F)} \\
	{(M,X)} \& {(M,X) \otimes_{\Groth{F}} (M,X)}
	\arrow["\cong", from=1-1, to=1-2]
	\arrow[equals, from=1-1, to=2-1]
	\arrow["{\id \otimes_{\Groth{F}} (e,\epsilon)}", from=1-2, to=2-2]
	\arrow["{(m,\mu)}", from=2-2, to=2-1]   
        \end{cd}
    \end{eqD*}
    It is straightforward to prove that on the first component these mean that $(M,m,e)$ is a monoid in $\A$, while on the second component they encode that $(X,\mu,\epsilon)$ is a monoid in $F(M)$ with respect to the induced monoidal structure we described in \consx\ref{consinducemonstr}. For this we use that $\otimes_{F(M)}$ and $I_{F(M)}$ are respectively given by
    \begin{center}
    $$F(M)\x F(M)\aar{\xf} F(M\otimes M)\aar{F(m)}F(M)$$
    and
    $$\1\aarr{I_F}F(I)\aar{F(e)}F(M)$$
    \end{center}
    due to \remx\ref{remglobalmonstr}. Then $\mu$ and $\epsilon$ are morphisms $\mu\:X\xfm X\to X$ and $\epsilon\:I_{F(M)}\to X$.
    
    It is also helpful to consider the fibrational point of view, for which 
    $X\otimes_{F(M)}X$ is given by the opcartesian lifting of $m\:M\otimes M\to M$ to $X\otimes_F X$
    \begin{cd}
        \E \& X \&[-2ex] {X\otimes_F X} \& {X\otimes_{F(M)} X} \\
	\A \& M \& {M\otimes M} \& M
	\arrow["P"', from=1-1, to=2-1]
	\arrow[maps to, from=1-2, to=2-2]
	\arrow[dashed, from=1-3, to=1-4]
	\arrow[maps to, from=1-3, to=2-3]
	\arrow[maps to, from=1-4, to=2-4]
	\arrow["m"', from=2-3, to=2-4]
    \end{cd}
    and similarly $I_{F(M)}$ is given by the opcartesian lifting of $e$ to $I_F$
    \begin{cd}
    \E \& {I_F} \& {I_{F(M)}} \\
	\A \& I \& M
	\arrow["P"', from=1-1, to=2-1]
	\arrow[dashed, from=1-2, to=1-3]
	\arrow[maps to, from=1-2, to=2-2]
	\arrow[maps to, from=1-3, to=2-3]
	\arrow["e"', from=2-2, to=2-3]
    \end{cd}
    By the universal property of the lift, the multiplication map $\mu\:X\otimes_{F(M)} X\to X$ and the unit map $\epsilon\:I_{F(M)}\to X$, which are vertical over $M$, correspond respectively to morphisms $[\mu]$ and $[\epsilon]$ as in the following diagram.
    \begin{cd}
        \E \& {X\otimes_F X} \& X \& {I_F} \& X \\
	\A \& {M\otimes M} \& M \& I \& M
	\arrow["P"', from=1-1, to=2-1]
	\arrow["{[\mu]}", from=1-2, to=1-3]
	\arrow[maps to, from=1-2, to=2-2]
	\arrow[maps to, from=1-3, to=2-3]
	\arrow["{[\epsilon]}", from=1-4, to=1-5]
	\arrow[maps to, from=1-4, to=2-4]
	\arrow[maps to, from=1-5, to=2-5]
	\arrow["m"', from=2-2, to=2-3]
	\arrow["e"', from=2-4, to=2-5]
    \end{cd}
    We can confirm the monoid axioms using the universal properties.
    
    We have thus shown that objects in $\Monf(\E)$ precisely correspond to objects of $\Groth{(\Monf(-)\c \widehat{F})}$. Indeed the latter objects are pairs $((M,m,e),(X,\mu,\epsilon))$ of a monoid $(M,m,e)$ in $\A$ and a monoid $(X,\mu,\epsilon)$ in $F(M)$ with respect to the induced monoidal structure.
    We now look at the correspondence on morphisms.
    
    Consider a morphism of monoids in $\E=\Groth{F}$ given by
    $$(f,h)\:((M,X),(m,\mu),(e,\epsilon))\to ((N,Y),(m',\mu'),(e',\epsilon')).$$
    Here $f\:M\to N$ is a morphism in $\A$ and
    $h\:F(f)(X)\to Y$ is a morphism in $F(N)$ such that the following diagrams commute.
    \begin{eqD*}
    \begin{cd}*
        {(M\otimes M, X\xf X)} \& {(M,X)} \\
	{(N\otimes N, Y\xf Y)} \& {(N,Y)}
	\arrow["{(m,\mu)}", from=1-1, to=1-2]
	\arrow["{(f\otimes f, h\xf h\c j)}"', from=1-1, to=2-1]
	\arrow["{(f,h)}", from=1-2, to=2-2]
	\arrow["{(m',\mu')}"', from=2-1, to=2-2]
    \end{cd}\quad \h[16]\quad
    \begin{cd}*
        {(I,I_F)} \& {(M,X)} \\
	\& {(N,Y)}
	\arrow["{(e,\epsilon)}", from=1-1, to=1-2]
	\arrow["{(e',\epsilon')}"', curve={height=6pt}, from=1-1, to=2-2]
	\arrow["{(f,h)}", from=1-2, to=2-2]
    \end{cd}
    \end{eqD*}
    Here $j$ is the isomorphism
    $$F(f\otimes f)(X\xf X)\iso F(f)(X)\xf F(f)(X)$$
    given by the pseudonaturality of $\phi^F$. On the first component, the two diagrams above precisely entail that $f\:(M,m,e)\to (N,m',e')$ is a morphism of monoids in $\A$, whereas on the second component, they translate to give
    \begin{eqD*}
    \begin{cd}*[3.5][11.0]
        {F(f)(F(m)(X\xf X))} \& {F(f)(X)} \&[-8ex] \\
	{F(m')(F(f\otimes f)(X\xf X))} \&\& Y \\
	{F(m')(F(f)(X)\xf F(f)(X))} \& {F(m')(Y\xf Y)}
	\arrow["{F(f)(\mu)}", from=1-1, to=1-2]
	\arrow["\cong"', from=1-1, to=2-1]
	\arrow["h", from=1-2, to=2-3]
	\arrow["{F(m')(j)}"', from=2-1, to=3-1]
	\arrow["{F(m')(h\xf h)}"', from=3-1, to=3-2]
	\arrow["{\mu'}"', from=3-2, to=2-3]
    \end{cd}
    \end{eqD*}
    \begin{eqD*}
    \begin{cd}*[3.5][5.0]
        {F(f)(F(e)(I_F))} \& {F(f)(X)} \\
	{F(e')(I_F)} \& Y
	\arrow["{F(f)(\epsilon)}", from=1-1, to=1-2]
	\arrow["\cong"', from=1-1, to=2-1]
	\arrow["h", from=1-2, to=2-2]
	\arrow["{\epsilon'}"', from=2-1, to=2-2]
    \end{cd}
    \end{eqD*}
    where the unlabelled isomorphisms are given by the pseudofunctoriality of $F$. 

    Consider now a morphism in $\Groth{(\Monf(-)\c \widehat{F})}$
    $$(f,k)\:((M,m,e),(X,\mu,\epsilon))\to ((N,m',e'),(Y,\mu',\epsilon')).$$
    This is given by a morphism $f\:(M,m,e)\to (N,m',e')$ between monoids in $\A$ and a morphism
    $k\:(\Monf(-)\c \widehat{F})(f)(X,\mu,\epsilon)\to (Y,\mu',\epsilon')$
    between monoids in $F(N)$ (with respect to the induced monoidal structure described in \consx\ref{consinducemonstr}). The strong monoidal functor $\widehat{F}(f)$ is given by the whiskering
    \begin{cd}[5][8]
    \1 \& \A \& \Cat
	\arrow[""{name=0, anchor=center, inner sep=0}, "{\ov{M}}", curve={height=-12pt}, from=1-1, to=1-2]
	\arrow[""{name=1, anchor=center, inner sep=0}, "{\ov{N}}"', curve={height=12pt}, from=1-1, to=1-2]
	\arrow["F", from=1-2, to=1-3]
	\arrow["{\ov{f}}", shorten <=0.7ex, shorten >=0.5ex, Rightarrow, from=0, to=1]
    \end{cd}
    where $\ov{f}$ is the weakly monoidal pseudonatural transformation corresponding to the morphism $f$ of monoids in $\A$. In other words, $\widehat{F}(f)$ is the functor $F(f)\:F(M)\to F(N)$ equipped with the structure of a strong monoidal functor given by the pseudofunctoriality of $F$ and the pseudonaturality of $\phi^F$. The object $(\Monf(-)\c \widehat{F})(f)(X,\mu,\epsilon)$ in $\Monf(F(N))$ is then given by $F(f)(X)$ equipped with multiplication $[\mu]$ given by
\[
\begin{tikzcd}
F(f)(X)\otimes_{F(N)} F(f)(X) = F(m')\bigl(F(f)(X)\xf F(f)(X)\bigr)
  \arrow[d, "F(m')(j^{-1})", "\scriptstyle\simeq"'] \\
F(m')\bigl(F(f\otimes f)(X\xf X)\bigr)
  \arrow[d, "\scriptstyle\simeq"'] \\
F(f)\bigl(F(m)(X\xf X)\bigr)
  \arrow[d, "F(f)(\mu)"] \\
F(f)(X)
\end{tikzcd}
\]
and unit
$$[\epsilon]\: I_{F(N)}=F(e')(I_F)\iso F(f)(F(e)(I_F))\aar{F(f)(\epsilon)} F(f)(X),$$ where the unlabelled isomorphisms are given by the pseudofunctoriality of $F$. So $k$ is a morphism from $F(f)(X)$ to $Y$ in $F(N)$ that satisfies the axioms of a monoid morphism in $F(N)$ given below.
\begin{eqD*}
\begin{cd}*
    {F(f)(X)\otimes_{F(N)} F(f)(X)} \& {F(f)(X)} \\
	{Y\otimes_{F(N)} Y} \& Y
	\arrow["{[\mu]}", from=1-1, to=1-2]
	\arrow["{k \otimes_{F(N)} k}"', from=1-1, to=2-1]
	\arrow["k", from=1-2, to=2-2]
	\arrow["{\mu'}"', from=2-1, to=2-2]
\end{cd}\quad\h[16]\quad
\begin{cd}*
    {I_{F(N)}} \& {F(f)(X)} \\
	\& Y
	\arrow["{[\epsilon]}", from=1-1, to=1-2]
	\arrow["{\epsilon'}"', curve={height=6pt}, from=1-1, to=2-2]
	\arrow["k", from=1-2, to=2-2]
\end{cd}
\end{eqD*}

We can now conclude that monoid morphisms in $\Groth{F}$ precisely correspond to morphisms in $\Groth{(\Monf(-)\c \widehat{F})}$.

It is straightforward to show that the constructions we have produced above are functorial and mutually inverse, yielding an isomorphism of categories $\Monf(\E)\iso \Groth{(\Monf(-)\c \widehat{F})}$. Moreover, such an isomorphism of categories clearly commutes with the maps to $\Monf(\A)$ by construction.
\end{proof}

\begin{remark}
Of course, \thex\ref{theormoninmonoidalgroth} also yields a dual result about taking comonoids starting from a monoidal fibration which is given by the monoidal Grothendieck construction of a weakly lax monoidal pseudofunctor $\A\op\to \Cat$.
\end{remark}

\section{Application to inverse semirings}\label{secinvsemirings}

In this section, we apply \thex\ref{theormoninmonoidalgroth} to the monoidal opfibration of commutative inverse monoids in order to capture inverse semirings. We prove that the category of inverse semirings arises from the Grothendieck construction of a pseudofunctor that takes monoids in the fibres of the fibration of commutative Clifford monoids over semilattices. Interestingly, the induced monoidal structure on such fibres is precisely that of the Day convolution.

Recall that if $\C$ is a small monoidal category and $\D$ is a cocomplete monoidal category in which $(-) \otimes (-)$ preserves colimits in each variable (such as a cocomplete monoidal closed category), \emph{Day convolution} is the tensor product of a monoidal structure on $\Cat(\C,\D)$ defined by \[(F \otimes G)(c) = \int^{x, y \in \C} \C(x \otimes_\C y, c) \cdot (F(x) \otimes_\D G(y)).\]
This can be understood as defining $F \otimes G = \Lan{{\otimes}_\C} ({\otimes_\D} \circ (F \times G))$.
Moreover, the unit is given by $\Lan{I_\C}(I_\D)$ (where $I_\C$ and $I_\D$ are viewed as constant functors from $\1$).

We proved in \cref{thmtotalccliff} that $E\:\CInvMon\to \SLat$ is equivalent over $\SLat$ to the monoidal Grothendieck construction of the weakly lax monoidal pseudofunctor
\begin{fun}
\CInvMon_{(-)}^* & \: & \SLat & \too & \Cat \\[1ex]
&& L & \mto & \CInvMon_L\simeq \Abf(\Fib[d]_{L})\simeq \HomC{\Cat}{L\op}{\Ab}
\end{fun}
In this section, since the commutative inverse monoids are viewed additively, it will be convenient to work with \emph{join}-semilattices and thus take the reverse order on the Clifford monoids. With this convention we consider functors from $L$ to $\Ab$, not from $L\op$ to $\Ab$.

We now show that the monoidal structure on the fibres of $\CInvMon_{(-)}^*$ over idempotent semirings induced by \prox\ref{propinducepsmon} (see also \consx\ref{consinducemonstr}) is none other than the monoidal structure given by Day convolution.

\begin{theorem}\label{thm:DayConvolutionFibre}
    Consider the pseudofunctor $\widehat{\CInvMon_{(-)}^*}\:\Monf(\SLat)\to \MonCat$ induced by $\CInvMon_{(-)}^*$. For every $(Q,m,e)\in \Monf(\SLat)$, the monoidal structure on 
    $$\widehat{\CInvMon_{(-)}^*}(Q,m,e)=\HomC{\Cat}{Q}{\Ab}$$
    coincides with that of Day convolution.
\end{theorem}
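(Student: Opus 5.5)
By \consx\ref{consinducemonstr}, the monoidal structure on the fibre $\widehat{\CInvMon_{(-)}^*}(Q,m,e)=\HomC{\Cat}{Q}{\Ab}$ has tensor product
\[F\otimes_{Q} G \;=\; \CInvMon_{m}^*\bigl(\phi_{Q,Q}(F,G)\bigr)\]
and unit $\CInvMon_e^*(\iota(\ast))$. So the plan is to compute these two composites using the explicit descriptions of $\phi$, $\iota$ and the reindexing $\Lan{}$ from \cref{thmtotalccliff} and \consx\ref{consweaklylaxmonpseudofunstruct}. We then compare the result with the Kan extension formula $\Lan{m}(\otimes_{\Ab}\circ(F\times G))$ for Day convolution. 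Throughout we work with join-semilattices and covariant functors $Q\to\Ab$, as stipulated at the start of this section.

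\textbf{Tensor.} By \consx\ref{consweaklylaxmonpseudofunstruct}, $\phi_{Q,Q}(F,G)\colon Q\otimes Q\to\Ab$ is the functor induced from $\otimes_{\Ab}\circ(F\times G)\colon Q\times Q\to\Ab$ along the canonical map $q\colon Q\times Q\to Q\otimes Q$. I would first show that this induced functor is exactly $\Lan{q}(\otimes_{\Ab}\circ(F\times G))$.

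The semilattice $Q\otimes Q$ is generated under joins by the elements $a\otimes b$. For commutative inverse monoids, the element of the global tensor lying over a join $\bigvee_i a_i\otimes b_i$ is obtained by summing the pieces $F(a_i)\otimes G(b_i)$ and pushing them into the group over that join. This is precisely a colimit over the pairs $(a,b)$ with $a\otimes b\le c$, which is the pointwise formula for $\Lan{q}$.

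The remaining reindexing $\CInvMon_m^*$ is $\Lan{m'}$, where $m=m'\circ q$ with $m'\colon Q\otimes Q\to Q$ the linear map. Since left Kan extensions compose up to canonical isomorphism,
\[F\otimes_Q G\;\cong\;\Lan{m'}\Lan{q}\bigl(\otimes_{\Ab}\circ(F\times G)\bigr)\;\cong\;\Lan{m}\bigl(\otimes_{\Ab}\circ(F\times G)\bigr),\]
which is Day convolution. The same argument should handle the associator, because it is built from the pseudofunctoriality isomorphisms of $\Lan{}$ together with $\omega^F$.

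\textbf{Unit.} The unit $\iota(\ast)$ is the functor $\mathbbm{2}\to\Ab$ corresponding to $\Z_0$: it sends the bottom to $0$ and the top to $\Z$ (in join convention the group over the unit $I=\mathbbm{2}$). Reindexing along $e\colon\mathbbm{2}\to Q$ gives $\Lan{e}$ of this functor. Since this functor is itself $\Lan{}$ of the constant $\Z$ along the inclusion $\1\to\mathbbm{2}$ of the unit element, composing the two extensions yields $\Lan{I_Q}(\Z)$, which is the Day unit.

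\textbf{Main obstacle.} The step I expect to be hardest is the first one: identifying the global tensor $\phi_{Q,Q}(F,G)$ of commutative inverse monoids with $\Lan{q}$. Here I would argue concretely with the presentation of $A\otimes B$ used in \cref{theorEismonoidalopfib}. The group $G_c$ in $A\otimes B$ over an idempotent $c$ is generated by the elements $x\otimes y$ with $e_x\otimes e_y\le c$, each pushed up to $c$ by adding $c$. Its relations are bilinearity together with compatibility with the restriction maps. Writing this out gives exactly the coend $\int^{a,b}(Q\otimes Q)(a\otimes b,c)\cdot(F(a)\otimes G(b))$.

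To finish, I would check that the unitors and associator match up to these canonical isomorphisms, so that the identity functor on $\HomC{\Cat}{Q}{\Ab}$ is a monoidal equivalence. I expect that check to be routine.
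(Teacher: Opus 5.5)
Your proposal follows the paper's proof. The fibre tensor is $\Lan{m}\circ\phi_{Q,Q}$, one identifies $\phi_{Q,Q}(F,G)$ with $\Lan{t}(\otimes_{\Ab}\circ(F\times G))$ for $t\colon Q\times Q\to Q\otimes Q$, and composing left Kan extensions gives $\Lan{mt}(\otimes_{\Ab}\circ(F\times G))$, which is Day convolution; the unit is handled the same way. The paper takes the identification of $\phi_{Q,Q}$ with $\Lan{t}$ directly from \cref{consweaklylaxmonpseudofunstruct}, whereas you sketch it via the presentation of $A\otimes B$ and compute the unit explicitly through $\Z_0$; this adds detail but is not a different route.
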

\begin{proof}
    By \consx\ref{consinducemonstr}, the induced monoidal structure on $\HomC{\Cat}{Q}{\Ab}$ has tensor product given by
    $$\HomC{\Cat}{Q}{\Ab}\x \HomC{\Cat}{Q}{\Ab}\aar{\phi_{Q,Q}} \HomC{\Cat}{(Q\otimes Q)}{\Ab}\aar{\Lan{m}}\HomC{\Cat}{Q}{\Ab}$$
    and unit given by
    $$\1\aarr{\iota}\HomC{\Cat}{I}{\Ab}\aar{\Lan{e}}\HomC{\Cat}{Q}{\Ab}$$
    where $\phi$ and $\iota$ are the pseudonatural transformation given by the fact that $\CInvMon_{(-)}^*$ is a weakly lax monoidal pseudofunctor.
    
    As noted in \cref{consweaklylaxmonpseudofunstruct}, these pseudonatural transformations are given by the tensor product on $\Ab$ with $\phi_{Q,Q}(F,G) = \Lan{t} (\otimes_\Ab \circ (F \times G))$ where $t\colon Q \times Q \to Q \otimes Q$ sends $(a,b)$ to $a \otimes b$. Above we view $m$ as a map from $Q \otimes Q$ to $Q$, but the multiplication of $Q$ can also be considered to be $mt\colon Q \times Q \to Q$. Then the tensor product defined above is given by $F \otimes G = \Lan{m}(\Lan{t} (\otimes_\Ab \circ (F \times G))) \cong \Lan{mt}(\otimes_\Ab \circ (F \times G))$. This is precisely the definition of the Day convolution on $\Cat(Q, \Ab)$.
    The case of the unit is similar.
\end{proof}
\begin{remark}
    More generally, a similar result should be true if $\SLat$ is replaced by the 2-category of small categories and $\Ab$ is replaced with any cocomplete monoidal closed category, but making complete sense of this would require a higher-dimensional version of the monoidal Grothendieck construction.
\end{remark}

We will use the following folklore result about monoids with respect to Day convolution.
\begin{proposition}\label{prop:lax_monoidal_monoid_wrt_Day}
    Let $\C$ be a small monoidal category and let $\D$ be a cocomplete monoidal closed category.
    Then monoids with respect to the monoidal structure on the functor category $\Cat(\C,\D)$ given by Day convolution are precisely the lax monoidal functors from $\C$ to $\D$.
\end{proposition}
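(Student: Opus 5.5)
The plan is to use the universal property of the Day convolution as a left Kan extension, recalled just before \cref{thm:DayConvolutionFibre}: $F \otimes G = \Lan{\otimes_\C}(\otimes_\D \circ (F\times G))$, with unit $J := \Lan{I_\C}(I_\D)$. Precomposition with $\otimes_\C$ is right adjoint to $\Lan{\otimes_\C}$. Hence a natural transformation $\mu\colon F\otimes F \to F$ corresponds bijectively to a natural transformation
\[\mu'_{x,y}\colon F(x)\otimes_\D F(y) \to F(x\otimes_\C y)\]
natural in $(x,y)\in \C\times\C$. Likewise, $\eta\colon J\to F$ corresponds to a morphism $\eta'\colon I_\D \to F(I_\C)$, using the adjunction $\Lan{I_\C} \dashv (-)\circ I_\C$ for functors out of $\1$. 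This identifies the data of a monoid structure on $F$ with the data of a lax monoidal structure on $F$.

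Next I would show that the monoid axioms correspond to the lax monoidal coherence axioms. For associativity, I need the triple convolutions $(F\otimes F)\otimes F$ and $F\otimes(F\otimes F)$. Because $\otimes_\D$ preserves colimits in each variable (closedness) and coends commute with one another (Fubini), each is isomorphic to
\[\Lan{\otimes^{(3)}_\C}\bigl(\otimes_\D\circ(F\times F\times F)\bigr),\]
where $\otimes^{(3)}_\C$ denotes either bracketing of the ternary tensor product, the two being identified via $\alpha_\C$. Under these identifications, the two sides of the monoid associativity square correspond, via the adjunction for $\Lan{\otimes^{(3)}_\C}$, to the two composites $F(x)\otimes F(y)\otimes F(z)\to F(x\otimes y\otimes z)$ in the lax associativity axiom. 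Since transposition is a bijection, one square commutes if and only if the other does. The unit laws are handled in the same way, using $J\otimes F \cong \Lan{I_\C\otimes(-)}(I_\D\otimes F(-)) \cong F$ and the unitors of $\C$ and $\D$.

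Finally, on morphisms I would check that a natural transformation $\theta\colon F\to G$ is a monoid homomorphism exactly when it is monoidal. This holds because transposition is natural in the functor being extended, so $\mu^G\circ(\theta\otimes\theta)$ and $\theta\circ\mu^F$ transpose to the two sides of the monoidal naturality condition. This gives an isomorphism of categories between $\Monf(\Cat(\C,\D))$ and $\HomC{\MonCatlax}{\C}{\D}$, which proves \cref{prop:lax_monoidal_monoid_wrt_Day}.

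The main obstacle is the associativity step: the transpose of $\mu\circ(\mu\otimes F)$ must be shown to equal $\mu'_{x\otimes y,z}\circ(\mu'_{x,y}\otimes F(z))$ under the Fubini isomorphism. I would first try to isolate the general lemma that, for the triple coend, $\Lan{}$ of a composite is compatible with pasting of transposes. Then I would verify it on the coend injections, where everything reduces to the definition of $\mu'$ as $\mu$ precomposed with the universal cocone component. Size issues are avoided because $\C$ is small and $\D$ is cocomplete, so all the coends exist.
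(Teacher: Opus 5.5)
Your proposal is correct and is essentially the paper's argument: transpose the unit and multiplication along the adjunctions $\Lan{I_\C} \dashv (-)\circ I_\C$ and $\Lan{\otimes_\C} \dashv (-)\circ \otimes_\C$ to obtain $I_\D \to F(I_\C)$ and $F(x)\otimes F(y)\to F(x\otimes y)$, then check that the monoid laws transpose to the lax monoidal coherence laws. The paper only asserts that last correspondence; you sketch it (via the Fubini description of the triple convolution and the unit isomorphism) and also match monoid homomorphisms with monoidal natural transformations, which adds detail without changing the route.
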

\begin{proof}
    Consider an object $F \in \Cat(\C,\D)$. A monoid structure on $F$ consists of a unit $\iota\: \Lan{I_\C}(I_\D) \to F$ and a multiplication $\mu\: \Lan{{\otimes}_\C} ({\otimes_\D} \circ (F \times F)) \to F$. Using that $\Lan{{\otimes}_\C}$ is left adjoint to composition with ${\otimes}_\C$, we see that these correspond to a morphism $\iota^\sharp\: I_\D \to F(I_\C)$ and a natural transformation $\mu^\sharp\: {\otimes_\D} \circ (F \times F) \to F \circ {\otimes}_\C$ or $\mu^\sharp_{C,C'}\colon F(C) \otimes F(C') \to F(C \otimes C')$, respectively. These will be the unit and multiplication maps for the lax monoidal structure on $F$.
    Finally, the unit and associativity laws for the monoid correspond to the coherence laws for the resulting lax monoidal functor.
\end{proof}

We now obtain our structure theorem for inverse semirings.
\begin{theorem}
    The category of inverse semirings is equivalent over the category of idempotent semirings to the Grothendieck construction of the pseudofunctor
    $$\Monf(\SLat)\aar{\widehat{\CInvMon_{(-)}^*}} \MonCat\aar{\Monf(-)}\Cat$$
    which takes monoids in the fibres $\HomC{\Cat}{Q}{\Ab}$ with respect to the Day convolution.

    In particular, inverse semirings form a (Street) opfibration over idempotent semirings.
\end{theorem}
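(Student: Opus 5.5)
The plan is to deduce the statement from the general result \thex\ref{theormoninmonoidalgroth}, applied to the monoidal opfibration $E\:\CInvMon\to\SLat$. We then identify each piece of the resulting picture with the algebraic objects in the statement.

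First, by \cref{thmtotalccliff}, $E$ is equivalent over $\SLat$ to the monoidal Grothendieck construction of the weakly lax monoidal pseudofunctor $\CInvMon_{(-)}^*$. Since $\Monf(-)$ is a 2-functor, it sends this equivalence over $\SLat$ to an equivalence $\Monf(\CInvMon)\simeq\Monf(\Groth{\CInvMon_{(-)}^*})$ over $\Monf(\SLat)$. It also preserves strong monoidal equivalences, so this step only requires observing that the equivalence of \cref{thmtotalccliff} is a strong monoidal equivalence compatible with the projections. \thex\ref{theormoninmonoidalgroth} then identifies $\Monf(\Groth{\CInvMon_{(-)}^*})\to\Monf(\SLat)$ with the Grothendieck construction of $\Monf(-)\c\widehat{\CInvMon_{(-)}^*}$. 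Finally, \cref{thm:DayConvolutionFibre} shows that the monoidal structure on each fibre $\HomC{\Cat}{Q}{\Ab}$ is Day convolution. By \cref{prop:lax_monoidal_monoid_wrt_Day}, the fibres are therefore categories of lax monoidal functors $Q\to\Ab$.

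Second, I identify the two categories of monoids algebraically. A monoid in $(\SLat,\otimes,\mathbbm{2})$ is a join-semilattice $Q$ with an associative unital multiplication $Q\otimes Q\to Q$. A map out of the tensor product is a bimorphism, meaning it preserves $0$ and binary joins in each variable, so such a monoid is exactly an idempotent semiring. In the same way, a monoid in $(\CInvMon,\otimes,\Z_0)$ is a commutative inverse monoid with a multiplication that is additive and preserves $0$ in each variable, together with a unit. This is exactly an inverse semiring, provided the semiring convention includes $0$ being absorbing, which is encoded by $0\otimes b=0$ in the presentation of the tensor product. Monoid morphisms correspond to semiring homomorphisms in both cases, and $\Monf(E)$ is the functor sending an inverse semiring $R$ to its idempotent semiring $E(R)$. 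Composing these identifications gives the equivalence over idempotent semirings.

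For the final clause, the Grothendieck construction of a pseudofunctor is an opfibration. Being equivalent over the base to an opfibration, the functor from inverse semirings to idempotent semirings is a Street opfibration.

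I expect the main obstacle to be the first step. \thex\ref{theormoninmonoidalgroth} assumes that $P$ is literally a monoidal Grothendieck construction, whereas \cref{theorEismonoidalopfib} only gives $E$ up to equivalence over $\SLat$, with strictness of $E$ arranged by a choice of tensor product. To handle this, I would check that the equivalence of \cref{thmtotalccliff} can be upgraded to a strong monoidal equivalence commuting strictly with the projections to $\SLat$. It is for this reason that the conclusion is an equivalence, and a Street rather than Grothendieck opfibration.
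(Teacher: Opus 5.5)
Your proposal is correct and follows essentially the same route as the paper. You apply Theorem~\ref{theormoninmonoidalgroth} to $\CInvMon_{(-)}^*$, transport along the equivalence of Theorem~\ref{thmtotalccliff} using that $\Monf(-)$ is a 2-functor, and identify $\Monf(\CInvMon)$ and $\Monf(\SLat)$ with inverse and idempotent semirings. Your points that this equivalence must be strong monoidal and compatible with the projections, and your argument for the Street opfibration clause, spell out details that the paper's short proof leaves implicit.
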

\begin{proof}
    We apply the general \thex\ref{theormoninmonoidalgroth} to the weakly lax monoidal pseudofunctor $\CInvMon_{(-)}^*\:\SLat\to \Cat$, whose monoidal Grothendieck construction gives a monoidal opfibration equivalent to $E\:\CInvMon\to \SLat$.
    We obtain that
    $$\Monf(\CInvMon)\simeq \Monf(\Grothdiag{\CInvMon_{(-)}^*})\simeq \Grothdiag{(\Monf(-)\c \widehat{\CInvMon_{(-)}^*})}$$
    over $\Monf(\SLat)$.
    But $\Monf(\CInvMon)$ is just the category of inverse semirings and $\Monf(\SLat)$ is the category of idempotent semirings.
\end{proof}

Finally, combining this with \cref{prop:lax_monoidal_monoid_wrt_Day} we obtain the following explicit correspondence.
\begin{theorem}\label{thm:inverse_semiring_structure_theorem}
    Inverse semirings precisely correspond to idempotent semirings $Q$ equipped with a lax monoidal functor $Q\to \Ab$.
    
    Inverse semiring homomorphisms correspond to pairs $(f,\alpha)$ where $f$ is a semiring homomorphism between the respective idempotent semirings $Q$ and $Q'$ and $\alpha$ is a natural transformation living in $\HomC{\Cat}{Q}{\Ab}$.

    Given such a functor, the associated inverse semiring consists of elements $(e,g)$ with $g \in F(e)$. The addition is given by $(e_1,g_1) + (e_2,g_2) = (e_1 \vee e_2, F(e_1 \le e_1 \vee e_2)(g_1)+F(e_2 \le e_1 \vee e_2)(g_2))$ with additive inverse $(0,0)$.
    
    Let $\mu_{X,Y}\colon F(X) \otimes F(Y) \to F(X \otimes Y)$ be the associated `multiplication' structure 2-cell of $F$. Then the multiplication is given by $(e_1,g_1)(e_2,g_2) = (e_1e_2,\mu(g_1 \otimes g_2))$ and multiplicative identity given by $(1,\iota(1))$, where in the first component $1$ is the multiplicative unit of $Q$ and in the second, it is the well-known integer.
\end{theorem}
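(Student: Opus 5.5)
The plan is to combine the preceding theorem, which exhibits inverse semirings as the Grothendieck construction of $\Monf(-)\c\widehat{\CInvMon_{(-)}^*}$ over $\Monf(\SLat)$, with \cref{thm:DayConvolutionFibre} and \cref{prop:lax_monoidal_monoid_wrt_Day}. Throughout we use the join-semilattice convention, so that an idempotent semiring is a monoid $(Q,m,e)$ in $(\SLat,\otimes,\mathbbm{2})$.

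\textbf{Objects.} By the preceding theorem, an inverse semiring corresponds to an idempotent semiring $Q$ together with a monoid in $\HomC{\Cat}{Q}{\Ab}$ for the structure of \consx\ref{consinducemonstr}. By \cref{thm:DayConvolutionFibre} this structure is Day convolution. Since $Q$ is small and $\Ab$ is cocomplete and closed, \cref{prop:lax_monoidal_monoid_wrt_Day} identifies such monoids with lax monoidal functors $F\colon Q\to\Ab$, where $Q$ carries its multiplicative (strict) monoidal structure $mt$.

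\textbf{Morphisms.} A morphism in the Grothendieck construction is a pair $(f,k)$. Here $f$ is a monoid map in $\SLat$, i.e.\ a semiring homomorphism $Q\to Q'$. The second component is a monoid map $k\colon \Lan{f}F\to F'$ in $\HomC{\Cat}{Q'}{\Ab}$. By the adjunction $\Lan{f}\dashv f^*$, this transposes to a natural transformation $\alpha\colon F\to F'\circ f$ in $\HomC{\Cat}{Q}{\Ab}$. One must then check that the Day-monoid-morphism condition on $k$ transposes to monoidality of $\alpha$. This is a routine transpose-and-paste argument using that $\Lan{f}$ is strong monoidal for Day convolution, which is its $\widehat{F}(f)$ structure.

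\textbf{Explicit formulas.} We unwind the local Grothendieck construction of \cref{cor:commutative_clifford_functor}, now in the additive and join convention.
\begin{itemize}
\item Elements are pairs $(e,g)$ with $g\in F(e)$.
\item Addition is the global tensor of \cref{propcartmon}, transported to joins: $(e_1,g_1)+(e_2,g_2)=(e_1\vee e_2,\ F(e_1\le e_1\vee e_2)(g_1)+F(e_2\le e_1\vee e_2)(g_2))$.
\item The additive identity is $(0,0)$.
\end{itemize}
For the multiplication, we follow \cref{theormoninmonoidalgroth}: the monoid structure $\mu$ on $X$ is induced from the global tensor $\phi_{Q,Q}(F,F)=\Lan{t}(\otimes_\Ab\c(F\times F))$ followed by $\Lan{m}$. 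On elements, $(e_1,g_1)\otimes(e_2,g_2)$ lives over $e_1\otimes e_2\in Q\otimes Q$ with group component $g_1\otimes g_2$. Pushing forward along $m$ sends it over $e_1e_2$, and applying $\mu^\sharp$ from \cref{prop:lax_monoidal_monoid_wrt_Day} gives $(e_1e_2,\mu(g_1\otimes g_2))$. The unit arises from $\iota^\sharp\colon\Z\to F(1)$, giving $(1,\iota(1))$.

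\textbf{Main obstacle.} The hard step is this element-level tracing. We must confirm that the opcartesian lift along $m$, computed as a pushout in \cref{cor:E_fibration}, really corresponds to $\Lan{m}$ under the equivalence of \cref{thmtotalccliff}, and that the transpose $\mu^\sharp$ matches the evaluation just described. I would first check this on generators $(e_1,g_1)\otimes(e_2,g_2)$ of $A\otimes A$, using the presentation of the tensor product, and then extend by bilinearity.
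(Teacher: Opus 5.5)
Your proposal is correct and follows the paper's own route. The paper obtains this theorem exactly by combining the preceding Grothendieck-construction theorem for inverse semirings with \cref{thm:DayConvolutionFibre} and \cref{prop:lax_monoidal_monoid_wrt_Day}, and it does not spell out the morphism correspondence or the element-level formulas. Your extra steps make those parts explicit: the transposition of $k\colon \Lan{f}F\to F'$ to $\alpha\colon F\to F'\circ f$ (which must indeed be \emph{monoidal}, a point the statement leaves implicit), and the tracing of the multiplication through $\Lan{m}\Lan{t}\cong\Lan{mt}$. You are also right to read the statement's ``additive inverse $(0,0)$'' as the additive identity.
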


\begin{remark}
 Here we have come full circle: considering only the multiplicative structure, this is now again an example of the `total face' of the monoidal Grothendieck construction.
 (On the additive part of this structure, this is simply the correspondence for commutative Clifford monoids and is hence obtained from the `fibrewise face' of the monoidal Grothendieck construction.)
 Thus, in order to move from the lax monoidal functor $F$ to its associated inverse semiring, one must combine the two faces of the monoidal Grothendieck construction.

This correspondence can likely be generalised to an as yet undefined notion of \emph{Janusian} Grothendieck construction. In particular, one could consider a lax monoidal functor $F\colon \C \to \SymMonCat$ where $\C$ is a rig category whose additive structure is cocartesian. Then applying this Grothendieck construction would yield an opfibration whose domain is a rig category.
\end{remark}

\begin{remark}
 Construction 3.5 of \cite{faul2026} arises as the special case of \cref{thm:inverse_semiring_structure_theorem} where $Q$ is the distributive lattice $\mathbbm{2}$.
\end{remark}

\bibliographystyle{abbrv}
\bibliography{bibliography}

\end{document}